\newcommand{\Cof}{\operatorname{Cof}}
\renewcommand{\leq}{\leqslant}
\renewcommand{\geq}{\geqslant}
\numberwithin{equation}{section}
\newtheorem{Theorem}{Theorem}[section]
\newtheorem{Proposition}[Theorem]{Proposition}
\newtheorem{Lemma}[Theorem]{Lemma}
\newtheorem{Remark}[Theorem]{Remark}
\title{Local null controllability of a fluid-rigid body interaction problem with Navier slip boundary conditions} 
\author[1]{Imene Aicha Djebour}
\affil[1]{Universit\'e de Lorraine, CNRS, Inria, IECL, F-54000 Nancy}
\date{\today}
\pgfplotsset{width=7cm,compat=1.8}
\begin{document}
	\maketitle	
\begin{abstract}
The aim of this work is to show the local null controllability of a fluid-solid interaction system by using a distributed control located in the fluid. 
The fluid is modeled by the incompressible Navier-Stokes system with Navier slip boundary conditions and the rigid body is governed by the Newton laws.
Our main result yields that we can drive the velocities of the fluid and of the structure to 0 and we can control exactly the position of the rigid body.
One important ingredient consists in a new Carleman estimate for a linear fluid-rigid body system with Navier boundary conditions. This work is done without imposing any geometrical conditions on the rigid body. 
\end{abstract} 
	
	\vspace{1cm}
	
	\noindent {\bf Keywords:} Navier-Stokes system, Navier slip boundary conditions, Null controllability, fluid-solid interaction system.
	
	\noindent {\bf 2010 Mathematics Subject Classification.} 35Q30, 93C20, 93B05.

	\tableofcontents

\section{Introduction}
Let $\Omega$ be a bounded, non empty open subset of $\mathbb{R}^2$ with a regular boundary. We assume that $\Omega$ contains a rigid body and  an incompressible viscous fluid. 
At each time $t>0$, the domain of the rigid body is denoted by $\mathcal{S}(t)\subset \Omega$ that is assumed to be compact with non empty interior and regular. The fluid domain is denoted by $\mathcal{F}(t)=\Omega\backslash \mathcal{S}(t)$, and is assumed to be connected.

We consider the following system describing the evolution of the fluid which is governed by the incompressible Navier-Stokes system
\begin{equation}
\label{NSE}
\left\{
\begin{array}{rl}
\partial_t U+(U\cdot \nabla)U-\nabla \cdot \mathbb{T}(U,P)=v^*1_{\mathcal{O}}  & \text { in }(0,T)\times \mathcal{F}(t),\\
\nabla \cdot U=0& \text { in }(0,T)\times \mathcal{F}(t).\\
\end{array}
\right.
\end{equation}
In the above system, we have denoted by $U$ the fluid velocity, $P$ the fluid pressure and by $v^*$ the control acting on the system through $\mathcal{O}\subset \mathbb{R}^2$, where $\mathcal{O}$ is a non empty open subset such that $\overline{\mathcal{O}}\subset \mathcal{F}(t)$.

The Cauchy stress tensor $\mathbb{T}(U,P)$ is defined by
$$ \mathbb{T}(U,P)=-P I_2 +2 \nu D(U),\quad D(U)_{i,j}=\frac{1}{2}\left( \frac{\partial  U_i}{\partial x_j}+\frac{\partial  U_j}{\partial x_i}\right),$$
where $\nu$ is the viscosity of the fluid. We denote for each time $t$, the position of the structure by $h(t)\in \mathbb{R}^2$ and by $R_{\theta(t)}$ the rotation matrix of angle $\theta$ of the solid defined by
$$
R_{\theta(t)}=
\begin{pmatrix}
\cos \theta(t) & -\sin\theta(t)\\
\sin\theta(t) & \cos\theta(t)
\end{pmatrix}.
$$
Then, the flow of the structure is given by $X_\mathcal{S}(t,\cdot):\mathcal{S}\longrightarrow \mathcal{S}(t)$ where
\begin{equation}
\label{strucflow}
X_\mathcal{S}(t,y)= h(t)+R_{\theta(t)}y,\quad t\in(0,T),\quad y\in \mathcal{S},
\end{equation}
where $\mathcal{S}$ is a fixed subset of $\mathbb{R}^2$, non empty, compact with a regular boundary.

We notice that $X_\mathcal{S}(t,\cdot)$ is invertible and a $C^\infty$-diffeomorphism, we denote its inverse by $Y_\mathcal{S}(t,\cdot):\mathcal{S}(t)\longrightarrow \mathcal{S}$ where
\begin{equation*}
Y_\mathcal{S}(t,x)= R_{\theta(t)}^{-1}(x-h(t)),\quad t\in(0,T),\quad x\in \mathcal{S}(t).
\end{equation*}
Thus, the Eulerian velocity of the structure is given by
$$
U_\mathcal{S}(t,x)=h'(t)+R_{\theta(t)}'R_{\theta(t)}^{-1}(x-h(t)),\quad t\in(0,T),\quad x\in \mathcal{S}(t).
$$
We denote by $a^\perp$, the vector $\begin{pmatrix}
-a_2\\a_1
\end{pmatrix} $, for any $a=\begin{pmatrix}
a_1\\a_2
\end{pmatrix}\in \mathbb{R}^2 $.
We notice that $R'_{\theta(t)}R_{\theta(t)}^{-1}$ is a skew-symmetric matrix, then the Eulerian velocity of the structure writes 
$$
U_\mathcal{S}(t,x)=h'(t)+\omega(t)(x-h(t))^\perp,
$$
where $\omega(t)=\theta'(t)$ represents the angular velocity of the rigid body.

We denote by $\mathcal{S}_{h,\theta}$ the set 
$$
\mathcal{S}_{h,\theta}=h+R_\theta \mathcal{S},
$$
and we define the corresponding fluid domain
$$
\mathcal{F}_{h,\theta}=\Omega\backslash \mathcal{S}_{h,\theta},
$$
for any $h\in\mathbb{R}^2$, $\theta \in \mathbb{R}$. Then, with these notations, we have
$$
\mathcal{S}(t)=\mathcal{S}_{h(t),\theta(t)},\quad \mathcal{F}(t)=\mathcal{F}_{h(t),\theta(t)}.
$$
We point out that the fluid domain is depending on the displacement of the solid structure, consequently, it depends on time.
 
We denote by $\widehat{n}$ the outward unit normal to $\partial\mathcal{F}(t)$, where $\partial\mathcal{F}(t)=\partial\Omega\cup \partial \mathcal{S}(t)$. 

The motion of the structure is governed by the balance equations for linear and angular momenta
\begin{equation}
\label{RBE}
\left\{
\begin{array}{rl}
mh''(t)=-\int_{\partial \mathcal{S}(t)}\mathbb{T}(U,P)\widehat{n} \ d\Gamma  & t\in(0,T),\\
J\omega'(t)=-\int_{\partial \mathcal{S}(t)}(x-h(t))^\perp\cdot\mathbb{T}(U,P)\widehat{n} \ d\Gamma& t\in(0,T).\\
\end{array}
\right.
\end{equation}

We complete \eqref{NSE} and \eqref{RBE} by the Navier slip boundary conditions. 
In order to write these boundary conditions, we need to introduce some notations. 
Let $\widehat{\tau}$ be a tangent vector to $\partial\mathcal{F}(t)$. We denote by $a_{\widehat{n}}$ and $a_{\widehat{\tau}}$ the normal and the tangential parts of $a\in \mathbb{R}^2$:
\begin{equation*}
\label{tak0.1}
a_{\widehat{n}}= (a\cdot \widehat{n})\widehat{n}, \quad a_{\widehat{\tau}} =a-a_{\widehat{n}}.
\end{equation*}
Then, the boundary conditions write as follows
\begin{equation}
\label{NBC}
\left\{
\begin{array}{rl}
U_{\widehat{n}}= 0  &  \text { on }(0,T)\times\partial\Omega,\\
\left[ 2\nu D(U) \widehat{n}+\beta_{\Omega} U\right]_{\widehat{\tau}}=0&  \text { on }(0,T)\times\partial\Omega,\\
(U-U_\mathcal{S})_{\widehat{n}}=0&   \text { on }(0,T)\times\partial \mathcal{S}(t),\\
\left[ 2\nu D(U) \widehat{n} + \beta_{\mathcal{S}} \left(U-U_\mathcal{S} \right)\right]_{\widehat{\tau}}=0 &  \text { on }(0,T)\times\partial \mathcal{S}(t),
\end{array}
\right.
\end{equation}
where $\beta_\Omega\geq 0$ and $\beta_{\mathcal{S}}\geq 0$ are the friction coefficients.

Let $h^0,\,\widetilde{\ell^0}\in\mathbb{R}^2$, $\theta^0,\;\omega^0\in \mathbb{R}$ and $u^0\in [H^1(\mathcal{F}_{h^0,\theta^0})]^2$. We furnish the following initial conditions
\begin{equation}
\label{IC}
U(0,x) = u^0(x),\; x\in \mathcal{F}_{h^0,\theta^0}, \quad h'(0)=\widetilde{\ell^0},\quad \omega(0)=\omega^0,\quad
h(0)=h^0,\quad \theta(0)=\theta^0,
\end{equation} 
such that the following compatibility conditions are satisfied
\begin{equation}
\label{CC}
\left\{
\begin{array}{rl}
\nabla \cdot u^0=0&\;\text { in }\mathcal{F}_{h^0,\theta^0}, \\
 u^0_{\widehat{n}}=0& \text { on }\partial\Omega,\\  
\left( u^0-u^0_\mathcal{S}\right) _{\widehat{n}}=0& \text{ on } \partial \mathcal{S}_{h^0,\theta^0},
\end{array}
\right.
\end{equation}
where $u^0_\mathcal{S}(x)=\widetilde{\ell^0}+\omega^0(x-h^0)^\perp$.
Without loss of generality, we assume that the center of gravity of $\mathcal{S}$ is at the origin. Then, $h(t)$ will be the position of the center of mass of the rigid body $\mathcal{S}(t)$.

Our main objective in this paper is to look for a control $v^*$ acting on $\mathcal{O}$ such that for any $(h_T,\theta_T)\in\mathbb{R}^2\times \mathbb{R}$
with
\begin{equation}
\label{non-collisiont}
\mathcal{S}_{h_T,\theta_T}\subset\Omega\; \backslash \;\overline{\mathcal{O}},
\end{equation}
we get that $h(T)=h_T$, $\theta(T)=\theta_T$ and the velocities of the fluid and of the rigid body are equal to $0$ at time $T$.

The main result of this paper is stated below:
\begin{Theorem}
\label{tp}
Assume that $\beta_\mathcal{S}>0$ and let $(h_T,\theta_T)$ that satisfies \eqref{non-collisiont}. Then, there exists $\varepsilon>0$ such that for any $(u^0,h^0,\widetilde{\ell^0},\omega^0,\theta^0)$ that satisfies \eqref{CC} and
\begin{equation*}
\label{petitesse}
\left\|u^0\right\|_{[H^1(\mathcal{F}_{h^0,\theta^0})]^2}+\left|h^0-h_T\right|+\left|\widetilde{\ell^0}\right|+\left|\omega^0\right|+\left|\theta^0-\theta_T\right|\leq \varepsilon,
\end{equation*}
there exists a control $v^*\in L^2(0,T;[L^2(\mathcal{O})]^2)$ such that 
$$
U(T,\cdot)=0\text{ in }\mathcal{F}_{h_T,\theta_T},\quad h(T)=h_T,\quad h'(T)=0,\quad \omega(T)=0,\quad \theta(T)=\theta_T.
$$
\end{Theorem}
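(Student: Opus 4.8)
The proof follows the by-now standard route for local controllability of fluid–structure systems — reduction to a fixed domain, linearization, a Carleman/observability estimate for the adjoint linear system, null controllability of the linear system in weighted spaces, and a fixed point argument — the novelty being that every step must be carried out under the Navier slip conditions \eqref{NBC} and with no shape hypothesis on $\mathcal{S}$; the sign condition $\beta_\mathcal{S}>0$ will be used in an essential way. \emph{Step 1 (reduction to a fixed domain).} Since $\mathcal{F}(t)=\mathcal{F}_{h(t),\theta(t)}$ is itself an unknown, I would first transport \eqref{NSE}--\eqref{IC} onto the fixed domain $\mathcal{F}_{h_T,\theta_T}$. By \eqref{non-collisiont} and the smallness of the data (and since the control will keep the motion small), $\mathcal{S}(t)$ stays in a small neighborhood of $\mathcal{S}_{h_T,\theta_T}$ and $\overline{\mathcal{O}}\subset\mathcal{F}(t)$ for all $t\in[0,T]$. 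One then builds a time-dependent diffeomorphism $\mathcal{A}(t,\cdot)\colon\mathcal{F}_{h_T,\theta_T}\to\mathcal{F}(t)$ coinciding with the rigid displacement $y\mapsto h(t)+R_{\theta(t)-\theta_T}(y-h_T)$ near $\mathcal{S}_{h_T,\theta_T}$, equal to the identity near $\partial\Omega$ and on $\mathcal{O}$, and depending smoothly on $(h(t)-h_T,\theta(t)-\theta_T)$. Changing variables $(U,P)\mapsto(u,p)$ and $(h',\omega)\mapsto(\ell,\omega)$ accordingly yields a Navier–Stokes/Newton system on the \emph{fixed} geometry with unchanged viscous principal part and Navier slip conditions (same $\beta_\Omega$, $\beta_\mathcal{S}$), where the control still enters through $1_{\mathcal{O}}$, and where the convective term, the geometric corrections from $\mathcal{A}$, and the corrections in the boundary conditions are collected into a remainder that is at least quadratic in $(u,\nabla u,\ell,\omega,h-h_T,\theta-\theta_T)$.

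\emph{Step 2 (the linearized system and its null controllability).} Linearizing about the target state $(u,p,\ell,\omega,h,\theta)=(0,0,0,0,h_T,\theta_T)$ and treating the remainder as a prescribed source $f$, I am led to the linear control problem: given $f$ (in a suitable weighted $L^2$ space decaying as $t\to T$) and initial data, find $v\in L^2(0,T;[L^2(\mathcal{O})]^2)$ such that the solution of the Stokes system with the boundary conditions \eqref{NBC}, coupled to the linearized rigid-body equations $m\ell'=-\int_{\partial\mathcal{S}_{h_T,\theta_T}}\mathbb{T}(u,p)\widehat n\,d\Gamma$, $J\omega'=-\int_{\partial\mathcal{S}_{h_T,\theta_T}}(x-h_T)^{\perp}\cdot\mathbb{T}(u,p)\widehat n\,d\Gamma$, $h'=\ell$, $\theta'=\omega$, satisfies $u(T,\cdot)=0$, $\ell(T)=\omega(T)=0$, $h(T)=h_T$ and $\theta(T)=\theta_T$. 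The heart of the matter is the observability inequality for the adjoint system (a backward Stokes system with Navier-type conditions coupled to ODEs for the adjoint velocities and positions); this is precisely where the new Carleman estimate for the linear fluid–rigid body system with Navier boundary conditions — announced in the abstract and established later in the paper — is invoked. From it one deduces, by the Fursikov–Imanuvilov duality method, null controllability of the linearized system with $v$ and the controlled trajectory lying in appropriate weighted spaces.

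\emph{Step 3 (finite-dimensional constraints and the fixed point).} The terminal conditions $h(T)=h_T$, $\theta(T)=\theta_T$ are three scalar constraints on top of the null controllability of the velocities; since $h'=\ell$, $\theta'=\omega$ with $\ell,\omega$ already steerable, the end-point map onto $(h(T),\theta(T))$ is onto, so these extra constraints are accommodated in the usual way, by including the corresponding finite-dimensional projections in the observability estimate. Finally one sets up a fixed point: the map sending a small trajectory $(\bar u,\bar p,\bar\ell,\bar\omega,\bar h,\bar\theta)$ in the weighted space to the controlled solution of the linear problem with source $f=F(\bar u,\bar p,\bar\ell,\bar\omega,\bar h,\bar\theta)$. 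The weights are chosen so that $F(\cdot)$ lands in the admissible source space, and, $F$ being quadratic, the map is a contraction (or, via compactness, a Schauder map) on a small enough ball. Its fixed point solves the nonlinear controllability problem for the transported system with small data; undoing the change of variables of Step 1 gives Theorem \ref{tp}.

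\emph{Main obstacle.} The decisive step is the Carleman estimate of Step 2: under Navier slip conditions the boundary traces of $U$ do not vanish, so the trace and commutator terms produced by the weight must be absorbed using the structure of $D(U)$ and the friction terms, and — with no geometric assumption on $\mathcal{S}$ — one must still recover observation of the angular velocity $\omega$, which is exactly what $\beta_\mathcal{S}>0$ provides. The remaining difficulties (limited time regularity of $\mathcal{A}$, the pressure estimate in the weighted setting, and checking that the quadratic remainder $F$ fits the weighted source space) are technical but routine.
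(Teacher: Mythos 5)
Your proposal is correct and follows essentially the same strategy as the paper: transport the problem to the fixed target geometry by a diffeomorphism that is rigid near $\mathcal{S}$ and the identity near $\partial\Omega$ and $\mathcal{O}$, prove null controllability of the linearized Stokes–rigid body system (including the position constraint, via an observability inequality for the adjoint augmented by the finite-dimensional adjoint variables, obtained from the new Carleman estimate plus a compactness–uniqueness/unique-continuation argument where $\beta_\mathcal{S}>0$ is used), and conclude by a contraction fixed point on the quadratic remainder in weighted spaces. The only place you are vaguer than the paper is the treatment of the terminal conditions $h(T)=h_T$, $\theta(T)=\theta_T$, which the paper establishes precisely through the inequality bounding $|\gamma^2|^2$ by the observation, proved by contradiction using unique continuation for the Stokes system and the slip condition with $\beta_\mathcal{S}>0$.
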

 Without loss of generality, we can always assume that
$$
h_T=0, \quad \theta_T=0, \quad\text{and thus}\quad \mathcal{S}_{h_T,\theta_T}=\mathcal{S}, \quad \mathcal{F}_{h_T,\theta_T}=\mathcal{F}.
$$	
In fact: in general, we have 
\begin{equation*}
X_\mathcal{S}(t,y)= h(t)+R_{\theta(t)-\theta_T}(y-h_T),\quad t\in(0,T),\quad y\in \mathcal{S}.
\end{equation*}
and in this case, we set
$$
\mathcal{S}_{h,\theta}=h+R_{\theta-\theta_T} (\mathcal{S}-h_T),
$$
then, $\mathcal{S}_{h_T,\theta_T}=\mathcal{S}$.
Let take $z=R_{-\theta_T}(y-h_T)$, hence
\begin{equation*}
X_\mathcal{S}(t,z)= h(t)+R_{\theta(t)}z,\quad t\in(0,T),\quad z\in\mathcal{S}_{0,0} ,
\end{equation*}
where
$$
\mathcal{S}_{0,0}=R_{-\theta_T}(\mathcal{S}-h_T).
$$
Then, we notice that we are reduced to the case \eqref{strucflow}. 
Thus, by translation of vector $-h_T$ and rotation of angle $-\theta_T$, one can reduce the controllability problem to the case $h_T=0$ and $\theta_T=0$. 
In what follows, the vectors $n$ and $\tau$ stand respectively for the outer unit normal and the unit tangent vector to $\partial\mathcal{F}$.

Several works were devoted to the study of fluid-rigid body interaction systems, in particular, when the fluid is governed  by the  Navier-Stokes system. Existence results concerning this kind of systems with Dirichlet boundary conditions were considered in \cite{MR1759801}, \cite{MR1682663}, \cite{MR1765138},\cite{MR2029294}, \cite{MR2027754}, \cite{MR1763528}, \cite{MR1781915}, \cite{MR1870954} etc.
For the case of the Navier slip boundary conditions \eqref{NBC}, the existence of weak solutions is proved in \cite{MR3272367} and the existence of strong solutions is obtained in \cite{MR3266142}.
In \cite{MR3266142} and \cite{MR3281946}, the authors proved that collisions can occur in final time between the rigid body and the domain cavity with some assumptions on the solid geometry.

Concerning the controllability, let us mention \cite{MR2103189} and \cite{MR1804497}, where the authors obtained the local exact controllability of the 2D or 3D Navier-Stokes equations with Dirichlet boundary conditions considering distributed controls. The local exact controllability of the Navier Stokes system with nonlinear Navier boundary conditions with distributed controls was studied in \cite{MR2224824}. Moreover, in \cite{MR3784805}, the authors established the local controllability with $N-1$ scalar controls. 
 With Navier-slip conditions on the fluid equations, global null controllability is obtained for the weak solution in \cite{MR4081730} such that the controls are only located on a small part of the domain boundary.
Concerning controllability results of fluid-structure systems with Dirichlet boundary conditions, in dimension 2, we mention the paper \cite{MO}, where the authors proved the null controllability in velocity and the exact controllability for the position of the rigid body assuming some geometric properties for the solid and  provided that the initial conditions are small enough, more precisely a condition of smallness on the $H^3$ norm of the initial fluid velocity is needed. The authors used the Kakutani's fixed point theorem to deduce the null controllability of the nonlinear system. We have also the paper \cite{MR2317341} where the authors considered the structure of a rigid ball, their result relies on semigroup theory. In the latest paper, only an assumption on the $H^1$ norm of the initial fluid velocity is needed. In dimension 3, we mention \cite{MG}, the same result was proved without any assumptions on the solid geometry while a condition of smallness on the $H^2$ norm of the initial fluid velocity is needed. We also mention \cite{royT}, where the authors considered the interaction between a viscous and incompressible fluid modeled by the Boussinesq system and a rigid body with arbitrary shape, they proved null controllability of the associated system.
In the case of the stabilization of fluid-solid ineraction systems, we have \cite{MR3181675}, \cite{MR3261920}.

In this paper, we prove the local null controllability of the system  \eqref{NSE}, \eqref{RBE}, \eqref{NBC}, \eqref{IC}, that is the case of the Navier slip boundary conditions  in the presence of a rigid structure of arbitrary shape. We follow the same method as \cite{MR2317341}: we use a change of variables to write our system in a fixed domain and use a fixed point argument to reduce our problem to the null controllability of a linear fluid-rigid body system, that is coupling the Stokes system with ODE for the structure velocity. To do this we derive a Carleman estimates for the corresponding system.

One of the main difficulties to obtain such an estimate is to manage the boundary conditions and more precisely to obtain estimates of the rigid velocity with the good weights. An important step for this calculation is a Carleman estimates for the Laplacian equation with divergence free condition and Navier slip boundary conditions, which is given in section \ref{cestim}.
We emphasize that this is the first result concerning the null controllability of a fluid-structure interaction system with boundary conditions different from the standard no-slip ones. Note that with the Navier boundary conditions considered here, one of the additional difficulties with respect to the Dirichlet boundary conditions lies on the fact that in the Carleman estimate, it is more complictaed to estimate the structure velocities from the fluid velocity. There are several possible extensions to this work. First let us recall that in  \cite{MR4081730}, the authors obtain the global
 exact controllability of the Navier-Stokes system with Navier boundary conditions. One of their ingredients is to use the local exact null controllability of \cite{MR2224824}. Here, one can also consider the global exact controllability but the arguments of \cite{MR4081730} may be difficult to adapt due to the presence of the structure.
Second, one can also consider a heat conducting fluid and remplace the Navier-Stokes system by the Boussinesq system. This has been done for instance in 
\cite{royT} with a rigid body and Dirichlet boundary conditions. Our method here should be adapted to this case and we would obtain a similar result.
Finally, one can try to reduce the number of controls as it is done in \cite{MR3784805} for the Navier-Stokes system with Navier boundary conditions. However, let us note that due to the presence of the structure velocities in the boundary conditions, some parts of the proof in \cite{MR3784805}, might be difficult to adapt, mainly the manipulation of the curl of the fluid velocity on the boundary.

The outline of this paper is as follows:
in section \ref{notation}, we give some preliminaries. We emphasize that one of the main difficulties in this problem is that we are dealing with a coupled system set on a non cylindrical domain. Then, in section \ref{chg}, we remap the problem into an equivalent system given in a fixed geometry. In section \ref{carlm}, we establish a new carleman inequality. In section \ref{contrl}, we prove the null controllability of the linearized system. Finally, in section \ref{fip}, we prove Theorem \ref{tp} and deduce the null controllability of the system by applying a fixed-point argument.

\section{Preliminaries}
\label{notation}
In this section, we prove some regularity results of an associated linearized problem. 
We consider the following linear system 
\begin{equation}
\label{Sl1}
\left\{
\begin{array}{rl}
\partial_t \overline{w}-\nabla \cdot \mathbb{T}(\overline{w},\overline{\pi})=F_1  & \text { in }(0,T)\times\mathcal{F},\\
\nabla \cdot \overline{w}=0& \text { in }(0,T)\times\mathcal{F},\\
m\ell_{\overline{w}}'(t)=-\int_{\partial \mathcal{S}}\mathbb{T}( \overline{w}, \overline{\pi})n\ d\Gamma+F_2  & t\in(0,T),\\
J(k_{\overline{w}}'(t))=-\int_{\partial \mathcal{S}}y^\perp\cdot\mathbb{T}( \overline{w}, \overline{\pi})n \ d\Gamma+F_3& t\in(0,T),\\
\end{array}
\right.
\end{equation}
with the boundary conditions
\begin{equation}
\label{SNBCl1}
\left\{
\begin{array}{rl}
\overline{w}_n = 0  &  \text { on }(0,T)\times\partial\Omega,\\
\left[ 2\nu D( \overline{w}) n+\beta_{\Omega}  \overline{w}\right]_{\tau}=0&  \text { on }(0,T)\times\partial\Omega,\\
( \overline{w}- \overline{w}_\mathcal{S})_n=0&   \text { on }(0,T)\times\partial \mathcal{S},\\
\left[ 2\nu D( \overline{w}) n + \beta_{\mathcal{S}} ( \overline{w}- \overline{w}_\mathcal{S})\right]_{\tau}=0 &  \text { on }(0,T)\times\partial \mathcal{S},
\end{array}
\right.
\end{equation}
where $ \overline{w}_\mathcal{S}(y)=\ell_{\overline{w}}+k_{\overline{w}} y^\perp$, completed with the initial conditions
\begin{equation}
\label{SIl1}
\overline{w}(0,\cdot) = w^0,\; \text{ in }\mathcal{F},\quad
\ell_{\overline{w}}(0)=\ell_w^0,\quad k_{\overline{w}}(0)=k_w^0.
\end{equation}
We have the following regularity result for the system \eqref{Sl1}, \eqref{SNBCl1} and \eqref{SIl1} which is proved in \cite{MR3266142}.
\begin{Theorem}
\label{regu}
Let $T>0$. Suppose that $F_1\in L^2(0,T;[L^2(\mathcal{F})]^2)$, $F_2\in [L^2(0,T)]^2$ and $F_3\in L^2(0,T)$ are given functions and $ w^0\in[H^1(\mathcal{F})]^2$ such that
\begin{equation*}
\nabla\cdot w^0=0, \;  \text{ in }\mathcal{F},\quad
w^0_n = 0,  \; \text{ on }\partial\Omega,\quad
w^0_n(y)=(\ell_w^0+k_w^0 y^\perp)_n,  \; y\in \partial \mathcal{S}.
\end{equation*} Then, there exists a unique solution to problem \eqref{Sl1}, \eqref{SNBCl1} and \eqref{SIl1} such that
\begin{equation*}
\overline{w} \in L^2(0,T;[H^2(\mathcal{F})]^2)\cap C([0,T];[H^1(\mathcal{F})]^2 )\cap H^1(0,T;[L^2(\mathcal{F})]^2),\quad  \overline{\pi} \in L^2(0,T;H^1(\mathcal{F}) \slash \mathbb{R}),
\end{equation*}
$$
(\ell_{\overline{w}},k_{\overline{w}})\in [H^1(0,T)]^2\times H^1(0,T).
$$
Moreover, it satisfies the following estimate
\begin{multline}
\label{reg1}
\left\| \overline{w}\right\|_{L^2(0,T;[H^2(\mathcal{F})]^2)\cap C([0,T];[H^1(\mathcal{F})]^2)\cap H^1(0,T;[L^2(\mathcal{F})]^2)}+ \left\| \overline{\pi}\right\|_{L^2(0,T;H^1(\mathcal{F}) \slash \mathbb{R})}+\left\|\ell_{\overline{w}}\right\|_{[H^1(0,T)]^2}+\left\|k_{\overline{w}}\right\|_{ H^1(0,T)} \\\leq C\left( \left\|F_1\right\|_{L^2(0,T;[L^2(\mathcal{F})]^2)}+\left\|F_2\right\|_{[L^2(0,T)]^2}+\left\|F_3\right\|_{L^2(0,T)}+\left\| w^0\right\|_{[H^1(\mathcal{F})]^2}+\left|\ell^0_w \right|+\left|k_w^0\right|  \right). 
\end{multline}
\end{Theorem}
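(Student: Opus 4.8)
The plan is to recast the coupled system \eqref{Sl1}--\eqref{SIl1} as an abstract parabolic Cauchy problem driven by a self-adjoint, bounded-below operator, and then to apply maximal $L^2$-regularity for analytic semigroups on a Hilbert space.

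\textbf{Functional setting.} I would first introduce the Hilbert space
\[
\mathcal{H} = \left\{ (u,\ell,k) \in [L^2(\mathcal{F})]^2\times\mathbb{R}^2\times\mathbb{R} \ :\ \nabla\cdot u = 0 \text{ in } \mathcal{F},\ u_n = 0 \text{ on } \partial\Omega,\ u_n = (\ell + ky^\perp)_n \text{ on } \partial\mathcal{S}\right\},
\]
with inner product $\langle (u,\ell,k),(v,p,q)\rangle = \int_\mathcal{F} u\cdot v\,dy + m\,\ell\cdot p + J\,kq$, and the form space $\mathcal{V}$ obtained by requiring in addition $u\in[H^1(\mathcal{F})]^2$; the density of $\mathcal{V}$ in $\mathcal{H}$ is a standard fact about solenoidal fields with these trace constraints. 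On $\mathcal{V}$ I would consider the symmetric bilinear form
\[
a\big((u,\ell,k),(v,p,q)\big) = 2\nu\int_\mathcal{F} D(u):D(v)\,dy + \beta_\Omega\int_{\partial\Omega} u_\tau\cdot v_\tau\,d\Gamma + \beta_\mathcal{S}\int_{\partial\mathcal{S}} (u-u_\mathcal{S})_\tau\cdot(v-v_\mathcal{S})_\tau\,d\Gamma ,
\]
with $u_\mathcal{S} = \ell + ky^\perp$; this is exactly the form obtained by testing $-\nabla\cdot\mathbb{T}(u,\pi)$ against a field of the same structure, integrating by parts and using \eqref{SNBCl1} to eliminate the boundary terms. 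Continuity of $a$ on $\mathcal{V}$ is immediate, and coercivity up to a shift, $a(W,W)+\lambda\|W\|_\mathcal{H}^2\geq c\|W\|_\mathcal{V}^2$ for $\lambda$ large, follows from Korn's inequality on $\mathcal{F}$ (the finite-dimensional components being directly dominated by the shift term). Thus the operator $\mathcal{A}$ associated with $(\mathcal{H},\mathcal{V},a)$ by the form method is self-adjoint and bounded below, so $-\mathcal{A}$ generates an analytic semigroup on $\mathcal{H}$.

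\textbf{Domain, regularity and conclusion.} The next step is to identify $D(\mathcal{A})$: $(u,\ell,k)\in D(\mathcal{A})$ precisely when there is $\pi\in H^1(\mathcal{F})/\mathbb{R}$ with $-\nabla\cdot\mathbb{T}(u,\pi)\in[L^2(\mathcal{F})]^2$, the Navier conditions \eqref{SNBCl1} hold, and the force/torque balances are matched; the crucial point is the elliptic regularity inclusion $D(\mathcal{A})\subset[H^2(\mathcal{F})]^2\times\mathbb{R}^2\times\mathbb{R}$, obtained by localizing near $\partial\mathcal{F}=\partial\Omega\cup\partial\mathcal{S}$, flattening the boundary, and invoking the $H^2$-regularity theory for the stationary Stokes system with slip-type boundary conditions, the surface integrals defining the coupling being controlled by $\|u\|_{H^2(\mathcal{F})}$. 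After applying the fluid–structure Leray projection $\mathcal{P}$ (the orthogonal projection of $[L^2(\mathcal{F})]^2\times\mathbb{R}^2\times\mathbb{R}$ onto $\mathcal{H}$, a bounded operator), the system is equivalent to $\partial_t W + \mathcal{A}W = \mathcal{P}(F_1,F_2,F_3)$, $W(0)=(w^0,\ell_w^0,k_w^0)$, with $\overline{\pi}$ recovered a posteriori. Since $-\mathcal{A}$ generates an analytic semigroup on a Hilbert space it has maximal $L^2$-regularity, and since $(\mathcal{H},D(\mathcal{A}))_{1/2,2}=D(\mathcal{A}^{1/2})=\mathcal{V}$ the hypothesis $w^0\in[H^1(\mathcal{F})]^2$ with the compatibility conditions is exactly $W(0)\in\mathcal{V}$; hence there is a unique $W\in L^2(0,T;D(\mathcal{A}))\cap H^1(0,T;\mathcal{H})\cap C([0,T];\mathcal{V})$ with the corresponding estimate. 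Once $\overline{w}\in L^2(0,T;[H^2(\mathcal{F})]^2)$ is known, $\nabla\overline{\pi}=F_1-\partial_t\overline{w}+\nu\Delta\overline{w}\in L^2(0,T;[L^2(\mathcal{F})]^2)$, so $\overline{\pi}\in L^2(0,T;H^1(\mathcal{F})/\mathbb{R})$; collecting norms gives \eqref{reg1}, and uniqueness follows from the abstract formulation.

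\textbf{Main obstacle and an alternative.} I expect the delicate point to be the elliptic regularity step $D(\mathcal{A})\subset[H^2(\mathcal{F})]^2\times\mathbb{R}^2\times\mathbb{R}$: one must gain two derivatives for the stationary fluid–structure problem even though the boundary condition on $\partial\mathcal{S}$ mixes the fluid trace with the rigid velocity $\ell+ky^\perp$ and the pressure is present, which requires a careful use of the $H^2$-regularity theory for the Stokes operator with Navier (slip) boundary conditions near both $\partial\Omega$ and $\partial\mathcal{S}$, the finite-dimensional coupling entering only as a lower-order (compact) perturbation. An alternative, bypassing abstract semigroup theory and close to the route of \cite{MR3266142}, is a Galerkin scheme in a Hilbert basis of $\mathcal{V}$: test with $W$ to get the $L^\infty(L^2)\cap L^2(H^1)$ bound, then with $\partial_t W$ to get $L^\infty(H^1)\cap H^1(L^2)$, and finally upgrade to $L^2(H^2)$ and recover the pressure by the stationary elliptic estimate at a.e.\ time, before passing to the limit.
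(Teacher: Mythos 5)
Your proposal is correct and follows essentially the same route as the paper: the paper also works on the fluid--structure energy space $\mathbb{H}$ (rigid motions extended inside $\mathcal{S}$) with the inner product $\int_\mathcal{F} u\cdot v\,dy+m\ell_u\cdot\ell_v+Jk_uk_v$, realizes the problem as $\overline{w}'=A\overline{w}+F$ for a self-adjoint operator $A=\mathbb{P}\mathcal{A}$ generating a contraction semigroup, and obtains \eqref{reg1} from the corresponding parabolic regularity, citing \cite{MR3266142} for the details you spell out (form construction, Korn, Stokes--Navier-slip elliptic regularity for the domain, maximal $L^2$-regularity with $H^1$ data in the form domain, pressure recovered a posteriori). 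No gap to report.
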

\begin{proof}
The proof of the above theorem is based on semigroup theory. For the sake of completeness, we just recall the main ideas of the proof.
 
We note that $w^0$ and $\overline{w}$ are extended by $\ell_w^0+k_w^0y^\perp$ and $\ell_{\overline{w}}+k_{\overline{w}}y^\perp$ on $\mathcal{S}$ respectively.
Let define the following Hilbert spaces
\begin{equation*}
\mathbb{H}=\left\lbrace w\in [L^2(\Omega)]^2\;|\; \nabla\cdot w=0,\quad w_n=0,\text{ on }\partial\Omega,\quad D(w)=0\text{ in }\mathcal{S} \right\rbrace,
\end{equation*}
\begin{equation*}
\mathbb{V}=\left\lbrace w\in \mathbb{H}\;|\;  w|_\mathcal{F}\in [H^1(\mathcal{F})]^2 \right\rbrace.
\end{equation*}
We notice that the condition $D(w)=0$ on $\mathcal{S}$ is equivalent to $w=w_\mathcal{S}\in \mathcal{R}$ on $\mathcal{S}$ where 
$$
\mathcal{R}=\left\lbrace v\in\mathbb{R}^2\;|\; \text{there exist}\;\ell_v\in\mathbb{R}^2,\; k_v\in\mathbb{R}\;\text{such that}\; v(y)=\ell_v+k_v y^\perp\right\rbrace .
$$
For $w,v\in\mathbb{H}$, we define the inner product on $\mathbb{H}$ by
$$
\left\langle w,v \right\rangle = \int_\mathcal{F}w\cdot v \ dy+m\ell_v\cdot\ell_w+Jk_vk_w.
$$
Let define also the orthogonal projector $\mathbb{P}: [L^2(\Omega)]^2\longrightarrow \mathbb{H}$.

The system \eqref{Sl1}, \eqref{SNBCl1} and \eqref{SIl1} can be reduced to the following form
\begin{equation}
\overline{w} '=A \overline{w}+F,\quad  \overline{w}(0)=w^0,
\end{equation}
where the operator $A$ is defined by 
\begin{equation}
\mathcal{A}w=
\left\{
\begin{array}{r}
\nu\Delta w\quad\text{in}\quad\mathcal{F}, \\
-\frac{2\nu}{m}\int_{\partial \mathcal{S}} D(w)n \ d\Gamma-\left( \frac{2\nu }{J}\left[ \int_{\partial \mathcal{S}}y^\perp\cdot D(w)n \ d\Gamma\right] \right) y^\perp  \quad \text{on}\quad \partial \mathcal{S},
\end{array}
\right.
\end{equation}
\begin{multline*}
A=\mathbb{P}\mathcal{A},\quad \mathcal{D}(A)=\mathcal{D}(\mathcal{A})=\{ w\in \mathbb{V},\;w|_{\mathcal{F}}\in [H^2(\mathcal{F})]^2,\;\left[ 2\nu D(w) n+\beta_{\Omega} w\right]_{\tau}=0,\ \text{  on }\partial\Omega,\\
\left[ 2\nu D(w) n + \beta_{\mathcal{S}} (w-w_\mathcal{S} )\right]_{\tau}=0,  \ \text{ on } \partial \mathcal{S}  \},
\end{multline*}
and
$$
F=\mathbb{P}\left( F_11_{\mathcal{F}}+\left( \frac{F_2}{m}+\frac{F_3y^\perp}{J}\right) 1_{\mathcal{S}}\right).
$$ 
In \cite[Lemma 3.1]{MR3266142}, it is proved that the operator $A$ is self-adjoint and it generates a semigroup of contractions on $\mathbb{H}$. Thus, we deduce Theorem \ref{regu} (see \cite[Proposition 3.3]{MR3266142}).
\end{proof}
We note here that since $A$ is a self-adjoint operator, then for any $w\in \mathcal{D}(A)$, we have $$\left\|w\right\|_{\mathcal{D}((-A)^{1/2})}^2=\left\langle w,w \right\rangle +\left\langle w,Aw \right\rangle.$$
We also need some regularity results on the linear system \eqref{Sl1}, \eqref{SNBCl1}, \eqref{SIl1}.

Let $w^0\in [H^3(\mathcal{F})]^2$, $\pi^0\in H^2(\mathcal{F})$, $(\ell_w^0,k_w^0)\in\mathbb{R}^3$ and we set $w^1\in [H^1(\mathcal{F})]^2$, $(\ell_w^1,k_w^1)\in\mathbb{R}^3$ such that
$$
w^1=\nabla\cdot\mathbb{T}(w^0,\pi^0)+F_1(0,\cdot) 
$$
$$
\ell_w^1=-\frac{1}{m}\int_{\partial \mathcal{S}}\mathbb{T}( w^0, \pi^0)n\ d\Gamma+\frac{1}{m}F_2(0) , 
$$
and 
$$
k_w^1=-\frac{1}{J}\int_{\partial \mathcal{S}}y^\perp\cdot\mathbb{T}( w^0, \pi^0)n \ d\Gamma+\frac{1}{J}F_3(0).
$$

Moreover, we suppose that $\pi^0$ satisfies the following system
\begin{equation*}
	\left\{ 
	\begin{array}{cc}
		\Delta \pi^0=\nabla\cdot F_1(0,\cdot) & \text{ in } \mathcal{F},\\
		\frac{\partial\pi^0}{\partial n}=(\nu \Delta w^0+F_1(0,\cdot))\cdot n- (\ell^1_w+k^1_wy^\perp)\cdot n1_{\partial\mathcal{S}}  & \text { on }\partial\mathcal{F}.
	\end{array}
	\right.
\end{equation*}
By the Lax-Milgram theorem, the above system admits a unique solution $\pi^0\in H^1(\mathcal{F})\slash \mathbb{R}$, such that
\begin{multline*}
\left\|\nabla\pi^0\right\|_{[L^2(\mathcal{F})]^2}+\left|\ell^1_w \right|+\left|k_w^1\right|\leq C \bigg(\left\| w^0\right\|_{[H^3(\mathcal{F})]^2}+\left\|F_1\right\|_{L^2(0,T;[H^2(\mathcal{F})]^2)\cap H^1(0,T;[L^2(\mathcal{F})]^2)}\\+\left\|F_2\right\|_{[H^1(0,T)]^2}+\left\|F_3\right\|_{H^1(0,T)}\bigg).  
\end{multline*}
Since $\partial \mathcal{F}$ is sufficiently regular, we get that 
$$(\nu \Delta w^0+F_1(0,\cdot))\cdot n- (\ell^1_w+k^1_wy^\perp)\cdot n1_{\partial\mathcal{S}}\in H^{1/2}(\partial\mathcal{F})
.$$  
Then, using classical elliptic estimate of the Neumann system, we obtain 
\begin{multline*}
	\left\|\pi^0\right\|_{H^2(\mathcal{F})}+\left|\ell^1_w \right|+\left|k_w^1\right|\leq C \bigg(\left\| w^0\right\|_{[H^3(\mathcal{F})]^2}+\left\|F_1\right\|_{L^2(0,T;[H^2(\mathcal{F})]^2)\cap H^1(0,T;[L^2(\mathcal{F})]^2)}\\+\left\|F_2\right\|_{[H^1(0,T)]^2}+\left\|F_3\right\|_{H^1(0,T)}   \bigg).    
\end{multline*}
We set the compatibility conditions
\begin{equation}
\label{comp}
\nabla\cdot w^1=0, \;  \text{ in }\mathcal{F},\quad
w^1_n = 0,  \; \text{ on }\partial\Omega,\quad
w^1_n(y)=(\ell_w^1+k_w^1 y^\perp)_n,  \; y\in \partial \mathcal{S}.
\end{equation}
\begin{Proposition}
\label{prop}
Let $T>0$. Suppose that $F_1\in L^2(0,T;[H^2(\mathcal{F})]^2)\cap H^1(0,T;[L^2(\mathcal{F})]^2)$, $F_2\in [H^1(0,T)]^2$, 
$F_3\in H^1(0,T)$ and let $(w^0, \ell^0_w, k_w^0)\in [H^3(\mathcal{F})]^2\times\mathbb{R}^2\times\mathbb{R}$ such that the compatibility conditions \eqref{comp} hold.
Then, there exists a unique solution to the problem \eqref{Sl1}, \eqref{SNBCl1} and \eqref{SIl1} such that
$$
\overline{w} \in L^2(0,T;[H^4(\mathcal{F})]^2)\cap H^2(0,T;[L^2(\mathcal{F})]^2),\quad  \overline{\pi} \in L^2(0,T;H^3(\mathcal{F}))\cap H^1(0,T;H^1(\mathcal{F})),
$$
$$
(\ell_{\overline{w}},k_{\overline{w}})\in [H^2(0,T)]^2\times H^2(0,T).
$$
Moreover, it satisfies the following estimate
\begin{multline}
\label{reg2}
\left\| \overline{w}\right\|_{L^2(0,T;[H^4(\mathcal{F})]^2)\cap H^2(0,T;[L^2(\mathcal{F})]^2)}+ \left\| \overline{\pi}\right\|_{L^2(0,T;H^3(\mathcal{F}))\cap H^1(0,T;H^1(\mathcal{F}))}+\left\|\ell_{\overline{w}}\right\|_{[H^2(0,T)]^2}+\left\|k_{\overline{w}}\right\|_{H^2(0,T)} \\\leq C\bigg( \left\|F_1\right\|_{L^2(0,T;[H^2(\mathcal{F})]^2)\cap H^1(0,T;[L^2(\mathcal{F})]^2)}+\left\|F_2\right\|_{[H^1(0,T)]^2}+\left\|F_3\right\|_{H^1(0,T)}\\+\left\| w^0\right\|_{[H^3(\mathcal{F})]^2}+\left|\ell^0_w \right|+\left|k_w^0\right| \bigg). 
\end{multline}
\end{Proposition}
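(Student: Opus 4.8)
The plan is to bootstrap from the lower-regularity theory of Theorem~\ref{regu}. We already know that for data $(F_1,F_2,F_3,w^0,\ell_w^0,k_w^0)$ in the spaces of Theorem~\ref{regu} there is a unique solution in the energy class, and the semigroup $e^{tA}$ generated by the self-adjoint operator $A$ acts on the scale of spaces $\mathcal{D}(A^k)$. The idea is therefore to differentiate the equation in time: formally setting $(\widetilde{w},\widetilde{\pi},\widetilde{\ell},\widetilde{k})=\partial_t(\overline{w},\overline{\pi},\ell_{\overline{w}},k_{\overline{w}})$, this new tuple solves the \emph{same} system \eqref{Sl1}--\eqref{SNBCl1} with right-hand sides $\partial_tF_1,\partial_tF_2,\partial_tF_3$ and with initial data exactly $(w^1,\ell_w^1,k_w^1)$ — this is the reason those quantities, and the elliptic problem defining $\pi^0$, were introduced just before the statement. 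The compatibility conditions \eqref{comp} are precisely what is needed for $(w^1,\ell_w^1,k_w^1)$ to be admissible initial data for Theorem~\ref{regu}. Applying Theorem~\ref{regu} to the differentiated system gives
$$
\partial_t\overline{w}\in L^2(0,T;[H^2(\mathcal{F})]^2)\cap C([0,T];[H^1(\mathcal{F})]^2)\cap H^1(0,T;[L^2(\mathcal{F})]^2),
$$
and likewise $\partial_t(\ell_{\overline{w}},k_{\overline{w}})\in[H^1(0,T)]^2\times H^1(0,T)$, with the corresponding norm bound in terms of $\|F_1\|_{H^1(0,T;L^2)}$, $\|\partial_tF_2\|$, $\|\partial_tF_3\|$ and $\|w^1\|_{H^1}+|\ell_w^1|+|k_w^1|$; the latter is controlled by the data through the elliptic estimates recalled above. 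In particular $\overline{w}\in H^2(0,T;[L^2(\mathcal{F})]^2)$ and $(\ell_{\overline{w}},k_{\overline{w}})\in[H^2(0,T)]^2\times H^2(0,T)$, which are two of the asserted conclusions.

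Next I would upgrade the spatial regularity by viewing \eqref{Sl1} at each fixed time as a stationary Stokes problem for $(\overline{w},\overline{\pi})$ in $\mathcal{F}$ with Navier-type boundary conditions on $\partial\Omega\cup\partial\mathcal{S}$, with source $F_1-\partial_t\overline{w}$ and with the structure equations playing the role of an extra boundary relation on $\partial\mathcal{S}$. Since from the previous step $F_1-\partial_t\overline{w}\in L^2(0,T;[H^2(\mathcal{F})]^2)$, elliptic regularity for the Stokes system with these mixed Navier boundary conditions (which hold because $\partial\Omega$ and $\partial\mathcal{S}$ are smooth and $\mathcal{F}$ is a fixed, smooth domain) gives $\overline{w}\in L^2(0,T;[H^4(\mathcal{F})]^2)$ and $\overline{\pi}\in L^2(0,T;H^3(\mathcal{F}))$, with the norm bound propagating. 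The time regularity of the pressure, $\overline{\pi}\in H^1(0,T;H^1(\mathcal{F}))$, follows by differentiating the Stokes problem in $t$ once more and using $\partial_t\overline{w}\in L^2(0,T;[H^2(\mathcal{F})]^2)$ together with the $H^1$-in-space elliptic estimate for the pressure (as already done for $\pi^0$ via Lax--Milgram). Collecting these statements yields all the regularity and the estimate \eqref{reg2}.

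The main obstacle is the boundary-elliptic step: one must establish the $H^4$-$H^3$ Stokes regularity estimate with the genuinely mixed boundary conditions appearing in \eqref{SNBCl1} (normal component Dirichlet, tangential component a Robin-type condition involving $D(\overline{w})n$), and, on $\partial\mathcal{S}$, the non-local coupling through the integrals $\int_{\partial\mathcal{S}}\mathbb{T}(\overline{w},\overline{\pi})n\,d\Gamma$ and $\int_{\partial\mathcal{S}}y^\perp\cdot\mathbb{T}(\overline{w},\overline{\pi})n\,d\Gamma$ in the rigid-body equations; one has to check that these low-order, non-local terms do not destroy the elliptic a priori estimate (they are handled by absorbing them, using that they are controlled by lower-order norms of $\overline{w}$ and interpolation). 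A secondary technical point is to justify the time-differentiation rigorously — i.e. that $\partial_t$ of the solution really solves the differentiated system with the stated initial data — which is done by the standard difference-quotient argument in $t$, using the uniqueness and the linear estimate \eqref{reg1} to pass to the limit. Once these two points are in place, the rest is bookkeeping of norms to assemble \eqref{reg2}.
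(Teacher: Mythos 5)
Your proposal follows essentially the same route as the paper: differentiate the system in time, use the compatibility conditions \eqref{comp} and the elliptic construction of $(\pi^0,w^1,\ell_w^1,k_w^1)$ to apply Theorem \ref{regu} to the differentiated problem, and then upgrade the spatial regularity of $(\overline{w},\overline{\pi})$ via Stokes regularity with Navier boundary conditions (which the paper handles by citing the reference \cite{GS} rather than reproving the mixed-boundary elliptic estimate you flag as the main obstacle). The argument is correct and matches the paper's proof.
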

\begin{proof}
we differentiate in time the system \eqref{Sl1}, \eqref{SNBCl1}.  we get
\begin{equation}
\label{Sl}
\left\{
\begin{array}{rl}
\partial_{tt} \overline{w}-\nabla \cdot \mathbb{T}(\partial_t\overline{w},\partial_t\overline{\pi})=\partial_tF_1  & \text { in }(0,T)\times\mathcal{F},\\
\nabla \cdot (\partial_t\overline{w})=0& \text { in }(0,T)\times\mathcal{F},\\
m\ell_{\overline{w}}''(t)=-\int_{\partial \mathcal{S}}\mathbb{T}( \partial_t\overline{w}, \partial_t\overline{\pi})n\ d\Gamma+\partial_tF_2  & t\in(0,T),\\
J(k_{\overline{w}}''(t))=-\int_{\partial \mathcal{S}}y^\perp\cdot\mathbb{T}( \partial_t\overline{w}, \partial_t\overline{\pi})n \ d\Gamma+\partial_tF_3& t\in(0,T),\\
\end{array}
\right.
\end{equation}
with the boundary conditions
\begin{equation}
\label{Cl1}
\left\{
\begin{array}{rl}
\partial_t\overline{w}_n = 0  &  \text { on }(0,T)\times\partial\Omega,\\
\left[ 2\nu D(\partial_t\overline{w}) n+\beta_{\Omega}  \partial_t\overline{w}\right]_{\tau}=0&  \text { on }(0,T)\times\partial\Omega,\\
( \partial_t\overline{w}- \overline{w}'_\mathcal{S})_n=0&   \text { on }(0,T)\times\partial \mathcal{S},\\
\left[ 2\nu D(\partial_t\overline{w}) n + \beta_{\mathcal{S}} ( \partial_t\overline{w}- \overline{w}'_\mathcal{S})\right]_{\tau}=0 &  \text { on }(0,T)\times\partial \mathcal{S},
\end{array}
\right.
\end{equation}
where $ \overline{w}'_\mathcal{S}=\ell'_{\overline{w}}+k'_{\overline{w}} y^\perp$
with the initial conditions 
\begin{equation}
\label{il1}
\partial_t\overline{w}(0,\cdot) = w^1,\; \text{ in }\mathcal{F},\quad
\ell'_{\overline{w}}(0)=\ell_w^1,\quad k'_{\overline{w}}(0)=k_w^1,\quad \ell_{\overline{w}}(0)=\ell_w^0,\quad k_{\overline{w}}(0)=k_w^0.
\end{equation}
Since \eqref{comp} is satisfied, we can apply Theorem \ref{regu}, we get
\begin{multline}
\label{eq16}
\left\| \partial_t\overline{w}\right\|_{L^2(0,T;[H^2(\mathcal{F})]^2)\cap H^1(0,T;[L^2(\mathcal{F})]^2)}+ \left\| \partial_t\overline{\pi}\right\|_{L^2(0,T;H^1(\mathcal{F}))}+\left\|\ell'_{\overline{w}}\right\|_{[H^1(0,T)]^2}+\left\|k'_{\overline{w}}\right\|_{H^1(0,T)} \\\leq C\bigg( \left\|F_1\right\|_{L^2(0,T;[H^2(\mathcal{F})]^2)\cap H^1(0,T;[L^2(\mathcal{F})]^2)}+\left\|F_2\right\|_{[H^1(0,T)]^2}+\left\|F_3\right\|_{H^1(0,T)}+\left\|w^0 \right\|_{[H^3(\mathcal{F})]^2}\bigg). 
\end{multline}
We obtain from \eqref{eq16} that $\ell_{\overline{w}}\in [H^2(0,T)]^2$ and $k_{\overline{w}}\in H^2(0,T)$.
Then, using the regularity results for the unstationary Stokes system with Navier boundary conditions proved in \cite{GS}, combined  with \eqref{eq16} and \eqref{reg1}, we get
\begin{multline*}
\left\|\overline{w}\right\|_{L^2(0,T;[H^4(\mathcal{F})]^2)\cap H^2(0,T;[L^2(\mathcal{F})]^2)}+ \left\| \overline{\pi}\right\|_{L^2(0,T;H^3(\mathcal{F}))\cap H^1(0,T;H^1(\mathcal{F}))}+\left\|\ell_{\overline{w}}\right\|_{[H^2(0,T)]^2}+\left\|k_{\overline{w}}\right\|_{H^2(0,T)} \\\leq C\bigg( \left\|F_1\right\|_{L^2(0,T;[H^2(\mathcal{F})]^2)\cap H^1(0,T;[L^2(\mathcal{F})]^2)}+\left\|F_2\right\|_{[H^1(0,T)]^2}+\left\|F_3\right\|_{H^1(0,T)}\\+\left\|w^0 \right\|_{[H^3(\mathcal{F})]^2}+\left|\ell^0_w\right|+\left|k^0_w\right|\bigg). 
\end{multline*}
Then, we obtain \eqref{reg2}.
\end{proof}
\section{Change of variables}
\label{chg}
To treat the free boundary problem \eqref{NSE}, \eqref{RBE}, \eqref{NBC}, \eqref{IC}, we consider an equivalent system written in a fixed domain using a change of variables that was already introduced in \cite{MR2029294}. In fact, we construct an extension of the structure flow \eqref{strucflow} over $\Omega$ by a regular and incompressible flow. First, we need to control the distance between the structure and the boundary $\partial(\Omega\backslash\overline{\mathcal{O}})$. 

The condition \eqref{non-collisiont} implies that there exists $d>0$ such that 
\begin{equation*}
d(\partial (\Omega\backslash\overline{\mathcal{O}}), \mathcal{S})= d.
\end{equation*}
Let $t\in [0,T]$. We have for a fixed $y\in \mathcal{S}$
\begin{equation*}
d(y,\mathcal{S}(t))\leq \left|h(t)+R_{\theta(t)}y-y\right|.
\end{equation*}
Then, we get
\begin{equation}
d(\mathcal{S}(t),\mathcal{S})\leq C\left( \left|h(t)\right| +\left| R_{\theta(t)}-I_2\right|\right).
\end{equation}
If 
\begin{equation}
\label{smallness}
\left|h(t)\right|+\left| R_{\theta(t)}-I_2\right|\leq \frac{d}{2C},
\end{equation} we get
\begin{equation*}
d(\mathcal{S}(t),\mathcal{S})\leq \frac{d}{2}.
\end{equation*}
Thus,
we obtain
$$
d(\partial (\Omega\backslash \overline{\mathcal{O}}), \mathcal{S}(t))\geq  \frac{d}{2},\quad t\in[0,T].
$$
In other words, we only assume that no collision occurs between the structure and the boundary $\partial (\Omega\backslash \overline{\mathcal{O}})$ at time $T$. In fact, if the initial data are small enough, then the displacement of the structure remains small, then \eqref{smallness} is satisfied. Thus, no contact can occur between the solid and the boundary for any $t\in [0,T]$.

Following \cite{MR2029294}, we can construct a change of variables $X$ and $Y$ with the following properties
\begin{itemize}
\item For any $t\in [0,T]$, $X$ and $Y$ are $C^\infty$ diffeomorphisms from $\Omega$ into itself,
\item The function $X$ is invertible of inverse $Y$,
\item In a neighborhood of $\mathcal{S}=\mathcal{S}(T)$, $X(t,y)=X_\mathcal{S}(t,y)=h(t)+R_{\theta(t)}y$,
\item  In a neighborhood of $\partial\Omega$ and of $\mathcal{O}$, $X(t,y)=y$,
\item $\det\nabla X (t,y)=1$, for all $y\in \Omega$,
\item In a neighborhood of $\mathcal{S}$, $\nabla X(t,y)=R_{\theta(t)}$ and $\nabla Y(t,X(t,y))=R_{\theta(t)}^{-1}$.

Moreover, we have
\begin{equation}
\label{im}
\left\|X\right\|_{H^2(0,T;[C^2(\overline{\Omega})]^2)}+ \left\|Y\right\|_{H^2(0,T;[C^2(\overline{\Omega})]^2)}\leq C (\left\|h\right\|_{[H^2(0,T)]^2}+\left\|\theta\right\|_{H^2(0,T)} ),
\end{equation}
where $C$ depends on $T$.
\end{itemize}
Now, we set 
\begin{equation*}
u(t,y)=\Cof(\nabla X(t,y))^*U(t,X(t,y)),\quad P(t,y)=p(t,X(t,y)).
\end{equation*}
Then, we have
$$
u_\mathcal{S}(t,y)=R_{\theta(t)}^{-1}(h'(t)+\omega(t)(x-h(t))^\perp)=\ell(t)+\omega(t)y^\perp,
$$
where $\ell(t)=R_{\theta(t)}^{-1}h'(t)$.
We transform the system \eqref{NSE}, \eqref{RBE}, \eqref{NBC}, \eqref{IC} by using this change of variables. Calculations of this type are already done in \cite[Lemma 2.1]{MR3266142}. 
Thus, the system \eqref{NSE}, \eqref{RBE}, \eqref{NBC}, \eqref{IC} is equivalent to
\begin{equation}
\label{sf1}
\left\{
\begin{array}{rl}
\partial_t u-\nu\mathcal{L}u+\mathcal{M}u+\mathcal{N}u+\mathcal{G}p= v^*1_{\mathcal{O}}  & \text { in }(0,T)\times\mathcal{F},\\
\nabla \cdot u=0& \text { in }(0,T)\times\mathcal{F},\\
\end{array}
\right.
\end{equation}
\begin{equation}
\label{sf2}
\left\{
\begin{array}{rl}
m\ell'(t)=-\int_{\partial \mathcal{S}}\mathbb{T}(u,p)n \ d\Gamma -m\omega \ell(t)^\perp & t\in(0,T),\\
J\omega'(t)=-\int_{\partial \mathcal{S}}y^\perp\cdot\mathbb{T}(u,p)n\ d\Gamma& t\in(0,T),\\
\end{array}
\right.
\end{equation}
\begin{equation}
\label{sf3}
\left\{
\begin{array}{rl}
u_{n}= 0  &  \text { on }(0,T)\times\partial\Omega,\\
\left[ 2\nu D(u)n+\beta_{\Omega} u\right]_{\tau}=0&  \text { on }(0,T)\times\partial\Omega,\\
(u-u_\mathcal{S})_{n}=0&   \text { on }(0,T)\times\partial \mathcal{S},\\
\left[ 2\nu D(u) n + \beta_{\mathcal{S}} \left(u-u_\mathcal{S} \right)\right]_{\tau}=0 &  \text { on }(0,T)\times\partial \mathcal{S},
\end{array}
\right.
\end{equation}
where $n$ and $\tau$ respectively stand for the normal and the tangential vectors on $\partial\mathcal{F}$, with 
\begin{equation*}
\left[ \mathcal{L}u\right]_i=\sum_{j,k}\frac{\partial}{\partial y_j}\left(g^{jk}\frac{\partial u_i}{\partial y_k}\right)+ 2\sum_{j,k,l} g^{kl}\Gamma^i_{jk}\frac{\partial u_j}{\partial y_l}+\sum_{j,k,l}\left(\frac{\partial}{\partial y_k}(g^{kl}\Gamma^i_{jl})+\sum_m g^{kl}\Gamma^m_{jl}\Gamma^i_{km} \right)u_j,   
\end{equation*}
\begin{equation*}
\left[ \mathcal{M}u\right]_i=\sum_{j}\frac{\partial Y_j}{\partial t}\frac{\partial u_i}{\partial y_j}+ \sum_{j,k}\left(  \Gamma^i_{jk}\frac{\partial Y_k}{\partial t}+\frac{\partial Y_i}{\partial x_k}\frac{\partial^2X_k}{\partial t\partial y_j} \right)u_j,   
\end{equation*}
\begin{equation*}
\left[ \mathcal{N}u\right]_i=\sum_{j}u_j\frac{\partial u_i}{\partial y_j}+\sum_{j,k} \Gamma^i_{jk}u_ju_k,   
\end{equation*}
\begin{equation*}
\left[ \mathcal{G}p\right]_i=\sum_{j}g^{ij}\frac{\partial p}{\partial y_j},  
\end{equation*}
where 
$$
g^{ij}=\sum_k \frac{\partial Y_i}{\partial x_k} \frac{\partial Y_j}{\partial x_k},\quad g_{ij}=\sum_k \frac{\partial X_i}{\partial y_k} \frac{\partial X_j}{\partial y_k},
$$
and 
$$
\Gamma^{k}_{ij}=\frac{1}{2}\sum _l g^{kl}\left( \frac{\partial g_{il}}{\partial y_j}+\frac{\partial g_{jl}}{\partial y_i}+\frac{\partial g_{ij}}{\partial y_l} \right). 
$$
Finally, we set the initial conditions for $y\in \mathcal{F}$

\begin{equation}
\label{sf4}
u(0,y)=\Cof(\nabla X(0,y))^*u^0(X(0,y))=u_0(y),\quad \ell(0)=R^{-1}_{\theta^0}\widetilde{\ell^0}=\ell^0,\quad \omega(0)=\omega^0.
\end{equation}

\section{Carleman estimate for the Laplacian problem with Navier slip boundary conditions}
\label{cestim}

We prove first, a Carleman inequality for the Laplacian problem with non-homogeneous Navier boundary conditions.
From \cite[Lemma 1.1]{CDI}, we can construct a function $\eta\in C^2(\overline{\mathcal{F}})$	such that 

\begin{equation}
\label{eta}
\eta>0 \text{ in } \mathcal{F},\quad \eta=0 \text{ on } \partial \mathcal{F},\quad  \left| \nabla \eta \right|>0  \text{ in } \overline{\mathcal{F} \; \backslash\; \mathcal{O}_\eta} \text{ where } \mathcal{O}_\eta \subset \subset \mathcal{O},\quad \nabla\eta\cdot n <0,\text{ on } \partial\mathcal{F}.
\end{equation}
Let $\lambda>0$ and let take $\alpha=e^{\lambda \eta}$. We have the following proposition.   
\begin{Proposition}
\label{CS}
Let $\mathcal{F}$ and $\mathcal{O}$ be two open sets such that $\mathcal{O}\subset \mathcal{F}$. Suppose that the friction coefficient $\beta$ is a positive constant, then there exist $C=C(\mathcal{F},\mathcal{O})>0$, $s_1$ and $\lambda_1$ where $s_1=s_1(\mathcal{F},\mathcal{O})$, $\lambda_1= \lambda_1(\mathcal{F},\mathcal{O})$,
such that the solution $\psi\in [H^2(\mathcal{F})]^2$ of the system
\begin{equation}
\label{SN}
\left\{
\begin{array}{rl}
-\Delta \psi=f&\text{in}\quad\mathcal{F}, \\
\nabla \cdot \psi=0& \text{in}\quad\mathcal{F},\\  
\psi_n=a_n&  \text{on}\quad\partial \mathcal{F},\\
\left[ 2 D(\psi) n+\beta \psi\right]_{\tau}=b& \text{on}\quad \partial \mathcal{F},
\end{array}
\right.
\end{equation}
satisfies the inequality 
\begin{multline}
\label{carlS}
s^2\lambda^2\int_{\mathcal{F}}e^{2s\alpha}\alpha^2 \left|\nabla \psi\right|^2   \ dy	+s^4\lambda^4\int_\mathcal{F} e^{2s\alpha}\alpha^4\left|\psi\right|^2  \ dy +\beta s^3\lambda^2e^{2s}\int_{\partial\mathcal{F}}\left|\psi_\tau\right|^2d\Gamma \\\leq C\bigg(s^4\lambda^4\int_{\mathcal{O}}e^{2s\alpha}\alpha^4\left| \psi\right|^2   \ dy+s\int_{\mathcal{F}} e^{2s\alpha}\alpha\left| f\right|^2  \ dy +s^3\lambda^2e^{2s}\int_{\partial\mathcal{F}}\left| \nabla_\tau (a\cdot n)\right| ^2 d\Gamma\\+ s^4\lambda^2e^{2s}\left\|a_n\right\|^2_{[H^{1/2}(\partial\mathcal{F})]^2}+s^3\lambda^2e^{2s}\int_{\partial\mathcal{F}} \left|b\right|^2 \ d\Gamma\bigg),
\end{multline}
for any $s\geq s_1$ and $\lambda\geq \lambda_1$, where  $f\in [L^2(\mathcal{F})]^2$, $a\in [H^{3/2}(\partial\mathcal{F})]^2$, $b\in [H^{1/2}(\partial\mathcal{F})]^2$.
\end{Proposition}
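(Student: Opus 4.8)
The plan is to transfer the estimate to the scalar vorticity $w=\operatorname{curl}\psi=\partial_1\psi_2-\partial_2\psi_1$, for which a classical Carleman estimate for the Laplacian is available, then to recover the bounds on $\psi$ itself from the elliptic (div--curl) system it solves, the boundary term carrying the factor $\beta$ being produced by a separate energy identity. Taking the curl of the first equation in \eqref{SN} and using $\nabla\cdot\psi=0$ gives, in $\mathcal F$,
\begin{equation*}
-\Delta w=\operatorname{curl} f,\qquad \nabla w=f^{\perp},
\end{equation*}
the second (pointwise) identity holding precisely because $\psi$ is divergence free; it is what will allow us to keep $f$, rather than $\operatorname{curl} f$, on the right-hand side. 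The first task is to compute the trace of $w$: writing the two Navier conditions of \eqref{SN} in a boundary-adapted orthonormal frame $(n,\tau)$ and eliminating $\partial_n(\psi\cdot n)$ via $\psi\cdot n=a\cdot n$ and $\nabla\cdot\psi=0$, one obtains on $\partial\mathcal F$ a relation of the form
\begin{equation*}
w=(\beta+2\kappa)\,(\psi\cdot\tau)+c_1\,\nabla_\tau(a\cdot n)+c_2\,(b\cdot\tau),
\end{equation*}
$\kappa$ denoting the signed curvature of $\partial\mathcal F$ and $c_1,c_2$ fixed constants; so the trace of $w$ is controlled by $\psi_\tau$ and the data $a,b$.

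Next I would apply the Carleman estimate for $-\Delta$ associated with the weight $e^{2s\alpha}$, $\alpha=e^{\lambda\eta}$, to $w$, exploiting the sign condition $\nabla\eta\cdot n<0$ from \eqref{eta} to handle the boundary terms produced by the Carleman identity (note that $\alpha\equiv1$ on $\partial\mathcal F$ and $\alpha\ge1$ in $\mathcal F$, which is why all boundary weights in \eqref{carlS} are simply $e^{2s}$). In the source term one integrates by parts once — equivalently, one uses $\nabla w=f^{\perp}$ — so that only $f$ (not its derivatives) appears on the right, at the cost of a term $\varepsilon\,s^2\lambda^2\int_{\mathcal F}e^{2s\alpha}\alpha^2 w^2$ that is absorbed. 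Inserting the trace relation for $w$ gives an estimate of the form
\begin{equation*}
s^3\lambda^4\!\int_{\mathcal F}\!e^{2s\alpha}\alpha^3w^2+s\lambda^2\!\int_{\mathcal F}\!e^{2s\alpha}\alpha|\nabla w|^2\le C\Big(s\!\int_{\mathcal F}\!e^{2s\alpha}\alpha|f|^2+s^3\lambda^4\!\int_{\mathcal O_\eta}\!e^{2s\alpha}\alpha^3w^2+\mathcal{B}\Big),
\end{equation*}
where $\mathcal{B}$ gathers $s^3\lambda^2e^{2s}\int_{\partial\mathcal F}|\psi_\tau|^2$, $s^3\lambda^2e^{2s}\int_{\partial\mathcal F}|\nabla_\tau(a\cdot n)|^2$, $s^4\lambda^2e^{2s}\|a_n\|^2_{H^{1/2}(\partial\mathcal F)}$ and $s^3\lambda^2e^{2s}\int_{\partial\mathcal F}|b|^2$. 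To pass from $w$ back to $\psi$, I would use that $\psi$ solves $\operatorname{curl}\psi=w$, $\nabla\cdot\psi=0$ in $\mathcal F$, $\psi\cdot n=a\cdot n$ on $\partial\mathcal F$, and apply the corresponding elliptic estimate to the conjugated field $e^{s\alpha}\alpha^2\psi$ so that the commutators are of strictly lower order in $s,\lambda$ and get absorbed; since $\mathcal F$ has one hole, it carries a one-dimensional space of harmonic tangent fields, whose contribution is absorbed into the local term over $\mathcal O$. This produces $s^4\lambda^4\int_{\mathcal F}e^{2s\alpha}\alpha^4|\psi|^2+s^2\lambda^2\int_{\mathcal F}e^{2s\alpha}\alpha^2|\nabla\psi|^2$ bounded by a weighted $L^2$-norm of $w$ plus the weighted $\|a_n\|^2_{H^{1/2}(\partial\mathcal F)}$ and the observation term. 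Finally $\int_{\mathcal O_\eta}e^{2s\alpha}\alpha^3w^2$ is localized in $\mathcal O$ and bounded by $\int_{\mathcal O}e^{2s\alpha}\alpha^4|\psi|^2$ by a standard cut-off (Caccioppoli) argument with cut-off supported in $\mathcal O\subset\subset\mathcal F$, so no new boundary term appears.

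It remains to absorb the term $s^3\lambda^2e^{2s}\int_{\partial\mathcal F}|\psi_\tau|^2$ still present on the right-hand side, and this is where the hypothesis $\beta>0$ is used. Multiplying $-\Delta\psi=-2\nabla\cdot D(\psi)=f$ by an appropriately weighted multiple of $\psi$ of the form $s^a\lambda^b e^{2s\alpha}\alpha^c\psi$ and integrating by parts over $\mathcal F$, the boundary integral is $-s^a\lambda^b\int_{\partial\mathcal F}e^{2s}\alpha^c\big(2D(\psi)n\big)\cdot\psi$; splitting $\psi$ into its normal and tangential parts on $\partial\mathcal F$ and inserting the tangential Navier condition $[2D(\psi)n]_\tau=b\cdot\tau-\beta\,\psi\cdot\tau$ (together with the fact that, by $\nabla\cdot\psi=0$, the normal part $[2D(\psi)n]_n$ is a combination of $\partial_\tau(\psi\cdot\tau)$ and $\kappa\,(a\cdot n)$), this boundary integral equals $\beta s^a\lambda^be^{2s}\int_{\partial\mathcal F}|\psi_\tau|^2$ plus terms dominated by $\varepsilon s^a\lambda^be^{2s}\int_{\partial\mathcal F}|\psi_\tau|^2$, by the weighted $\|a_n\|^2_{H^{1/2}(\partial\mathcal F)}$, by $s^a\lambda^be^{2s}\int_{\partial\mathcal F}|\nabla_\tau(a\cdot n)|^2$ and by $s^a\lambda^be^{2s}\int_{\partial\mathcal F}|b|^2$; the interior terms produced ($\int e^{2s\alpha}\alpha^{\bullet}|D(\psi)|^2$ and a gradient cross term) are, for a suitable choice of $a,b,c$ and $s,\lambda$ large, absorbed into $s^4\lambda^4\int e^{2s\alpha}\alpha^4|\psi|^2+s^2\lambda^2\int e^{2s\alpha}\alpha^2|\nabla\psi|^2$. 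Taking $a=3$, $b=2$ matches the left-hand side of \eqref{carlS}; combining this identity with the previous estimates, taking $s\ge s_1$, $\lambda\ge\lambda_1$ large and $\varepsilon$ small, every boundary contribution involving $\psi_\tau$ is absorbed and \eqref{carlS} follows.

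The step I expect to be the main obstacle is exactly this interplay on $\partial\mathcal F$: the trace of the vorticity $w=\operatorname{curl}\psi$, the tangential derivatives $\partial_\tau(\psi\cdot\tau)$ coming from $\nabla\cdot\psi=0$, and the boundary quantity $\int_{\partial\mathcal F}|\psi_\tau|^2$ all get entangled, and the only mechanism that lets the bookkeeping of the $s$- and $\lambda$-powers close is the good boundary term $\beta\int_{\partial\mathcal F}|\psi_\tau|^2$ coming from the Navier friction; when $\beta=0$ this term disappears and the boundary contributions can no longer be absorbed, which is precisely why $\beta>0$ is required. A secondary technical difficulty is the weighted div--curl reconstruction of $\psi$ from $w$ in a domain that is not simply connected, where one must check that neither the harmonic component nor the commutators spoil the weights.
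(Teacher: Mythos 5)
Your overall strategy (reduce to the scalar vorticity, apply a scalar Carleman estimate, then reconstruct $\psi$ by a weighted div--curl argument, and finally generate the $\beta$-boundary term by a separate energy identity) is genuinely different from the paper's proof, which never passes through $\operatorname{curl}\psi$ here: the paper conjugates the full vector field $\psi$ with the two weights $e^{\lambda\eta}$ and $e^{-\lambda\eta}$, exploits the cancellation of the boundary terms of the two conjugated estimates on $\partial\mathcal F$ (where $\alpha=\widetilde\alpha=1$), and reads the $\beta\,|\psi_\tau|^2$ term directly off the boundary algebra after inserting the Navier condition. Unfortunately, as written your plan has a gap that I do not see how to repair within your scheme.

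The decisive problem is the reconstruction of $\psi$ from $w=\operatorname{curl}\psi$. What you need is a \emph{weighted, zeroth-order} bound of the type $s^4\lambda^4\int_{\mathcal F}e^{2s\alpha}\alpha^4|\psi|^2\lesssim$ (weighted norm of $w$) $+\;\|a_n\|^2$ $+$ (local term), and no such div--curl estimate exists: in $\mathcal F=\Omega\setminus\mathcal S$, which is not simply connected, the system $\operatorname{curl}\psi=w$, $\nabla\cdot\psi=0$, $\psi\cdot n$ prescribed has a one-dimensional kernel of harmonic tangential fields, so $\|\psi\|$ is simply not controlled by $\|w\|+\|a_n\|$. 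Your proposed fix — absorbing the harmonic component ``into the local term over $\mathcal O$'' — is circular: bounding a global component of $\psi$ by its values on $\mathcal O$ is exactly the quantitative unique-continuation content that the Carleman estimate \eqref{carlS} is supposed to deliver, so it cannot be invoked as a known elliptic fact at this stage. This is also why, when the paper does use the curl (in the parabolic Section 5), the recovery inequality \eqref{eq10} it borrows keeps $\|\varphi\|^2_{[L^2(\mathcal F)]^2}$ on the right-hand side and only upgrades the gradient; the weighted zeroth-order control of the velocity there is supplied by Proposition \ref{CS} itself, i.e. by the very statement you are trying to prove.

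A second, independent issue is your intermediate estimate for $w$. If you really apply a Carleman estimate for $-\Delta$ to $w$ with non-zero boundary trace, the sign condition $\nabla\eta\cdot n<0$ kills the terms in $|\partial_n w|^2$ and $|w|^2$ but not the tangential one: the boundary algebra leaves a term of the form $s\lambda e^{2s}\int_{\partial\mathcal F}|\nabla_\tau w|^2\,d\Gamma$ on the right (this is the elliptic analogue of the first term in \eqref{2terms}, which costs the paper considerable extra work in Section 5). Through your trace identity this involves $\nabla_\tau(\psi\cdot\tau)$ on $\partial\mathcal F$, which nothing on the left of \eqref{carlS} controls, and the pointwise relation $\nabla w=f^\perp$ does not help because $f$ is only in $[L^2(\mathcal F)]^2$ and has no trace; your list $\mathcal B$ omits this term without justification. (Using only $\nabla w=f^\perp$ together with a weighted Poincar\'e inequality, as your parenthetical hints, would indeed avoid all boundary terms for $w$ — but then you obtain only vorticity bounds and the reconstruction gap above remains.) Finally, the calibration of your last step is off: testing the equation with $s^3\lambda^2e^{2s\alpha}\alpha^c\psi$ forces you to handle $s^3\lambda^2e^{2s}\int_{\mathcal F}|f|\,|\psi|$ near $\partial\mathcal F$, where $\alpha\simeq1$; any Cauchy--Schwarz split compatible with the term $s\int e^{2s\alpha}\alpha|f|^2$ of \eqref{carlS} leaves a contribution of order $s^5\lambda^4e^{2s}|\psi|^2$ there, one power of $s$ above what $s^4\lambda^4\int e^{2s\alpha}\alpha^4|\psi|^2$ can absorb. (The boundary term itself is not the deep point — it already follows from the two interior terms and a trace inequality, at the price of a $\beta$-dependent constant — but your proposed mechanism for it does not close, and the interior estimate it presupposes is exactly what is missing.)
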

\begin{Remark}
Since $\beta>0$, then for any $f\in [L^2(\mathcal{F})]^2$, $a\in [H^{3/2}(\partial\mathcal{F})]^2$, $b\in [H^{1/2}(\partial\mathcal{F})]^2$, the system \eqref{SN} admits a unique solution $\psi\in [H^2(\mathcal{F})]^2$. See \cite{BdV} for a complete proof.
\end{Remark}
\begin{proof}
The proof is inspired from \cite{MR2224824} where in our case, we need to take into account the non homogeneous Navier slip boundary conditions and thus, one need to manipulate carefully the surface integrals that appear.
	
\textbf{Step 1:}
Let  $w=e^{s\alpha}\psi$.
The first equation of the system \eqref{SN} becomes
\begin{equation}
\label{eq}
-\Delta w-s^2\lambda^2\alpha^2\left|\nabla \eta\right|^2w+2s\lambda \alpha \nabla w\nabla \eta+s\lambda^2\alpha\left|\nabla\eta\right|^2w+s\lambda \alpha \Delta \eta w =e^{s\alpha}f,
\end{equation} 
We write $\Delta w=\nabla \cdot (\nabla w+(\nabla w)^*)-\nabla (\nabla\cdot w)$.
Using that $\nabla\cdot w=s\lambda\alpha\nabla\eta\cdot w$, we get
$$
-\Delta w=-\nabla \cdot (\nabla w+(\nabla w)^*) +s\lambda\alpha (\nabla w)^* \nabla\eta+s\lambda\alpha\nabla^2\eta w+s\lambda^2\alpha(\nabla\eta\cdot w)\nabla\eta.
$$ 
Then, \eqref{eq} can be written as
\begin{multline}
\label{eqq}
-\nabla \cdot (\nabla w+(\nabla w)^*) +s\lambda\alpha (\nabla w)^*\nabla\eta-s^2\lambda^2\alpha^2\left|\nabla \eta\right|^2w+2s\lambda \alpha \nabla w\nabla \eta+s\lambda^2\alpha\left|\nabla\eta\right|^2w+s\lambda \alpha \Delta \eta w \\+s\lambda\alpha\nabla^2\eta w+s\lambda^2\alpha(\nabla\eta\cdot w)\nabla\eta=e^{s\alpha}f.
\end{multline} 
We multiply \eqref{eqq} by $\alpha^{1/2}$, then \eqref{eqq} is equivalent to	
\begin{equation}
\label{Ceq3}
Mw+Nw=\widetilde{g},
\end{equation}
where 
\begin{equation}
\label{Mw}
Mw=-\alpha^{1/2}\nabla\cdot(\nabla w+(\nabla w)^*) -s^2\lambda^2\alpha^{5/2}\left|\nabla \eta\right|^2w+s\lambda\alpha^{3/2} (\nabla w)^* \nabla\eta,
\end{equation}
\begin{equation}
\label{Nw}
Nw=2s\lambda \alpha^{3/2} \nabla w \nabla\eta+4s\lambda^2\alpha^{3/2}\left|\nabla\eta\right|^2w,
\end{equation}	
and 
\begin{equation}
\widetilde{g}=\alpha^{1/2}e^{s\alpha}f+3s\lambda^2\alpha^{3/2}\left|\nabla\eta\right|^2w-s\lambda \alpha^{3/2} \Delta \eta w-s\lambda\alpha^{3/2}\nabla^2\eta w-s\lambda^2\alpha^{3/2}(\nabla\eta\cdot w)\nabla\eta.
\end{equation}
Multiplying \eqref{Ceq3} by its self, we notice that we only need to consider the terms $\sum_{i,j}\left\langle( M w)_i,(Nw)_j \right\rangle$.
First, we have
\begin{multline}
\left\langle( M w)_1,(Nw)_1 \right\rangle=-2s\lambda\int_{\mathcal{F}}\alpha^2\nabla\cdot(\nabla w+(\nabla w)^*)\cdot\nabla w \nabla\eta  \ dy\\=-2s\lambda\int_{\partial\mathcal{F}}(\nabla\eta\cdot n)(\nabla w+(\nabla w)^*)n\cdot(\nabla w n)  \ d\Gamma+2s\lambda\int_{\mathcal{F}}\alpha^2(\nabla w \nabla^2\eta):(\nabla w+(\nabla w)^*) \ dy\\+4s\lambda^2\int_\mathcal{F} \alpha^2(\nabla w+(\nabla w)^*)\nabla\eta\cdot\nabla w \nabla\eta \ dy+2s\lambda\sum_{i,j,k}\int_{\mathcal{F}}\alpha^2 (\partial_jw_i+\partial_iw_j)\partial^2_{kj}w_i\partial_k\eta \ dy.
\end{multline}
We set
\begin{multline}
A=4s\lambda^2\int_\mathcal{F} \alpha^2(\nabla w+(\nabla w)^*)\nabla\eta\cdot\nabla w \nabla\eta \ dy=4s\lambda^2\int_\mathcal{F} \alpha^2\left|\nabla w \nabla \eta\right|^2  \ dy\\+4s\lambda^2\int_\mathcal{F}\alpha^2((\nabla w)^*\nabla\eta)\cdot(\nabla w\nabla\eta)  \ dy=A_1+A_2.
\end{multline}
We obtain
\begin{multline}
A_2=4s\lambda^2\int_{\partial\mathcal{F}}\left|\nabla\eta \right|^2(\nabla wn)\cdot n(w\cdot n) \ d\Gamma-4s\lambda^2\sum_{i,j,k}\int_{\mathcal{F}}\partial_i(\alpha^2\partial_k\eta\partial_j\eta)\partial_j w_i w_k  \ dy\\-4s^2\lambda^3\int_\mathcal{F}\alpha^2\nabla\eta\cdot\nabla(\alpha\nabla\eta\cdot w)(\nabla \eta\cdot w ) \ dy,
\end{multline}
where we have used that $\nabla\cdot w=s\lambda\alpha\nabla\eta\cdot w$. An integration by parts for the last term gives
\begin{multline}
A_2=4s\lambda^2\int_{\partial\mathcal{F}}\alpha^2\left|\nabla\eta \right|^2(\nabla wn)\cdot n(w\cdot n) \ d\Gamma-4s\lambda^2\sum_{i,j,k}\int_{\mathcal{F}}\partial_i(\alpha^2\partial_k\eta\partial_j\eta)\partial_j w_i w_k  \ dy\\+2s^2\lambda^3\int_\mathcal{F}\alpha^3\Delta\eta\left| \nabla\eta\cdot w\right| ^2  \ dy+2s^2\lambda^4\int_\mathcal{F}\left|\nabla\eta\right|^2 \alpha^3 \left|\nabla \eta \cdot w\right|^2   \ dy -2s^2\lambda^3\int_{\partial\mathcal{F}}(\nabla\eta\cdot n)\left|\nabla\eta\cdot w\right|^2 \ d\Gamma. 
\end{multline}
On the other hand, we have
\begin{multline}
B=2s\lambda\sum_{i,j,k}\int_{\mathcal{F}}\alpha^2 (\partial_jw_i+\partial_iw_j)\partial^2_{kj}w_i\partial_k\eta \ dy\\=s\lambda\int_\mathcal{F} \alpha^2\nabla\eta\cdot\nabla\left|\nabla w\right|^2  \ dy +2s\lambda\sum_{i,j,k}\int_\mathcal{F}\alpha^2 \partial_iw_j\partial^2_{kj}w_i\partial_k\eta  \ dy=B_1+B_2.
\end{multline}
An integration by parts for the terms $B_1$ and $B_2$, gives
\begin{equation}
B_1=s\lambda\int_{\partial\mathcal{F}}(\nabla\eta\cdot n)\left|\nabla w\right|^2 \ d\Gamma-s\lambda\int_{\mathcal{F}}\alpha^2\Delta \eta\left|\nabla w\right|^2  \ dy-2s\lambda^2\int_{\mathcal{F}}\left|\nabla\eta\right|^2\alpha^2\left|\nabla w\right|^2  \ dy,    
\end{equation}
\begin{multline}
B_2=2s\lambda\int_{\partial\mathcal{F}}(\nabla\eta\cdot n)\nabla w:(\nabla w)^* \ d\Gamma-2s\lambda\int_\mathcal{F} \Delta\eta \alpha^2\nabla w: (\nabla w)^* \ dy\\-4s\lambda^2\int_\mathcal{F}\alpha^2\left|\nabla\eta\right|^2 \nabla w:(\nabla w)^*  \ dy-2s\lambda\sum_{i,j,k}\int_\mathcal{F}\alpha^2 \partial^2_{ki}w_j\partial_{j}w_i\partial_k\eta  \ dy=B_{21}+B_{22}+B_{23}+B_{24}.
\end{multline}
We make an integration by parts for $B_{24}$, we get
\begin{multline}
\label{b24}
B_{24}=-2s\lambda\int_{\partial\mathcal{F}}(\nabla\eta\cdot n)(\nabla w n)\cdot((\nabla w)^*n) \ d\Gamma+2s\lambda\int_{\mathcal{F}}\alpha^2(\nabla^2\eta\nabla w):\nabla w \ dy\\+4s\lambda^2\int_\mathcal{F}\alpha^2((\nabla w)^*\nabla\eta)\cdot((\nabla w)\nabla\eta)  \ dy+2s^2\lambda^2\int_\mathcal{F}\alpha^3(\nabla^2\eta w)\cdot (\nabla w \nabla\eta)  \ dy\\+\sum_{i,j,k}s^2\lambda^3\int_{\mathcal{F}} \alpha^3 \partial_k\eta\partial_j\eta\partial_i\eta (\partial_k(w_iw_j))  \ dy+2s^2\lambda^2 \int_{\mathcal{F}}\alpha^3(\nabla w \nabla \eta)\cdot((\nabla w)^*\nabla \eta)  \ dy.
\end{multline}
We notice that the third term in \eqref{b24} corresponds to $A_2$, while the fifth term in \eqref{b24}  denoted by $B_{245}$  gives
\begin{multline*}
B_{245}=s^2\lambda^3\sum_{i,j,k}\int_{\mathcal{F}} \alpha^3 \partial_k\eta\partial_j\eta\partial_i\eta \partial_k(w_iw_j)  \ dy=-\sum_{i,j,k}s^2\lambda^3\int_{\mathcal{F}}\partial_k( \alpha^3 \partial_k\eta\partial_j\eta\partial_i\eta) (w_iw_j)  \ dy\\+s^2\lambda^3\int_{\partial\mathcal{F}} ( \nabla \eta\cdot n)^3 \left|w\cdot n\right|^2  \ d\Gamma,
\end{multline*}
we note that
$$
-\sum_{i,j,k}s^2\lambda^3\int_{\mathcal{F}}\partial_k( \alpha^3 \partial_k\eta\partial_j\eta\partial_i\eta) (w_iw_j)  \ dy\geq -C s^2\lambda^3(1+\lambda)\int_\mathcal{F} \alpha^4\left|w\right|^2   \ dy.
$$
We treat the sixth term in \eqref{b24}, we obtain 
\begin{multline}
B_{246}=2s^2\lambda^2 \int_{\mathcal{F}}\alpha^3(\nabla w \nabla \eta)\cdot((\nabla w)^*\nabla \eta)  \ dy=-2s^2\lambda^2\int_\mathcal{F}\alpha^3\partial_i(\partial_j\eta\partial_k\eta)\partial_jw_iw_k  \ dy \\-3s^2\lambda^3\sum_{i,j,k}\int_{\mathcal{F}} \alpha^3 \partial_k\eta\partial_j\eta\partial_i\eta \partial_k(w_iw_j)  \ dy-s^3\lambda^3\int_{\mathcal{F}}\alpha^2\nabla\eta \cdot\nabla \left|\alpha \nabla\eta \cdot w\right|^2  \ dy\\+2s^2\lambda^2\int_{\partial\mathcal{F}}\left|\nabla \eta \right|^2((\nabla w)^*n)\cdot n(w\cdot n) \ d\Gamma=C_1-3B_{245}+C_3+C_4. 
\end{multline}
\begin{multline}
C_3=s^3\lambda^3\int_\mathcal{F}\alpha^4\Delta\eta \left|w\cdot\nabla \eta\right|^2  \ dy+2s^3\lambda^4\int_\mathcal{F}\alpha^4\left|\nabla\eta\right|^2\left|w\cdot\nabla\eta\right|^2  \ dy-s^3\lambda^3\int_{\partial\mathcal{F}}(\nabla\eta\cdot n)^3\left|w\cdot n\right|^2 \ d\Gamma. 
\end{multline}
Thus, we obtain
\begin{multline}
\left\langle( M w)_1,(Nw)_1 \right\rangle\geq -2s\lambda\int_{\partial\mathcal{F}}(\nabla\eta\cdot n)(\nabla w+(\nabla w)^*)n\cdot(\nabla w n)  \ d\Gamma+s\lambda\int_{\partial\mathcal{F}}(\nabla\eta\cdot n)\left|\nabla w\right|^2 \ d\Gamma\\+2s\lambda\int_{\partial\mathcal{F}}(\nabla\eta\cdot n)\nabla w:(\nabla w)^* \ d\Gamma-2s\lambda\int_{\partial\mathcal{F}}(\nabla\eta\cdot n)(\nabla w n)\cdot((\nabla w)^*n) \ d\Gamma\\+8s\lambda^2\int_{\partial\mathcal{F}}\left|\nabla\eta \right|^2(\nabla wn)\cdot n(w\cdot n) \ d\Gamma+2s^2\lambda^2\int_{\partial\mathcal{F}}\left|\nabla \eta \right|^2 ((\nabla w)^*n)\cdot n(w\cdot n) \ d\Gamma\\-6s^2\lambda^3\int_{\partial\mathcal{F}}(\nabla\eta\cdot n)\left|\nabla\eta\cdot w\right|^2 \ d\Gamma-s^3\lambda^3\int_{\partial\mathcal{F}}(\nabla\eta\cdot n)^3\left|w\cdot n\right|^2 \ d\Gamma\\+2s^3\lambda^4\int_\mathcal{F}\left|\nabla\eta\right|^2\alpha^4\left|w\cdot\nabla\eta\right|^2  \ dy-4s\lambda^2\int_\mathcal{F}\alpha^2\left|\nabla\eta\right|^2 \nabla w:(\nabla w)^*  \ dy-2s\lambda^2\int_{\mathcal{F}}\left|\nabla\eta\right|^2\alpha^2\left|\nabla w\right|^2  \ dy\\-\varepsilon\left( s\lambda^2\int_{\mathcal{F}}\alpha^2\left|\nabla w\right|^2  \ dy+s^3\lambda^4\int_{\mathcal{F}}\alpha^4\left| w\right|^2  \ dy\right) ,
\end{multline}
for all $\varepsilon>0$ and for $s>1$, $\lambda>1$.
We get also
\begin{multline}
\left\langle(Mw)_1,(Nw)_2\right\rangle=-4s\lambda^2\int_\mathcal{F}\left|\nabla\eta\right|^2\alpha^2\left[ \nabla\cdot(\nabla w+(\nabla w)^*)\right] \cdot w \ dy\\=-4s\lambda^2\int_{\partial\mathcal{F}}\left|\nabla\eta \right|^2((\nabla w)+(\nabla w)^*)n\cdot w  \ d\Gamma+8s\lambda^2 \int_\mathcal{F}\alpha^2(\nabla^2\eta\nabla\eta)(\nabla w+(\nabla w)^*)\cdot w \ dy \\+8s\lambda^3\int_\mathcal{F}\left|\nabla\eta\right|^2\alpha^2(\nabla\eta(\nabla w+(\nabla w)^*))\cdot w \ dy+4s\lambda^2\int_\mathcal{F}\left|\nabla\eta\right|^2\alpha^2\left|\nabla w\right|^2  \ dy\\+4s\lambda^2\int_\mathcal{F} \left|\nabla\eta\right|^2\alpha^2\nabla w:(\nabla w )^*  \ dy.  
\end{multline}
On the other hand, we have
\begin{multline}
\left\langle(Mw)_2,(Nw)_1\right\rangle=-s^3\lambda^3\int_\mathcal{F}\left|\nabla\eta\right|^2\alpha^4\nabla\eta\cdot\nabla\left|w\right|^2  \ dy=-s^3\lambda^3\int_{\partial\mathcal{F}}(\nabla\eta\cdot n )^3\left| w\right|^2   d\Gamma \\+s^3\lambda^3\int_\mathcal{F}\nabla\cdot(\left|\nabla\eta\right|^2\nabla\eta)\alpha^4\left|  w\right| ^2 \ dy+4s^3\lambda^4\int_\mathcal{F}\left|\nabla\eta\right|^4\alpha^4\left|w\right|^2  \ dy.
\end{multline}
We get
\begin{equation}
\left\langle(Mw)_2,(Mw)_2\right\rangle=-4s^3\lambda^4\int_\mathcal{F}\left|\nabla\eta\right|^4\alpha^4\left|w\right|^2  \ dy.
\end{equation}
We obtain
\begin{multline}
\left\langle (Mw)_3,(Nw)_1 \right\rangle=B_{246}\geq  -3s^2\lambda^3\int_{\partial\mathcal{F}}(\nabla\eta\cdot n)^3 \left|w \cdot n\right|^2 \ d\Gamma+2s^2\lambda^2\int_{\partial\mathcal{F}}\left|\nabla \eta \right|^2 ((\nabla w)^*n)\cdot n(w\cdot n) \ d\Gamma
\\+2s^3\lambda^4\int_\mathcal{F}\left|\nabla\eta\right|^2\alpha^4\left|w\cdot\nabla\eta\right|^2  \ dy-s^3\lambda^3\int_{\partial\mathcal{F}}(\nabla\eta\cdot n)^3\left|w\cdot n\right|^2 \ d\Gamma\\-\varepsilon\left( s\lambda^2\int_{\mathcal{F}}\alpha^2\left|\nabla w\right|^2  \ dy+s^3\lambda^4\int_{\mathcal{F}}\alpha^4\left| w\right|^2  \ dy\right). 
\end{multline}
We have also
\begin{multline}
\left\langle (M w)_3,(Nw)_2 \right\rangle =4s^2\lambda^3\int_\mathcal{F}\left|\nabla\eta\right|^2\alpha^3(\nabla w)^*\nabla\eta\cdot w  \ dy=4s^2\lambda^3\int_{\partial\mathcal{F}}(\nabla\eta\cdot n)^3\left|w\cdot n \right|^2  \ d\Gamma\\-4s^2\lambda^3\int_\mathcal{F}(\nabla(\left|\nabla \eta \right|^2\alpha^3\nabla\eta )\cdot w)\cdot w  \ dy -4s^3\lambda^4\int_\mathcal{F}\left|\nabla\eta\right|^2\alpha^4\left|w\cdot\nabla\eta\right|^2  \ dy.
\end{multline}
Then, we get
\begin{multline}
\label{step2}
\langle(Mw),(Nw)\rangle\geq -2s\lambda\int_{\partial\mathcal{F}}(\nabla\eta\cdot n)(\nabla w+(\nabla w)^*)n\cdot(\nabla w n)  \ d\Gamma+s\lambda\int_{\partial\mathcal{F}}(\nabla\eta\cdot n)\left|\nabla w\right|^2 \ d\Gamma\\+2s\lambda\int_{\partial\mathcal{F}}(\nabla\eta\cdot n)\nabla w:(\nabla w)^* \ d\Gamma-2s\lambda\int_{\partial\mathcal{F}}(\nabla\eta\cdot n)(\nabla w n)\cdot((\nabla w)^*n) \ d\Gamma\\-4s\lambda^2\int_{\partial\mathcal{F}}\left|\nabla\eta \right|^2(\nabla wn+(\nabla w)^*n)_\tau\cdot w_\tau \ d\Gamma+4s^2\lambda^2\int_{\partial\mathcal{F}}\left|\nabla \eta \right|^2 ((\nabla w)^*n)\cdot n(w\cdot n) \ d\Gamma\\-2s^3\lambda^3\int_{\partial\mathcal{F}}(\nabla\eta\cdot n)^3\left|w\cdot n\right|^2 \ d\Gamma-s^3\lambda^3\int_{\partial\mathcal{F}}(\nabla\eta\cdot n )^3\left| w\right|^2   d\Gamma\\+2s\lambda^2\int_{\mathcal{F}}\left|\nabla\eta\right|^2\alpha^2\left|\nabla w\right|^2  \ dy-5s^2\lambda^3\int_{\partial\mathcal{F}}(\nabla\eta\cdot n)^3\left|w\cdot n\right|^2 \ d\Gamma\\-\varepsilon\left( s\lambda^2\int_{\mathcal{F}}\alpha^2\left|\nabla w\right|^2  \ dy+s^3\lambda^4\int_{\mathcal{F}}\alpha^4\left| w\right|^2  \ dy\right) ,
\end{multline}
where we have used 
\begin{multline*}
-4s\lambda^2\int_{\partial\mathcal{F}}\left|\nabla\eta \right|^2((\nabla w)+(\nabla w)^*)n\cdot w  \ d\Gamma+8s\lambda^2 \int_{\partial\mathcal{F}}\left|\nabla\eta \right|^2(\nabla w)^*n\cdot n (w\cdot n)  \ d\Gamma\\=-4s\lambda^2\int_{\partial\mathcal{F}}\left|\nabla\eta \right|^2(((\nabla w)+(\nabla w)^*)n)_\tau\cdot w_\tau  \ d\Gamma.
\end{multline*}
\textbf{Step 2:} We derive a Carleman estimate for $\widetilde{w}=e^{s\widetilde{\alpha}}\psi$ with $\widetilde{\alpha}=e^{-\lambda\eta}$, the calculus will be analogous and we will get the same terms up to a sign. 
We obtain
\begin{multline}
\label{step3}
\langle(\widetilde{M}\widetilde{w}),(\widetilde{N}\widetilde{w})\rangle\geq 2s\lambda\int_{\partial\mathcal{F}}(\nabla\eta\cdot n)(\nabla \widetilde{w}+(\nabla \widetilde{w})^*)n\cdot(\nabla \widetilde{w} n)  \ d\Gamma-s\lambda\int_{\partial\mathcal{F}}(\nabla\eta\cdot n)\left|\nabla \widetilde{w}\right|^2 \ d\Gamma\\-2s\lambda\int_{\partial\mathcal{F}}(\nabla\eta\cdot n)\nabla \widetilde{w}:(\nabla \widetilde{w})^* \ d\Gamma+2s\lambda\int_{\partial\mathcal{F}}(\nabla\eta\cdot n)(\nabla \widetilde{w} n)\cdot((\nabla \widetilde{w})^*n) \ d\Gamma\\-4s\lambda^2\int_{\partial\mathcal{F}}\left|\nabla\eta \right|^2(\nabla \widetilde{w}n+(\nabla \widetilde{w})^*n)_\tau\cdot \widetilde{w}_\tau \ d\Gamma+4s^2\lambda^2\int_{\partial\mathcal{F}}\left|\nabla \eta \right|^2 ((\nabla \widetilde{w})^*n)\cdot n(\widetilde{w}\cdot n) \ d\Gamma\\+2s^3\lambda^3\int_{\partial\mathcal{F}}(\nabla\eta\cdot n)^3\left|\widetilde{w}\cdot n\right|^2 \ d\Gamma+s^3\lambda^3\int_{\partial\mathcal{F}}(\nabla\eta\cdot n )^3\left| \widetilde{w}\right|^2   d\Gamma\\+2s\lambda^2\int_{\mathcal{F}}\left|\nabla\eta\right|^2\widetilde{\alpha}^2\left|\nabla \widetilde{w}\right|^2  \ dy+5s^2\lambda^3\int_{\partial\mathcal{F}}(\nabla\eta\cdot n)^3\left|\widetilde{w}\cdot n\right|^2 \ d\Gamma\\-\varepsilon\left( s\lambda^2\int_{\mathcal{F}}\widetilde{\alpha}^2\left|\nabla \widetilde{w}\right|^2  \ dy+s^3\lambda^4\int_{\mathcal{F}}\widetilde{\alpha}^4\left| \widetilde{w}\right|^2  \ dy\right).
\end{multline}
\textbf{Step 3:}
We deal with the surface integrals. We note that on the boundary $\partial\mathcal{F}$, we have
\begin{equation}
\label{b1}
\nabla w_i=e^{s}(\nabla\psi+s\lambda\nabla\eta\psi_i),\quad
\nabla\widetilde{w}_i=e^{s}(\nabla\psi-s\lambda\nabla\eta\psi_i),\quad \text{on }\partial\mathcal{F}.
\end{equation}
Then,
\begin{equation}
\label{b2}
\nabla wn=e^{s}(\nabla\psi n+s\lambda(\nabla\eta\cdot n)\psi),\quad\nabla \widetilde{w}n=e^{s}(\nabla\psi n-s\lambda(\nabla\eta\cdot n)\psi),\quad \text{on }\partial\mathcal{F},
\end{equation}
and
\begin{equation}
\label{b3}
(\nabla w)^*n=e^{s}((\nabla\psi)^* n+s\lambda(\psi\cdot n)\nabla\eta),\quad (\nabla\widetilde{w})^*n=e^{s}((\nabla\psi)^* n-s\lambda(\psi\cdot n)\nabla\eta),\quad \text{on }\partial\mathcal{F}.
\end{equation}
The boundary terms in \eqref{step2} are reduced to
\begin{multline*}
S=-2s\lambda\int_{\partial\mathcal{F}}(\nabla\eta\cdot n)(\nabla w+(\nabla w)^*)n\cdot(\nabla w n)  \ d\Gamma+s\lambda\int_{\partial\mathcal{F}}(\nabla\eta\cdot n)\left|\nabla w\right|^2 \ d\Gamma\\+2s\lambda\int_{\partial\mathcal{F}}(\nabla\eta\cdot n)\nabla w:(\nabla w)^* \ d\Gamma-2s\lambda\int_{\partial\mathcal{F}}(\nabla\eta\cdot n)(\nabla w n)\cdot((\nabla w)^*n) \ d\Gamma\\-4s\lambda^2\int_{\partial\mathcal{F}}\left|\nabla\eta \right|^2(\nabla wn+(\nabla w)^*n)_\tau\cdot w_\tau \ d\Gamma+4s^2\lambda^2\int_{\partial\mathcal{F}}\left|\nabla \eta \right|^2 ((\nabla w)^*n)\cdot n(w\cdot n) \ d\Gamma\\-2s^3\lambda^3\int_{\partial\mathcal{F}}(\nabla\eta\cdot n)^3\left|w\cdot n\right|^2 \ d\Gamma-s^3\lambda^3\int_{\partial\mathcal{F}}(\nabla\eta\cdot n )^3\left| w\right|^2  d\Gamma-5s^2\lambda^3\int_{\partial\mathcal{F}}(\nabla\eta\cdot n)^3\left|w\cdot n\right|^2 \ d\Gamma.
\end{multline*}
The boundary terms in \eqref{step3} write
\begin{multline*}
\widetilde{S}=2s\lambda\int_{\partial\mathcal{F}}(\nabla\eta\cdot n)(\nabla \widetilde{w}+(\nabla \widetilde{w})^*)n\cdot(\nabla \widetilde{w} n)  \ d\Gamma-s\lambda\int_{\partial\mathcal{F}}(\nabla\eta\cdot n)\left|\nabla \widetilde{w}\right|^2 \ d\Gamma\\-2s\lambda\int_{\partial\mathcal{F}}(\nabla\eta\cdot n)\nabla \widetilde{w}:(\nabla \widetilde{w})^* \ d\Gamma+2s\lambda\int_{\partial\mathcal{F}}(\nabla\eta\cdot n)(\nabla \widetilde{w} n)\cdot((\nabla \widetilde{w})^*n) \ d\Gamma\\-4s\lambda^2\int_{\partial\mathcal{F}}\left|\nabla\eta \right|^2(\nabla \widetilde{w}n+(\nabla \widetilde{w})^*n)_\tau\cdot \widetilde{w}_\tau \ d\Gamma+4s^2\lambda^2\int_{\partial\mathcal{F}}\left|\nabla \eta \right|^2 ((\nabla \widetilde{w})^*n)\cdot n(\widetilde{w}\cdot n) \ d\Gamma\\+2s^3\lambda^3\int_{\partial\mathcal{F}}(\nabla\eta\cdot n)^3\left|\widetilde{w}\cdot n\right|^2 \ d\Gamma+s^3\lambda^3\int_{\partial\mathcal{F}}(\nabla\eta\cdot n )^3\left| \widetilde{w}\right|^2   d\Gamma d\Gamma+5s^2\lambda^3\int_{\partial\mathcal{F}}(\nabla\eta\cdot n)^3\left|\widetilde{w}\cdot n\right|^2 \ d\Gamma.
\end{multline*}
Using that $\alpha=\widetilde{\alpha}=1$ and $w=\widetilde{w}$ on $\partial\mathcal{F}$, the boundary terms are reduced to
\begin{multline*}
S+\widetilde{S}=-2s\lambda\left( \int_{\partial\mathcal{F}}(\nabla\eta\cdot n)(\nabla w+(\nabla w)^*)n\cdot(\nabla w n)  \ d\Gamma- \int_{\partial\mathcal{F}}(\nabla\eta\cdot n)(\nabla \widetilde{w}+(\nabla \widetilde{w})^*)n\cdot(\nabla \widetilde{w} n)  \ d\Gamma\right)\\ +s\lambda\left( \int_{\partial\mathcal{F}}(\nabla\eta\cdot n)\left|\nabla w\right|^2 \ d\Gamma- \int_{\partial\mathcal{F}}(\nabla\eta\cdot n)\left|\nabla \widetilde{w}\right|^2 \ d\Gamma\right)\\+2s\lambda\left( \int_{\partial\mathcal{F}}(\nabla\eta\cdot n)\nabla w:(\nabla w)^* \ d\Gamma- \int_{\partial\mathcal{F}}(\nabla\eta\cdot n)\nabla \widetilde{w}:(\nabla \widetilde{w})^* \ d\Gamma\right)\\ -2s\lambda\left( \int_{\partial\mathcal{F}}(\nabla\eta\cdot n)(\nabla w n)\cdot((\nabla w)^*n) \ d\Gamma- \int_{\partial\mathcal{F}}(\nabla\eta\cdot n)(\nabla \widetilde{w} n)\cdot((\nabla \widetilde{w})^*n) \ d\Gamma\right)\\ -4s\lambda^2\left( \int_{\partial\mathcal{F}}\left|\nabla\eta \right|^2(\nabla wn+(\nabla w)^*n)_\tau\cdot w_\tau \ d\Gamma+ \int_{\partial\mathcal{F}}\left|\nabla\eta \right|^2(\nabla \widetilde{w}n+(\nabla \widetilde{w})^*n)_\tau\cdot \widetilde{w}_\tau \ d\Gamma\right)\\ +4s^2\lambda^2\left( \int_{\partial\mathcal{F}}\left|\nabla \eta \right|^2 ((\nabla w)^*n)\cdot n(w\cdot n) \ d\Gamma+ \int_{\partial\mathcal{F}}\left|\nabla \eta \right|^2((\nabla \widetilde{w})^*n)\cdot n(\widetilde{w}\cdot n) \ d\Gamma\right)=\sum_{i=1}^6 I_i. 
\end{multline*}
Using \eqref{b1}, \eqref{b2}, \eqref{b3} and \eqref{SN}$_4$, we get
\begin{equation*}
I_1+I_4=-8s^2\lambda^2\int_{\partial\mathcal{F}}e^{2s}\left|\nabla \eta \right|^2(\nabla\psi n+(\nabla\psi)^* n )\cdot\psi \ d\Gamma-8s^2\lambda^2\int_{\partial\mathcal{F}}e^{2s}\left|\nabla \eta \right|^2(\nabla\psi n\cdot n)(\psi\cdot n) \ d\Gamma.
\end{equation*}
\begin{equation*}
I_2=4s^2\lambda^2\int_{\partial\mathcal{F}}e^{2s}\left|\nabla \eta \right|^2\nabla\psi n \cdot\psi \ d\Gamma,
\end{equation*}
\begin{equation*}
I_3=8s^2\lambda^2\int_{\partial\mathcal{F}}e^{2s}\left|\nabla \eta \right|^2(\nabla\psi)^* n \cdot\psi \ d\Gamma,
\end{equation*}
\begin{equation*}
I_5=8\beta s\lambda^2\int_{\partial\mathcal{F}}e^{2s}\left|\nabla \eta \right|^2\left|\psi_\tau\right|^2 \ d\Gamma-8 s\lambda^2\int_{\partial\mathcal{F}}e^{2s}\left|\nabla \eta \right|^2b\cdot \psi \ d\Gamma,
\end{equation*}
\begin{equation*}
I_6=8s^2\lambda^2\int_{\partial\mathcal{F}}e^{2s}\left|\nabla \eta \right|^2(\nabla\psi n\cdot n)(\psi\cdot n) \ d\Gamma.
\end{equation*}
Then, we get
\begin{multline*}
I_1+I_4=-8s^2\lambda^2\int_{\partial\mathcal{F}}e^{2s}\left|\nabla \eta \right|^2(\nabla\psi n+(\nabla\psi)^* n )_\tau\cdot\psi_\tau \ d\Gamma-24s^2\lambda^2\int_{\partial\mathcal{F}}e^{2s}\left|\nabla \eta \right|^2(\nabla\psi n\cdot n)(\psi\cdot n) \ d\Gamma\\=8\beta s^2\lambda^2\int_{\partial\mathcal{F}}e^{2s}\left|\nabla \eta \right|^2\left|\psi_\tau\right|^2 \ d\Gamma-8 s^2\lambda^2\int_{\partial\mathcal{F}}e^{2s}\left|\nabla \eta \right|^2b\cdot \psi \ d\Gamma-24s^2\lambda^2\int_{\partial\mathcal{F}}e^{2s}\left|\nabla \eta \right|^2(\nabla\psi n\cdot n)(\psi\cdot n) \ d\Gamma,
\end{multline*}
and
\begin{multline*}
I_2=4s^2\lambda^2\int_{\partial\mathcal{F}}e^{2s}\left|\nabla \eta \right|^2(\nabla\psi n)_\tau \cdot\psi_\tau \ d\Gamma+4s^2\lambda^2\int_{\partial\mathcal{F}}e^{2s}\left|\nabla \eta \right|^2(\nabla\psi n)\cdot n (\psi\cdot n) \ d\Gamma\\=-4s^2\lambda^2\int_{\partial\mathcal{F}}e^{2s}\left|\nabla \eta \right|^2(\nabla\psi)^*n_\tau \cdot\psi_\tau \ d\Gamma-4s^2\lambda^2\beta\int_{\partial\mathcal{F}}e^{2s}\left| \psi_\tau\right|^2  \ d\Gamma\\+4s^2\lambda^2\int_{\partial\mathcal{F}}e^{2s}\left|\nabla \eta \right|^2b_\tau \cdot\psi_\tau \ d\Gamma+4s^2\lambda^2\int_{\partial\mathcal{F}}e^{2s}\left|\nabla \eta \right|^2(\nabla\psi n)\cdot n (\psi\cdot n) \ d\Gamma.
\end{multline*}
We notice that
$$
-4s^2\lambda^2\int_{\partial\mathcal{F}}e^{2s}\left|\nabla \eta \right|^2\left[ (\nabla\psi)^*n\right] _\tau \cdot\psi_\tau \ d\Gamma=-4s^2\lambda^2\int_{\partial\mathcal{F}}e^{2s}\left|\nabla \eta \right|^2\left[  \nabla_\tau (a\cdot n)\cdot \psi_\tau-(\nabla n\tau\cdot \psi)(\psi\cdot\tau)\right] \ d\Gamma.
$$
Using the inequality in \cite[Theorem II.4.1]{galdi} with $r=2$, $q=2$, we obtain
$$
s^2\lambda^2\int_{\partial\mathcal{F}}\left|\psi\right|^2 \ d\Gamma \leq C \left( s\lambda \int_\mathcal{F} \left|\nabla \psi\right|^2 \ dy + s^3\lambda^3\int_\mathcal{F}\left|\psi\right|^2  \ dy\right) .
$$
Applying the same arguments, we get for $I_3$
\begin{multline*}
I_3=8s^2\lambda^2\int_{\partial\mathcal{F}}e^{2s}\left|\nabla \eta \right|^2((\nabla\psi)^* n)_\tau \cdot\psi_\tau \ d\Gamma+8s^2\lambda^2\int_{\partial\mathcal{F}}e^{2s}\left|\nabla \eta \right|^2(\nabla\psi n)\cdot n (\psi\cdot n) \ d\Gamma\\=8s^2\lambda^2\int_{\partial\mathcal{F}}e^{2s}\left|\nabla \eta \right|^2\left[  \nabla_\tau (a\cdot n)\cdot \psi_\tau-(\nabla n\tau\cdot \psi)(\psi\cdot\tau)\right] \ d\Gamma+8s^2\lambda^2\int_{\partial\mathcal{F}}e^{2s}\left|\nabla \eta \right|^2(\nabla\psi n)\cdot n (\psi\cdot n) \ d\Gamma.
\end{multline*}
Then, combining all these inequalities, we get 
\begin{multline*}
s\lambda^2\int_{\mathcal{F}}e^{2s\alpha}\alpha^2\left| \nabla\eta\right|^2 \left|\nabla \psi\right|^2   \ dy +s^2\lambda^2\beta e^{2s}\int_{\partial\mathcal{F}}\left|\psi_\tau\right|^2d\Gamma\\\leq  \varepsilon s^3\lambda^4\int_{\mathcal{F}}e^{2s\alpha} \alpha^4\left|\psi \right|^2  \ dy +\varepsilon s\lambda^2\int_{\mathcal{F}}e^{2s\alpha} \alpha^2 \left|\nabla \psi \right|^2  \ dy+C\bigg(\int_{\mathcal{F}} e^{2s\alpha}\alpha\left| f\right|^2  \ dy +s^2\lambda^2e^{2s}\int_{\partial\mathcal{F}}\left| \nabla (a\cdot n)\tau\right| ^2 d\Gamma\\+s^2\lambda^2e^{2s}\int_{\partial\mathcal{F}} \left|b\right|^2 \ d\Gamma\bigg)+4s^2\lambda^2e^{2s}\int_{\partial\mathcal{F}}\left|\nabla \eta \right|^2(\nabla\psi n\cdot n)(\psi\cdot n) \ d\Gamma.
\end{multline*}
Using the fact that $\left| \nabla\eta\right|>0 $ on $\mathcal{F} \backslash \mathcal{O}_\eta$, we obtain 
\begin{multline*}
s\lambda^2\int_{\mathcal{F}\backslash\mathcal{O}_\eta}e^{2s\alpha}\alpha^2 \left|\nabla \psi\right|^2   \ dy +s^2\lambda^2\beta e^{2s}\int_{\partial\mathcal{F}}\left|\psi_\tau\right|^2d\Gamma\\\leq   \varepsilon s^3\lambda^4\int_{\mathcal{F}}e^{2s\alpha} \alpha^4\left|\psi \right|^2  \ dy +\varepsilon s\lambda^2\int_{\mathcal{F}}e^{2s\alpha} \alpha^2 \left|\nabla \psi \right|^2  \ dy+C\bigg(\int_{\mathcal{F}} e^{2s\alpha}\alpha\left| f\right|^2  \ dy +s^2\lambda^2\int_{\partial\mathcal{F}}e^{2s}\left| \nabla (a\cdot n)\tau\right| ^2 d\Gamma\\+s^2\lambda^2e^{2s}\int_{\partial\mathcal{F}} \left|b\right|^2 \ d\Gamma\bigg)+4s^2\lambda^2e^{2s}\int_{\partial\mathcal{F}}\left|\nabla \eta \right|^2(\nabla\psi n\cdot n)(\psi\cdot n) \ d\Gamma.
\end{multline*}
We add the term $s\lambda^2\int_{\mathcal{O}_\eta}e^{2s\alpha}\alpha^2 \left|\nabla \psi\right|^2   \ dy$ in the both sides of the last equation, to get
\begin{multline*}
s\lambda^2\int_{\mathcal{F}}e^{2s\alpha}\alpha^2 \left|\nabla \psi\right|^2   \ dy +s^2\lambda^2\beta\int_{\partial\mathcal{F}}e^{2s}\left|\psi_\tau\right|^2d\Gamma \\\leq  \varepsilon s^3\lambda^4\int_{\mathcal{F}}e^{2s\alpha} \alpha^4\left|\psi \right|^2  \ dy +\varepsilon s\lambda^2\int_{\mathcal{F}}e^{2s\alpha} \alpha^2 \left|\nabla \psi \right|^2  \ dy+C\bigg(\int_{\mathcal{F}} e^{2s\alpha}\alpha\left| f\right|^2  \ dy +s^2\lambda^2e^{2s}\int_{\partial\mathcal{F}}\left| \nabla (a\cdot n)\tau\right| ^2 d\Gamma\\+s^2\lambda^2e^{2s}\int_{\partial\mathcal{F}}\left|b\right|^2 \ d\Gamma\bigg)+4s^2\lambda^2e^{2s}\int_{\partial\mathcal{F}}\left|\nabla \eta \right|^2(\nabla\psi n\cdot n)(\psi\cdot n) \ d\Gamma.
\end{multline*}
We use the following inequality that is proved in \cite[Lemma 3]{MR2503028}
\begin{equation}
s^3\lambda^4\int_\mathcal{F} e^{2s\alpha}\alpha^4\left|\psi\right|^2   \ dy\leq C \left( 	s\lambda^2\int_{\mathcal{F}}e^{2s\alpha}\alpha^2 \left|\nabla \psi\right|^2   \ dy+s^3\lambda^4\int_{\mathcal{O}_\eta}e^{2s\alpha}\alpha^4\left| \psi\right|^2   \ dy \right). 
\end{equation}
Then, we get for sufficiently large $s$ and $\lambda$
\begin{multline}
\label{eq18}
s\lambda^2\int_{\mathcal{F}}e^{2s\alpha}\alpha^2 \left|\nabla \psi\right|^2   \ dy	+s^3\lambda^4\int_\mathcal{F} e^{2s\alpha}\alpha^4\left|\psi\right|^2  \ dy +s^2\lambda^2\beta e^{2s}\int_{\partial\mathcal{F}}\left|\psi_\tau\right|^2d\Gamma \\\leq C\bigg(s^3\lambda^4\int_{\mathcal{O}_\eta}e^{2s\alpha}\alpha^4\left| \psi\right|^2   \ dy+ s\lambda^2\int_{\mathcal{O}_\eta}e^{2s\alpha} \alpha^2 \left|\nabla \psi \right|^2  \ dy+\int_{\mathcal{F}} e^{2s\alpha}\alpha\left| f\right|^2  \ dy +s^2\lambda^2e^{2s}\int_{\partial\mathcal{F}}\left| \nabla (a\cdot n)\tau\right| ^2 d\Gamma\\+s^2\lambda^2e^{2s}\int_{\partial\mathcal{F}} \left|b\right|^2 \ d\Gamma\bigg)+4s^2\lambda^2e^{2s}\int_{\partial\mathcal{F}}\left|\nabla \eta \right|^2(\nabla\psi n\cdot n)(\psi\cdot n) \ d\Gamma.
\end{multline}
We recall that 
$$
\nabla\times \psi=\frac{\partial\psi_2}{\partial y_1}-\frac{\partial\psi_1}{\partial y_2} .
$$
Since $\psi$ is divergence free, we have that 
$$
\Delta\psi=-\nabla\times(\nabla\times \psi).
$$
We have used the fact that for any scalar function $a:\mathbb{R}^2\longrightarrow \mathbb{R}$, we have 
$$\nabla\times a=\begin{pmatrix}
\partial_2 a\\-\partial_1 a
\end{pmatrix}.
$$
We recall the Green formula
$$
\int_\mathcal{F} \Delta \psi\cdot\widehat{v} \ dy=-\int_\mathcal{F} (\nabla\times\psi)\cdot(\nabla \times \widehat{v}) \ dy -\int_{\partial\mathcal{F}}(\widehat{v}\cdot\tau)(\nabla\times\psi)\ d\Gamma,\quad \forall\widehat{v}\in [H^1(\mathcal{F})]^2,
$$
with $\tau=\begin{pmatrix}
-n_2\\ n_1
\end{pmatrix}$.
Then, we obtain
\begin{multline*}
\int_{\partial\mathcal{F}}\left|\nabla \eta \right|^2(\nabla\psi n\cdot n)(\psi\cdot n)\ d\Gamma =\int_{\partial\mathcal{F}}\left|\nabla \eta \right|^2\nabla\psi n\cdot\psi_n\ d\Gamma\\=\int_\mathcal{F} \nabla \psi : \nabla \widehat{v}  \ dy-\int_\mathcal{F} (\nabla\times\psi) \cdot (\nabla \times \widehat{v} ) \ dy-\int_{\partial\mathcal{F}}(\widehat{v}\cdot\tau)(\nabla\times\psi)\ d \Gamma,
\end{multline*}
where $\widehat{v}$ is a $[H^1(\mathcal{F})]^2$ such that $\widehat{v}=\left|\nabla \eta \right|^2\psi_n$ on $\partial\mathcal{F}$ and
$$
\left\|\widehat{v}\right\|_{[H^1(\mathcal{F})]^2} \leq C \left\|\psi_n\right\|_{[H^{1/2}(\partial\mathcal{F})]^2} .
$$ 
Thus, the last term in the right hand side of the inequality \eqref{eq18} gives
\begin{equation*}
s^2\lambda^2e^{2s}\int_{\partial\mathcal{F}}\left|\nabla \eta \right|^2(\nabla\psi n\cdot n)(\psi\cdot n)\ d\Gamma \leq  \varepsilon s\lambda
^2e^{2s}\int_\mathcal{F} \left|\nabla \psi\right|^2  \ dy  + Cs^3\lambda^2e^{2s}\left\|\psi_n\right\|^2_{[H^{1/2}(\partial\mathcal{F})]^2}.
\end{equation*}
Then, we obtain
\begin{multline*}
s\lambda^2\int_{\mathcal{F}}e^{2s\alpha}\alpha^2 \left|\nabla \psi\right|^2   \ dy	+s^3\lambda^4\int_\mathcal{F} e^{2s\alpha}\alpha^4\left|\psi\right|^2  \ dy +s^2\lambda^2\beta e^{2s}\int_{\partial\mathcal{F}}\left|\psi_\tau\right|^2d\Gamma \\\leq C\bigg(s^3\lambda^4\int_{\mathcal{O}_\eta}e^{2s\alpha}\alpha^4\left| \psi\right|^2   \ dy+ s\lambda^2\int_{\mathcal{O}_\eta} e^{2s\alpha}\alpha^2 \left|\nabla \psi \right|^2  \ dy+\int_{\mathcal{F}} e^{2s\alpha}\alpha\left| f\right|^2  \ dy +s^2\lambda^2e^{2s}\int_{\partial\mathcal{F}}\left| \nabla (a\cdot n)\tau\right| ^2 d\Gamma\\+s^3\lambda^2e^{2s}\left\|\psi_n\right\|^2_{[H^{1/2}(\partial\mathcal{F})]^2}+s^2\lambda^2e^{2s}\int_{\partial\mathcal{F}} \left|b\right|^2 \ d\Gamma\bigg).
\end{multline*}
To adsorb the second term of the right hand side, we proceed like \cite[inequality (1.62)]{FS} which shows that the integral of $e^{2s\alpha}\alpha^2\left| \nabla \psi\right|^2 $  over $\mathcal{O}_\eta$ can be estimated by $e^{2s\alpha}\alpha^4\left| \psi\right|^2 $ over a larger set $\mathcal{O}$.
	
Indeed, we define $\theta\in C^2_0(\overline{\mathcal{O}})$ such that $\theta\equiv1$ in $\mathcal{O}_\eta$ and $0\leq\theta\leq1$. 
We obtain
\begin{equation*}
s\lambda^2\int_{\mathcal{O}_\eta}e^{2s\alpha}\alpha^2\left|\nabla\psi \right|^2  \ dy\leq  s\lambda^2\int_{\mathcal{O}}e^{2s\alpha}\theta\alpha^2\left|\nabla\psi \right|^2  \ dy,
\end{equation*} 
whence
\begin{multline*}
s\lambda^2\int_{\mathcal{O}}e^{2s\alpha}\theta\alpha^2\left|\nabla\psi \right|^2  \ dy=-s\lambda^2\int_{\mathcal{O}}e^{2s\alpha}\theta\alpha^2\Delta\psi\cdot\psi  \ dy-s\lambda^2\int_{\mathcal{O}}e^{2s\alpha}\alpha^2(\nabla\theta\cdot\nabla)\psi\cdot\psi \ dy\\-2s\lambda^3\int_{\mathcal{O}}e^{2s\alpha}\theta\alpha^2(\nabla\eta\cdot\nabla)\psi\cdot\psi  \ dy-2s^2\lambda^3\int_{\mathcal{O}}e^{2s\alpha}\theta\alpha^3\nabla\eta\nabla\psi\cdot\psi  \ dy\\\leq C\bigg(\int_\mathcal{O}e^{2s\alpha}\left|\Delta\psi\right|^2  \ dy+s^3\lambda^4\int_{\mathcal{O}}e^{2s\alpha}\alpha^4\left|\psi\right|^2  \ dy+\varepsilon s\lambda^2\int_{\mathcal{O}}e^{2s\alpha}\theta\alpha^2\left|\nabla\psi \right|^2  \ dy\bigg).
\end{multline*}
Thus, we get
\begin{multline*}
s\lambda^2\int_{\mathcal{F}}e^{2s\alpha}\alpha^2 \left|\nabla \psi\right|^2   \ dy	+s^3\lambda^4\int_\mathcal{F} e^{2s\alpha}\alpha^4\left|\psi\right|^2  \ d\Gamma +s^2\lambda^2\beta e^{2s}\int_{\partial\mathcal{F}}\left|\psi_\tau\right|^2d\Gamma \\\leq C\bigg(s^3\lambda^4\int_{\mathcal{O}}e^{2s\alpha}\alpha^4\left| \psi\right|^2   \ dy+\int_{\mathcal{F}} e^{2s\alpha}\alpha\left| f\right|^2  \ dy +s^2\lambda^2e^{2s}\int_{\partial\mathcal{F}}\left| \nabla (a\cdot n)\tau\right| ^2 d\Gamma\\+s^3\lambda^2e^{2s}\left\|\psi_n\right\|^2_{[H^{1/2}(\partial\mathcal{F})]^2}+s^2\lambda^2e^{2s}\int_{\partial\mathcal{F}} \left|b\right|^2 \ d\Gamma\bigg),
\end{multline*}
for $\lambda$ and $s$ sufficiently large. Thus, we obtain \eqref{carlS}.
\end{proof}
\section{Carleman estimate for the linearized system}
\label{carlm}
We consider the following adjoint system
\begin{equation}
\label{aSl1}
\left\{
\begin{array}{rl}
-\partial_t v-\nabla \cdot \mathbb{T}(v,q)=F_1  & \text { in }(0,T)\times\mathcal{F},\\
\nabla \cdot v=0& \text { in }(0,T)\times\mathcal{F},\\
m\ell_v'(t)=\int_{\partial \mathcal{S}}\mathbb{T}( v, q)n\ d\Gamma+F_2  & t\in(0,T),\\
J(k_v'(t))=\int_{\partial \mathcal{S}}y^\perp\cdot\mathbb{T}( v, q)n \ d\Gamma+F_3& t\in(0,T),\\
\end{array}
\right.
\end{equation}
with the boundary conditions
\begin{equation}
\label{aSNBCl1}
\left\{
\begin{array}{rl}
v_n = 0  &  \text { on }(0,T)\times\partial\Omega,\\
\left[ 2\nu D( v) n+\beta_{\Omega}  v\right]_{\tau}=0&  \text { on }(0,T)\times\partial\Omega,\\
( v- v_\mathcal{S})_n=0&   \text { on }(0,T)\times\partial \mathcal{S},\\
\left[ 2\nu D( v) n + \beta_{\mathcal{S}} ( v- v_\mathcal{S})\right]_{\tau}=0 &  \text { on }(0,T)\times\partial \mathcal{S},
\end{array}
\right.
\end{equation}
where $ v_\mathcal{S}(y)=\ell_v+k_v y^\perp$, completed with the initial condition
\begin{equation}
\label{aSIl1}
v(T,\cdot) = v_T,\; \text { in }\mathcal{F},\quad
\ell_v(T)=\ell_T,\quad k_v(T)=k_T.
\end{equation}
Let $\eta\in C^2(\overline{\mathcal{F}})$ which verifies \eqref{eta} with $\mathcal{O}_\eta\subset\subset\mathcal{O}$ a non empty open set. 

Let $\lambda>0$ and 

\begin{equation}
\label{beta}
\beta(t,y)=\frac{e^{\lambda(2N+2) \left\|\eta\right\|_{L^\infty(\Omega)}} -e^{\lambda(2N\left\|\eta\right\|_{L^\infty(\Omega)}+\eta(y))}}{t^N(T-t)^N},
\end{equation}
$$
\widehat{\beta}(t)=\max_{y\in \overline{\mathcal{F}}} \beta(t,y)=\frac{e^{\lambda(2N+2) \left\|\eta\right\|_{L^\infty(\Omega)}} -e^{\lambda2N\left\|\eta\right\|_{L^\infty(\Omega)}}}{t^N(T-t)^N},\quad
\beta^*(t)=\min_{y\in \overline{\mathcal{F}}} \beta(t,y),
$$

\begin{equation}
\label{ksi}
\xi(t,y)=\frac{e^{\lambda(2N\left\|\eta\right\|_{L^\infty(\Omega)}+\eta(y))}}{t^N(T-t)^N},\quad \xi^*(t)=\min_{y\in \overline{\mathcal{F}}}\xi(t,y)=\frac{e^{\lambda 2N\left\|\eta\right\|_{L^\infty(\Omega)}}}{t^N(T-t)^N},\quad \widehat{\xi}(t)=\max_{y\in \overline{\mathcal{F}}}\xi(t,y).
\end{equation}
with $N > 0$ an integer number to be defined later
on.

\begin{Proposition}
Assume that $\beta_\mathcal{S}>0$. 
	
There exist $C=C(\Omega,\mathcal{O})$ and $\overline{C}=\overline{C}(\Omega,\mathcal{O})$ such that for any $F_1\in L^2(0,T;[L^2(\mathcal{F})]^2)$, $F_2\in [L^2(0,T)]^2$ and $F_3\in L^2(0,T)$ and for any $v_T\in [L^2(\mathcal{F})]^2$, $\ell_T\in\mathbb{R}^2$ and $k_T\in\mathbb{R}$, the solution $(v,\ell_v,k_v)$ of the system \eqref{aSl1}-\eqref{aSIl1} verifies the inequality 
\begin{multline}
\label{carl}
s^3\lambda^4\int_0^T\int_{\mathcal{F}}e^{-5s\widehat{\beta}}(\xi^*)^3\left|\nabla  v\right|^2   \ dydt+s^4\lambda^4\int_0^T\int_{\mathcal{F}}e^{-5s\widehat{\beta}}(\xi^*)^4\left| v\right|^2  \ dydt\\+s^3\lambda^3\int_0^Te^{-5s\widehat{\beta}}(\xi^*)^3\left( \left|\ell_v(t)\right|^2+\left| k_v(t) \right|^2  \right) dt\leq \overline{C} \bigg(s^5\lambda^6 \int_0^T\int_{\mathcal{O}} e^{-2s\beta^*-3s\widehat{\beta}}(\widehat{\xi})^5\left| v \right|^2  \ dydt\\+\int_0^T\int_{\mathcal{F}}e^{-3s\widehat{\beta}}\left| F_1 \right|^2  \ dydt+ \int_0^Te^{-3s\widehat{\beta}}\left| F_2 \right|^2 \ dt+ \int_0^Te^{-3s\widehat{\beta}}\left| F_3 \right|^2 \ dt\bigg) .
\end{multline}
for all $\lambda\geq C$ and $s\geq C(T^N+T^{2N})$ with $N\geq 4$.
\end{Proposition}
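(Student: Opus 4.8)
The plan is to adapt the scheme of \cite{MR2317341} and \cite{MO} to the Navier setting: eliminate the pressure through the two–dimensional curl, apply a global parabolic Carleman estimate to the heat equation satisfied by the vorticity, pass from the vorticity back to $v$ by means of the stationary estimate of Proposition~\ref{CS}, and finally estimate the rigid velocities $\ell_v,k_v$. Two book‑keeping facts are used throughout. First, writing $\Theta(t)=e^{2N\lambda\|\eta\|_{L^\infty(\Omega)}}t^{-N}(T-t)^{-N}$, one has $\xi(t,y)=\Theta(t)e^{\lambda\eta(y)}$ and $\beta(t,y)=e^{(2N+2)\lambda\|\eta\|_{L^\infty(\Omega)}}t^{-N}(T-t)^{-N}-\xi(t,y)$, so any stationary Carleman inequality — in particular \eqref{carlS} — can be applied ``frozen at time $t$'' with $s$ replaced by $s\Theta(t)$, then multiplied by the remaining time factor and integrated over $(0,T)$, producing the parabolic weight $e^{-2s\beta}$. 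Second, since $\Theta'(t)/\Theta(t)=O\big((T^N+T^{2N})(t(T-t))^{-1}\big)$, the terms created when a time derivative hits these weights are controlled by $s\Theta(t)$ precisely when $s\ge C(T^N+T^{2N})$, which is the origin of that threshold.

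\emph{Elimination of the pressure and vorticity estimate.} Since $\nabla\cdot\mathbb{T}(v,q)=-\nabla q+\nu\Delta v$ on divergence–free fields, the first equation of \eqref{aSl1} reads $-\partial_t v+\nabla q-\nu\Delta v=F_1$; taking the scalar curl $z:=\nabla\times v$ annihilates $q$ and gives the heat equation $-\partial_t z-\nu\Delta z=\nabla\times F_1$ in $(0,T)\times\mathcal{F}$. Its boundary conditions are inhomogeneous: the standard identity on a $C^2$ curve expressing $2(D(v)n)_\tau$ through $\nabla\times v$, the curvature $\kappa$ and the tangential derivative of $v\cdot n$ turns \eqref{aSNBCl1} into $z=(2\kappa-\beta_\Omega/\nu)(v\cdot\tau)$ on $\partial\Omega$ (where $v\cdot n=0$) and $z=2\kappa(v\cdot\tau)-(\beta_{\mathcal{S}}/\nu)\big((v-v_{\mathcal{S}})\cdot\tau\big)+L(\ell_v,k_v)$ on $\partial\mathcal{S}$ (where $v\cdot n=(\ell_v+k_v y^\perp)\cdot n$), with $L$ an explicit linear map, so that the trace of $z$ on $\partial\mathcal{F}$ is controlled by the trace of $v$ and by $(\ell_v,k_v)$. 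Applying to $z$ the classical heat–operator Carleman estimate with the weights $\widehat\beta,\xi$ of \eqref{beta}--\eqref{ksi}, keeping the boundary terms explicit and integrating by parts once so as to move the derivative of $\nabla\times F_1$ onto the weight (this is what produces the term $\int_0^T\int_{\mathcal{F}}e^{-3s\widehat\beta}|F_1|^2$ of \eqref{carl}), one obtains a bound for $\int_0^T\int_{\mathcal{F}}e^{-2s\beta}\big(s\xi|\nabla z|^2+s^3\xi^3|z|^2\big)$ by a local integral of $|z|^2$, the $F_1$–term, and boundary integrals of $|v\cdot\tau|^2$ and of $|\ell_v|^2+|k_v|^2$ against suitable powers of $s,\lambda,\widehat\xi$.

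\emph{From the vorticity to the velocity.} For a.e.\ $t$, the field $v(t,\cdot)$ solves $-\Delta v=\nabla\times z$, $\nabla\cdot v=0$ in $\mathcal{F}$, with $v\cdot n$ equal to $0$ on $\partial\Omega$ and to $v_{\mathcal{S}}\cdot n$ on $\partial\mathcal{S}$, and with tangential datum $b=(\beta_{\mathcal{S}}/\nu)(v_{\mathcal{S}})_\tau$ on $\partial\mathcal{S}$ (and the corresponding homogeneous datum on $\partial\Omega$). Proposition~\ref{CS} therefore applies with $f=\nabla\times z$; using the factorisation of the weights, integrating in $t$, and combining with the previous step — the contribution $s\int_0^T\int_{\mathcal{F}}e^{-2s\beta}\xi|\nabla\times z|^2$ being reabsorbed into the left–hand side of the vorticity estimate — gives, modulo lower–order terms, an inequality of the form
\[
s^3\lambda^4\int_0^T\int_{\mathcal{F}}e^{-2s\beta}\xi^3|\nabla v|^2+s^4\lambda^4\int_0^T\int_{\mathcal{F}}e^{-2s\beta}\xi^4|v|^2\le\overline{C}\Big(\text{(local term in }v)+\int_0^T\int_{\mathcal{F}}e^{-3s\widehat\beta}|F_1|^2+\mathcal{B}(\ell_v,k_v)\Big),
\]
where $\mathcal{B}(\ell_v,k_v)$ collects the boundary terms carrying the rigid velocities. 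The traces $v\cdot\tau$ on $\partial\mathcal{F}$ appearing along the way are estimated, via a trace inequality in the spirit of \cite[Theorem~II.4.1]{galdi}, by the weighted $H^1$–norm of $v$ on a fixed collar of $\partial\mathcal{F}$ inside $\mathcal{F}$, hence absorbed once $s$ and $\lambda$ are large.

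\emph{Estimate of the rigid velocities — the main obstacle.} Unlike in the Dirichlet case one only knows $(v-v_{\mathcal{S}})\cdot n=0$ on $\partial\mathcal{S}$, not $v=v_{\mathcal{S}}$, so $(\ell_v,k_v)$ must be recovered indirectly. From $v\cdot n=\ell_v\cdot n+k_v(y^\perp\cdot n)$ on $\partial\mathcal{S}$ and the finite dimensionality of rigid fields, the part of $(\ell_v,k_v)$ detected by the normal trace — which is all of $\ell_v$, and also $k_v$ unless $\mathcal{S}$ is a disc (the only shape carrying a nontrivial rigid field tangent to $\partial\mathcal{S}$) — is bounded by $\|v\cdot n\|_{H^{-1/2}(\partial\mathcal{S})}$, hence by the weighted $H^1$–norm of $v$ near $\partial\mathcal{S}$ already under control. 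The remaining component, namely the angular velocity in the disc case, is obtained from the rigid–body ODEs of \eqref{aSl1}: one rewrites the stress integrals $\int_{\partial\mathcal{S}}\mathbb{T}(v,q)n\,d\Gamma$ and $\int_{\partial\mathcal{S}}y^\perp\cdot\mathbb{T}(v,q)n\,d\Gamma$ in terms of interior quantities by integrating $\nabla\cdot\mathbb{T}(v,q)=-\partial_t v-F_1$ over $\mathcal{F}$, estimates the ensuing fluid traces by local parabolic regularity for the Stokes system together with a weighted pressure bound (from $\Delta q=\nabla\cdot F_1$ with the Neumann datum supplied by the momentum equation, by elliptic regularity), and then multiplies the ODEs by suitably weighted multiples of $\ell_v$, $k_v$ and integrates by parts in time. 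The endpoint contributions at $t=0,T$ vanish because the weights vanish to high order there (this forces $N\ge4$), and the terms generated by the time derivative of the weights are dominated once $s\ge C(T^N+T^{2N})$. Reconciling the weights so produced with those on the two sides of \eqref{carl} — and, in the disc case, exploiting that the friction term $\beta_{\mathcal{S}}(v-v_{\mathcal{S}})$, present since $\beta_{\mathcal{S}}>0$, makes the angular momentum equation sensitive to $k_v$ — is precisely the delicate point of the whole argument. Finally one fixes $N\ge4$, chooses $\lambda\ge C$ large to dominate the lower powers of $\lambda$ (using the balance of the $\lambda^2$ and $\lambda^4$ factors in \eqref{carlS} and in the heat estimate), then $s\ge C(T^N+T^{2N})$ large to absorb the remaining lower–order and collar terms, and replaces the $y$–dependent weights $e^{-2s\beta},\xi$ by the $y$–independent ones $e^{-5s\widehat\beta},\xi^*$ on the left and $e^{-2s\beta^*-3s\widehat\beta},\widehat\xi$ on the right, which is admissible because $\beta^*\le\beta\le\widehat\beta$, $\xi^*\le\xi\le\widehat\xi$, and the exponential and polynomial gaps in \eqref{carl} are built in for exactly this purpose. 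This yields \eqref{carl}.
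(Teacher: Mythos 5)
Your overall skeleton (curl to eliminate the pressure, a parabolic Carleman for the vorticity, the stationary estimate of Proposition \ref{CS} frozen in time, then recovery of the rigid velocities and absorption of boundary traces) matches the paper, but there is a genuine gap at the very first step. You apply the heat Carleman directly to $\nabla\times v$ with source $\nabla\times F_1$, whereas $F_1$ is only in $L^2(0,T;[L^2(\mathcal{F})]^2)$, so $\nabla\times F_1$ is not an admissible $L^2$ source for the estimate of Proposition \ref{pheat}; and your claim that ``integrating by parts onto the weight'' produces the term $\int_0^T\int_{\mathcal{F}}e^{-3s\widehat{\beta}}|F_1|^2$ is unsubstantiated: any such manipulation yields an $F_1$-term carrying the weight $e^{-2s\beta}$ (times powers of $s\xi$), and since $\beta\leq\widehat{\beta}$ one has $e^{-2s\beta}\geq e^{-3s\widehat{\beta}}$, so the inequality you would obtain is strictly weaker than \eqref{carl} and does not give its right-hand side, nor the uniform weights $e^{-5s\widehat{\beta}}(\xi^*)^k$ on the left. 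The paper's mechanism for both points is the decomposition $\rho v=\varphi+z$, $\rho=e^{-\frac{3}{2}s\widehat{\beta}}$, in which $z$ absorbs $(F_1,F_2,F_3)$ through the maximal regularity estimate \eqref{reg} (this is exactly where the $e^{-3s\widehat{\beta}}$ source terms and the $e^{-5s\widehat{\beta}}$ left-hand weights come from), while the Carleman machinery is applied only to $\varphi$, whose source $-\rho'v$ can be reabsorbed using $|\rho'|\leq Cs\rho(\xi^*)^{1+1/N}$. Without this (or an equivalent weighted-regularity device) the stated weights cannot be reached.

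Two further steps are missing or replaced by unproven plans. First, the hard boundary terms of the parabolic Carleman are the integrals of $|\nabla(\nabla\times\varphi)\tau|^2$ and $|\nabla\times\partial_t\varphi|^2$ on $\partial\mathcal{F}$ (the terms \eqref{2terms}); you only account for zeroth-order traces of the vorticity. Handling the time-derivative trace is the bulk of the paper's Step 3: weighted $L^2$ and higher-order regularity for $\zeta_2\varphi$ (Proposition \ref{prop}), interpolation in $H^{3/8}(0,T)$ and $H^{3/4}(\mathcal{F})$, and Young's inequality — and it is here, not in the vanishing of the weights at $t=0,T$, that the condition $N\geq 4$ arises. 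Second, for the rigid velocities you split into ``normal-trace-detectable'' components and a disc case to be treated through the rigid-body ODEs, and you explicitly leave the weight bookkeeping of that case unresolved; the paper needs no case distinction: the tangential Navier condition with $\beta_{\mathcal{S}}>0$ gives \eqref{cc3}, bounding $(\varphi_{\mathcal{S}})_\tau$ by traces of $\nabla\times\varphi$, $\varphi$ and $\varphi_{\mathcal{S}}\cdot n$, which together with the inequalities \eqref{eq21} (from \cite{royT}) and \eqref{imeno} (from \cite{MR2526402}) and the trace estimate of \cite[Theorem II.4.1]{galdi} yields the full $|\ell_\varphi|^2+|k_\varphi|^2$ terms with the correct weights for an arbitrary shape of $\mathcal{S}$. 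As it stands, your proposal therefore does not prove the proposition.
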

\begin{proof}

\textbf{Step 1: Decomposition of the solution}
	
Let $\rho(t)=e^{-\frac{3}{2}s\widehat{\beta}(t)}$ and let us write 
\begin{equation}
\label{decomp}
\rho v=\varphi+z, \quad\rho \ell_v=\ell_\varphi+\ell_z,\quad\rho k_v=k_\varphi+k_z,\quad\rho q=q_\varphi+q_z,
\end{equation} where
\begin{equation}
\label{system1}
\left\{
\begin{array}{rl}
-\partial_t \varphi-\nabla \cdot \mathbb{T}(\varphi,q_\varphi)=-\rho'v  & \text { in }(0,T)\times\mathcal{F},\\
\nabla \cdot \varphi=0& \text { in }(0,T)\times\mathcal{F},\\
m\ell_\varphi'(t)=\int_{\partial \mathcal{S}}\mathbb{T}( \varphi, q_\varphi)n\ d\Gamma+m\rho'\ell_v & t\in(0,T),\\
J(k_\varphi'(t))=\int_{\partial \mathcal{S}}y^\perp\cdot\mathbb{T}( \varphi, q_\varphi)n \ d\Gamma+J\rho'k_v & t\in(0,T),\\
\end{array}
\right.
\end{equation}
with the boundary conditions
\begin{equation}
\label{boundary1}
\left\{
\begin{array}{rl}
\varphi_n = 0  &  \text { on }(0,T)\times\partial\Omega,\\
\left[ 2\nu D(\varphi) n+\beta_{\Omega}  \varphi\right]_{\tau}=0&  \text { on }(0,T)\times\partial\Omega,\\
(\varphi- \varphi_\mathcal{S})_n=0&   \text { on }(0,T)\times\partial \mathcal{S},\\
\left[ 2\nu D(\varphi) n + \beta_{\mathcal{S}} (\varphi- \varphi_\mathcal{S})\right]_{\tau}=0 &  \text { on }(0,T)\times\partial \mathcal{S},
\end{array}
\right.
\end{equation}
where $ \varphi_\mathcal{S}(y)=\ell_\varphi+k_\varphi y^\perp$, with
\begin{equation*}
\varphi(T,\cdot) = 0,\; \text{ in } \mathcal{F},\quad
\ell_\varphi(T)=0,\quad k_\varphi(T)=0.
\end{equation*}	
and 
\begin{equation}
\label{system2}
\left\{
\begin{array}{rl}
-\partial_t z-\nabla \cdot \mathbb{T}(z,q_z)=\rho F_1  & \text { in }(0,T)\times\mathcal{F},\\
\nabla \cdot z=0& \text { in }(0,T)\times\mathcal{F},\\
m\ell_z'(t)=\int_{\partial \mathcal{S}}\mathbb{T}( z, q_z)n\ d\Gamma+\rho F_2  & t\in(0,T),\\
J(k_z'(t))=\int_{\partial \mathcal{S}}y^\perp\cdot\mathbb{T}( z, q_z)n \ d\Gamma+\rho F_3& t\in(0,T),\\
\end{array}
\right.
\end{equation}
with the boundary conditions
\begin{equation*}
\left\{
\begin{array}{rl}
z_n = 0  &  \text { on }(0,T)\times\partial\Omega,\\
\left[ 2\nu D(z) n+\beta_{\Omega}  z\right]_{\tau}=0&  \text { on }(0,T)\times\partial\Omega,\\
(z- z_\mathcal{S})_n=0&   \text { on }(0,T)\times\partial \mathcal{S},\\
\left[ 2\nu D(z) n + \beta_{\mathcal{S}} (z- z_\mathcal{S})\right]_{\tau}=0 &  \text { on }(0,T)\times\partial \mathcal{S},
\end{array}
\right.
\end{equation*}
where $ z_\mathcal{S}(y)=\ell_z+k_z y^\perp$, completed with the initial condition
\begin{equation*}
z(T,\cdot) = 0,\; \text{ in }\mathcal{F},\quad
\ell_z(T)=0,\quad k_z(T)=0.
\end{equation*}	
Using Theorem \ref{regu}, we have
\begin{multline}
\label{reg}
\left\|z\right\|_{H^1(0,T;[L^2(\mathcal{F})]^2)}+\left\|z\right\|_{L^2(0,T;[H^2(\mathcal{F})]^2)}+\left\|\ell_z \right\|_{[H^1(0,T)]^2}+ \left\|k_z \right\|_{H^1(0,T)}\\\leq C \left( \left\| \rho F_1\right\|_{L^2(0,T;[L^2(\mathcal{F})]^2)} +\left\|\rho F_2\right\|_{[L^2(0,T)]^2}+\left\|\rho
F_3\right\|_{L^2(0,T)}    \right)  .
\end{multline}
\textbf{Step 2:}
In this part, we are going to obtain a Carleman estimate for the system \eqref{system1} by following the proof in \cite{MG}. However, we need to deal with the Navier boundary conditions \eqref{boundary1}.
 
We apply the curl operator to the first equation of \eqref{system1} in order to eliminate the pressure, to get
\begin{equation}
\label{eq1}
-\partial_t(\nabla\times\varphi)-\nu\Delta(\nabla \times\varphi)=-\rho'\nabla\times v\quad \text{in } (0,T)\times\mathcal{F}.
\end{equation}
We obtain a one dimensional heat equation. We recall that 
$$
\nabla\times\varphi=\frac{\partial \varphi_2}{\partial y_1}-\frac{\partial \varphi_1}{\partial y_2}.
$$ 
We apply Proposition \ref{pheat} replacing $\psi$ by $\nabla\times\varphi$. We get 
\begin{multline}
\label{eq9}
s\lambda^2\int_0^T\int_{\mathcal{F}}e^{-2s\beta}\xi\left|\nabla (\nabla\times\varphi)\right|^2  \ dy  dt+s^3\lambda^4\int_0^T\int_{\mathcal{F}}e^{-2s\beta}\xi^3\left|\nabla\times\varphi\right|^2 \ dy dt \\+s^3\lambda^3\int_0^T\int_{\partial\mathcal{F}}e^{-2s\widehat{\beta}}(\xi^*)^{3}\left|\nabla\times\varphi\right|^2d\Gamma dt \leq C\bigg( s^3\lambda^4\int_0^T\int_{\mathcal{O}_\eta} e^{-2s\beta}\xi^3\left|\nabla\times\varphi\right|^2 \ dy dt \\+s\lambda^2\int_0^T\int_{\mathcal{O}_\eta}e^{-2s\beta} \xi \left|\nabla(\nabla\times\varphi)\right|^2 \ dy dt+s\lambda\int_0^T\int_{\partial\mathcal{F}}e^{-2s\widehat{\beta}}\xi^*\left| \nabla (\nabla\times\varphi)\tau\right| ^2 d\Gamma dt\\+s^{-1}\lambda^{-1}\int_0^T\int_{\partial\mathcal{F}}e^{-2s\widehat{\beta}}(\xi^*)^{-1}\left|\nabla\times\partial_t\varphi\right|^2 \ d\Gamma dt+ \int_0^T\int_{\mathcal{F}}e^{-2s\beta}(\rho')^2\left|  \nabla\times v\right|^2  \ dydt  \bigg).
\end{multline}
Arguing as \cite[pp.7-8]{MG}, we treat the local terms appearing in the right hand side of \eqref{eq9}, we obtain
\begin{multline}
\label{eq4}
s\lambda^2\int_0^T\int_{\mathcal{F}} e^{-2s\beta} \xi\left|\nabla(\nabla\times\varphi)\right| ^2  \ dy dt+s^3\lambda^4\int_0^T\int_{\mathcal{F}} e^{-2s\beta}\xi^3\left|\nabla\times\varphi\right|^2  \ dydt\\+s^3\lambda^3\int_0^T\int_{\partial\mathcal{F}} e^{-2s\widehat{\beta}} (\xi^*)^3\left|\nabla\times\varphi \right| ^2 \ d\Gamma dt  \leq C\bigg( s^5\lambda^6\int_0^T\int_{\mathcal{O}}e^{-2s\beta}\xi^5\left|\varphi\right|^2  \ dy dt \\+s\lambda\int_0^T\int_{\partial\mathcal{F}}e^{-2s\widehat{\beta}}\xi^*\left|\nabla(\nabla\times\varphi)\tau\right|^2 \ d\Gamma dt +s^{-1}\lambda^{-1}\int_0^T\int_{\partial\mathcal{F}}e^{-2s\widehat{\beta}}(\xi^*)^{-1}\left|\nabla \times\partial_t\varphi \right|^2\ d\Gamma dt\\+ \int_0^T\int_{\mathcal{F}}e^{-2s\beta}(\rho')^2\left|  \nabla\times v\right|^2  \ dydt  
\bigg),
\end{multline}
for $\lambda\geq C$ and $s\geq C(T^N+T^{2N})$.
We notice that $\varphi$ satisfies the following problem
\begin{equation}
\label{sy}
\left\{
\begin{array}{rl}
\Delta \varphi=-\nabla\times (\nabla\times\varphi)&\text{in}\quad\mathcal{F}, \\
\nabla \cdot \varphi=0& \text{in}\quad\mathcal{F},\\  
\end{array}
\right.
\end{equation}
with the boundary conditions \eqref{boundary1}. 
Applying the Carleman inequality proved in Proposition \ref{CS}, we obtain 
\begin{multline}
\label{eq2}
s^2\lambda^2\int_{\Omega}e^{2se^{\lambda\eta}}e^{2\lambda\eta}\left|\nabla \varphi\right|^2   \ dy+s^4\lambda^4\int_{\Omega}e^{2se^{\lambda\eta}}e^{4\lambda\eta}\left|\varphi\right|^2  \ dy \\ \leq C\bigg( s^4\lambda^4 \int_{\mathcal{O}} e^{2se^{\lambda\eta}}e^{4\lambda\eta}\left|\varphi \right|^2  \ dy +s\int_{\mathcal{F}} e^{2se^{\lambda\eta}}e^{\lambda \eta}\left| \nabla\times(\nabla\times\varphi)\right|^2  \ dy\\+\beta_\mathcal{S}s^3\lambda^2 e^{2s}\int_{\partial\mathcal{S}}\left|(\varphi_\mathcal{S})_\tau \right|^2 \ d\Gamma +s^4\lambda^2e^{2s}\left\|\varphi_\mathcal{S}\cdot n \right\|_{H^{3/2}(\partial\mathcal{S})}^2\bigg),
\end{multline} 
where we have used that $a=\varphi_\mathcal{S}1_{\partial\mathcal{S}}$ and  $b=\beta_\mathcal{S}(\varphi_\mathcal{S})_\tau1_{\partial\mathcal{S}}$.
We replace $s$ in \eqref{eq2} by $\frac{se^{2N\lambda \left\| \eta\right\|_{L^\infty(\Omega)} }}{t^N(T-t)^N}$. Multiplying \eqref{eq2} by $$
e^{-2s\dfrac{e^{(2N+2)\lambda\left\| \eta\right\|_{L^\infty(\Omega)}}}{t^N(T-t)^N}},
$$ 
and integrating over $(0,T)$, we get
\begin{multline}
\label{eq3}
s^2\lambda^2\int_0^T\int_{\mathcal{F}}e^{-2s\beta}\xi^2\left|\nabla \varphi\right|^2   \ dydt+s^4\lambda^4\int_0^T\int_{\mathcal{F}}e^{-2s\beta}\xi^4\left|\varphi\right|^2  \ dydt \leq C\bigg( s^4\lambda^4 \int_0^T\int_{\mathcal{O}} e^{-2s\beta}\xi^3\left|\varphi \right|^2  \ dydt\\+s\int_0^T\int_{\mathcal{F}} e^{-2s\beta}\xi\left| \nabla\times(\nabla\times\varphi)\right|^2  \ dydt+\beta_\mathcal{S}s^3\lambda^2\int_0^Te^{-2s\widehat{\beta}}(\xi^*)^3\int_{\partial\mathcal{S}}\left|(\varphi_\mathcal{S})_\tau \right|^2 \ d\Gamma\ dt \\+s^4\lambda^2\int_0^Te^{-2s\widehat{\beta}}(\xi^*)^4\left\|\varphi_\mathcal{S}\cdot n \right\|_{H^{3/2}(\partial\mathcal{S})}^2\ dt \bigg).
\end{multline}
Applying the estimates obtained in \cite[Theorem 2.2]{MR2802919}, we get
\begin{equation}
\label{eq10}
\left\|\varphi\right\|^2_{[H^1(\mathcal{F})]^2}\leq C\left( \left\|\varphi\right\|^2_{[L^2(\mathcal{F})]^2}+\left\|\nabla\cdot \varphi\right\|^2_{L^2(\mathcal{F})}+\left\|\nabla\times\varphi\right\|^2_{L^2(\mathcal{F})}+
\left\| \varphi_\mathcal{S}\cdot n\right\|_{H^{1/2}(\partial\mathcal{S})}^2  \right).  
\end{equation} 
Then, we multiply \eqref{eq10} by $s^3\lambda^4e^{-2s\widehat{\beta}}(\xi^*)^3$, we get
\begin{multline}
\label{eq8}
s^3\lambda^4\int_0^T\int_{\mathcal{F}}e^{-2s\widehat{\beta}}(\xi^*)^3\left|\nabla\varphi\right|^2  \ dy dt\leq C\bigg(s^3\lambda^4\int_0^T\int_{\mathcal{F}} e^{-2s\widehat{\beta}}(\xi^*)^3\left|\varphi\right|^2 \ dydt\\+ s^3\lambda^4\int_0^T\int_{\mathcal{F}} e^{-2s\widehat{\beta}}(\xi^*)^3\left|\nabla\times\varphi\right|^2 \ dydt+s^3\lambda^4\int_0^Te^{-2s\widehat{\beta}}(\xi^*)^3\left\|\varphi_\mathcal{S}\cdot n \right\|_{H^{1/2}(\partial\mathcal{S})}^2\ dt \bigg).
\end{multline}
	
Adding \eqref{eq8}, \eqref{eq3} and \eqref{eq4}, we deduce
\begin{multline}
\label{eq5}
s^3\lambda^4\int_0^T\int_{\mathcal{F}}e^{-2s\widehat{\beta}}(\xi^*)^3\left|\nabla \varphi\right|^2   \ dydt+s^4\lambda^4\int_0^T\int_{\mathcal{F}}e^{-2s\beta}\xi^4\left|\varphi\right|^2  \ dydt+	s\lambda^2\int_0^T\int_{\mathcal{F}}e^{-2s\beta}\xi\left|\nabla (\nabla\times\varphi)\right|^2  \ dy  dt\\+s^3\lambda^3\int_0^T\int_{\partial\mathcal{F}} e^{-2s\widehat{\beta}} (\xi^*)^3\left|\nabla\times\varphi \right| ^2 \ d\Gamma dt \leq C\bigg( s^5\lambda^6 \int_0^T\int_{\mathcal{O}} e^{-2s\beta}\xi^5\left|\varphi \right|^2  \ dydt\\+s\lambda\int_0^T\int_{\partial\mathcal{F}}e^{-2s\widehat{\beta}}\xi^*\left|\nabla(\nabla\times\varphi)\tau\right|^2 \ d\Gamma dt +s^{-1}\lambda^{-1}\int_0^T\int_{\partial\mathcal{F}}e^{-2s\widehat{\beta}}(\xi^*)^{-1}\left|\nabla \times\partial_t\varphi \right|^2\ d\Gamma dt\\+\beta_\mathcal{S}s^3\lambda^2\int_0^T\int_{\partial\mathcal{S}}e^{-2s\widehat{\beta}}(\xi^*)^3\left|(\varphi_\mathcal{S})_\tau \right|^2 \ d\Gamma\ dt +s^4\lambda^2\int_0^Te^{-2s\widehat{\beta}}(\xi^*)^4\left\|\varphi_\mathcal{S}\cdot n \right\|_{H^{3/2}(\partial\mathcal{S})}^2\ dt\\+\int_0^T\int_{\mathcal{F}}e^{-2s\beta}(\rho')^2\left|  \nabla\times v\right|^2  \ dydt   \bigg).
\end{multline}
Taking $(s,\lambda)$ large enough, the fifth term in the right hand side of \eqref{eq5} can be transported to the left side. Indeed, since $\varphi_\mathcal{S}$ is rigid, from \cite[Lemma 2.2]{royT}, we have 
\begin{equation}
\label{eq21}
\int_{\mathcal{F}}\left|\varphi(t,\cdot)\right|^2  \ dy\geq C \left\|\varphi_\mathcal{S}(t)\cdot n \right\|_{H^{3/2}(\partial\mathcal{S})}^2,
\end{equation}
for any shape of the body $\mathcal{S}$.
Moreover, we have the following relation
\begin{equation}
\label{cc1}
\nabla\times\varphi=(\nabla\varphi n)\cdot\tau-((\nabla\varphi)^*n)\cdot \tau,\quad \text{on }\partial\mathcal{F},
\end{equation}
where $\tau=\begin{pmatrix}
-n_2\\ n_1
\end{pmatrix}$.
In the other hand, we have
\begin{equation}
\label{cc2}
((\nabla\varphi)^* n)\cdot\tau=\sum_{i,j}\partial_i\varphi_jn_j\tau_i=\sum_{i,j}\partial_i(\varphi_jn_j)\tau_i-\sum_{i,j}\partial_in_j\varphi_j\tau_i=\nabla(\varphi\cdot n)\cdot\tau-\nabla n\tau\cdot\varphi,\quad \text{on }\partial\mathcal{F}.
\end{equation}
Using the boundary conditions \eqref{boundary1}, we can write
\begin{equation}
\label{cc4}
\beta_\mathcal{S}(\varphi_\mathcal{S})_\tau=\nu\left(\nabla\varphi n+(\nabla\varphi)^*n \right)_\tau+\beta_\mathcal{S}\varphi_\tau,\quad \text{on } \partial\mathcal{S}.
\end{equation}
Using \eqref{cc4}, \eqref{cc2} and \eqref{cc1} ,
we get 
\begin{equation}
\label{cc3}
\beta_{\mathcal{S}}\int_{\partial\mathcal{S}}\left|(\varphi_\mathcal{S})_\tau\right|^2 \ d\Gamma\leq C \left(\int_{\partial\mathcal{F}}\left|\nabla\times\varphi\right|^2 \ d\Gamma+\int_{\partial\mathcal{F}}\left|\varphi\right|^2 \ d\Gamma +\left\|\varphi_\mathcal{S}\cdot n \right\|_{H^{3/2}(\partial\mathcal{S})}^2\right). 
\end{equation}
Multiplying \eqref{cc3} by $s^3\lambda^3e^{-2s\widehat{\beta}} (\xi^*)^3$ and integrating over $(0,T)$, we obtain
\begin{multline}
\label{eq19}
s^3\lambda^3\int_0^T\int_{\partial\mathcal{S}} e^{-2s\widehat{\beta}} (\xi^*)^3\left|(\varphi_\mathcal{S})_\tau \right| ^2 \ d\Gamma dt\leq C\bigg(  s^3\lambda^3\int_0^T\int_{\partial\mathcal{F}} e^{-2s\widehat{\beta}} (\xi^*)^3\left|\nabla\times\varphi \right| ^2 \ d\Gamma dt\\+s^3\lambda^3\int_0^T\int_{\partial\mathcal{F}} e^{-2s\widehat{\beta}} (\xi^*)^3\left|\varphi \right| ^2 \ d\Gamma dt+s^3\lambda^3\int_0^T e^{-2s\widehat{\beta}} (\xi^*)^3\left\|\varphi_\mathcal{S}\cdot n \right\|_{H^{3/2}(\partial\mathcal{S})}^2 \ dt\bigg) .
\end{multline}
Using the inequality in \cite[Theorem II.4.1]{galdi} with $r=2$, $q=2$, we obtain
\begin{equation}
\label{eq20}
s^3\lambda^3\int_{\partial\mathcal{F}} e^{-2s\widehat{\beta}} (\xi^*)^3\left|\varphi\right|^2 \ d\Gamma \leq C \left( s^3\lambda^3 \int_\mathcal{F}e^{-2s\widehat{\beta}} (\xi^*)^3\left|\nabla \varphi\right|^2 \ dy + s^3\lambda^3\int_\mathcal{F} e^{-2s\widehat{\beta}} (\xi^*)^3\left|\varphi\right|^2  \ dy\right) .
\end{equation}
Thus, adding \eqref{eq20}, \eqref{eq19}, \eqref{eq5} and using \eqref{eq21}, we get 
\begin{multline}
\label{17}
s^3\lambda^4\int_0^T\int_{\mathcal{F}}e^{-2s\widehat{\beta}}(\xi^*)^3\left|\nabla \varphi\right|^2   \ dydt+s^4\lambda^4\int_0^T\int_{\mathcal{F}}e^{-2s\beta}\xi^4\left|\varphi\right|^2  \ dydt+s^3\lambda^3\int_0^T\int_{\partial\mathcal{F}}e^{-2s\widehat{\beta}}(\xi^*)^3\left|\varphi_{\mathcal{S}}(t)  \right|^2\ d\Gamma dt\\+s\lambda^2\int_0^T\int_{\mathcal{F}}e^{-2s\beta}\xi\left|\nabla (\nabla\times\varphi)\right|^2  \ dy  dt\leq C\bigg( s^5\lambda^6 \int_0^T\int_{\mathcal{O}} e^{-2s\beta}\xi^5\left|\varphi \right|^2  \ dydt\\+s\lambda\int_0^T\int_{\partial\mathcal{F}}e^{-2s\widehat{\beta}}\xi^*\left|\nabla(\nabla\times\varphi)\tau\right|^2 \ d\Gamma dt +s^{-1}\lambda^{-1}\int_0^T\int_{\partial\mathcal{F}}e^{-2s\widehat{\beta}}(\xi^*)^{-1}\left|\nabla \times\partial_t\varphi \right|^2\ d\Gamma dt\\+\int_0^T\int_{\mathcal{F}}e^{-2s\beta}(\rho')^2\left|  \nabla\times v\right|^2  \ dydt   \bigg).
\end{multline}
Using \cite[lemma 1, section 4.1]{MR2526402}, we have that
\begin{equation}
\label{imeno}
\int_{\partial\mathcal{F}}\left|\varphi_{\mathcal{S}}(t)  \right|^2\ d\Gamma\geq C\left(\left|\ell_\varphi(t)\right|^2+\left| k_\varphi(t) \right|^2 \right). 
\end{equation}
Using \eqref{imeno}, \eqref{eq21} and combining with \eqref{17}, we obtain
\begin{multline}
\label{eq6}
s^3\lambda^4\int_0^T\int_{\mathcal{F}}e^{-2s\widehat{\beta}}(\xi^*)^3\left|\nabla \varphi\right|^2   \ dydt+s^4\lambda^4\int_0^T\int_{\mathcal{F}}e^{-2s\beta}\xi^4\left|\varphi\right|^2  \ dydt+s^3\lambda^3\int_0^Te^{-2s\widehat{\beta}}(\xi^*)^3 \left|\ell_\varphi(t)\right|^2\ dt\\+s\lambda^2\int_0^T\int_{\mathcal{F}}e^{-2s\beta}\xi\left|\nabla (\nabla\times\varphi)\right|^2  \ dy  dt+s^3\lambda^3\int_0^Te^{-2s\widehat{\beta}}(\xi^*)^3\left| k_\varphi(t) \right|^2\ dt\leq C\bigg( s^5\lambda^6 \int_0^T\int_{\mathcal{O}} e^{-2s\beta}\xi^5\left|\varphi \right|^2  \ dydt\\+s\lambda\int_0^T\int_{\partial\mathcal{F}}e^{-2s\widehat{\beta}}\xi^*\left|\nabla(\nabla\times\varphi)\tau\right|^2 \ d\Gamma dt +s^{-1}\lambda^{-1}\int_0^T\int_{\partial\mathcal{F}}e^{-2s\widehat{\beta}}(\xi^*)^{-1}\left|\nabla \times\partial_t\varphi \right|^2\ d\Gamma dt\\+\int_0^T\int_{\mathcal{F}}e^{-2s\beta}(\rho')^2\left|  \nabla\times v\right|^2  \ dydt   \bigg).
\end{multline}
	
Let us deal with the last term in the right hand side of \eqref{eq6}.
Noticing that $ \left|\rho' \right|\leq C s\rho (\xi^*)^{1+1/N}  $ and using \eqref{decomp}, we get
\begin{multline}
\int_0^T\int_{\mathcal{F}}e^{-2s\beta}(\rho')^2\left|  \nabla\times v\right|^2  \ dydt= \int_0^T\int_{\mathcal{F}}e^{-2s\beta}(\rho')^2(\rho)^{-2}\left|  \nabla\times \rho v\right|^2  \ dydt\\\leq Cs^2\int_0^T\int_{\mathcal{F}}e^{-2s\beta}(\xi^*)^3\left( \left|  \nabla\times \varphi\right|^2+\left|  \nabla\times z\right|^2\right)   \ dydt,
\end{multline}
for $N\geq 2$. Using that $ s^2e^{-2s\beta}(\xi^*)^3$ is bounded, applying \eqref{reg}, the inequality \eqref{eq6} is reduced to 
\begin{multline}
\label{eq14}
s^3\lambda^4\int_0^T\int_{\mathcal{F}}e^{-2s\widehat{\beta}}(\xi^*)^3\left|\nabla \varphi\right|^2   \ dydt+s^4\lambda^4\int_0^T\int_{\mathcal{F}}e^{-2s\beta}\xi^4\left|\varphi\right|^2  \ dydt+	s\lambda^2\int_0^T\int_{\mathcal{F}}e^{-2s\beta}\xi\left|\nabla (\nabla\times\varphi)\right|^2  \ dy  dt\\+s^3\lambda^3\int_0^Te^{-2s\widehat{\beta}}(\xi^*)^3 \left|\ell_\varphi(t)\right|^2 \ dt+s^3\lambda^3\int_0^Te^{-2s\widehat{\beta}}(\xi^*)^3\left| k_\varphi(t) \right|^2  \ dt \leq C\bigg( s^5\lambda^6 \int_0^T\int_{\mathcal{O}} e^{-2s\beta}\xi^5\left|\varphi \right|^2  \ dydt\\+s\lambda\int_0^T\int_{\partial\mathcal{F}}e^{-2s\widehat{\beta}}\xi^*\left|\nabla(\nabla\times\varphi)\tau\right|^2 \ d\Gamma dt +s^{-1}\lambda^{-1}\int_0^T\int_{\partial\mathcal{F}}e^{-2s\widehat{\beta}}(\xi^*)^{-1}\left|\nabla \times\partial_t\varphi \right|^2\ d\Gamma dt\\+\int_0^T\int_{\mathcal{F}}e^{-3s\widehat{\beta}}\left| F_1\right|^2  \ dydt+\int_0^Te^{-3s\widehat{\beta}} (\left| F_2\right|^2+\left| F_3\right|^2) \ dt \bigg),
\end{multline}
for $\lambda\geq C$ and $s\geq C(T^N+T^{2N})$.
	
\textbf{Step 3:}
Now, it remains to treat the two terms
\begin{equation}
\label{2terms}
s\lambda\int_0^T\int_{\partial\mathcal{F}}e^{-2s\widehat{\beta}}\xi^*\left|\nabla(\nabla\times\varphi)\tau\right|^2 \ d\Gamma dt,\quad s^{-1}\lambda^{-1}\int_0^T\int_{\partial\mathcal{F}}e^{-2s\widehat{\beta}}(\xi^*)^{-1}\left|\nabla \times\partial_t\varphi \right|^2\ d\Gamma dt.
\end{equation}
Using \eqref{cc1}, \eqref{cc2}, and the fact that
$$
(\nabla\varphi n)\cdot\tau=-\frac{\beta_\mathcal{S}}{\nu} (\varphi-\varphi_\mathcal{S})\cdot\tau-((\nabla\varphi)^* n)\cdot\tau,\quad \text{ on } \partial\mathcal{S},
$$
and
$$
(\nabla\varphi n)\cdot\tau=-\frac{\beta_\Omega}{\nu} \varphi\cdot\tau-((\nabla\varphi)^* n)\cdot\tau ,\quad \text{ on } \partial\Omega,
$$
we get,
\begin{multline}
\label{lol}
\nabla\times\varphi=-\frac{\beta_\mathcal{S}}{\nu}(\varphi-\varphi_S)\cdot\tau-2((\nabla\varphi)^*n)\cdot \tau\\=-\frac{\beta_\mathcal{S}}{\nu}(\varphi-\varphi_S)\cdot\tau-2 (\nabla(\varphi_\mathcal{S}\cdot n)\cdot\tau-\nabla n\tau\cdot\varphi),\quad \text{on }\partial\mathcal{S},
\end{multline}
\begin{equation}
\label{lol1}
\nabla\times\varphi=-\frac{\beta_\Omega}{\nu}\varphi\cdot\tau-2 ((\nabla\varphi)^*n)\cdot \tau=-\frac{\beta_\mathcal{S}}{\nu}\varphi\cdot\tau+2 (\nabla n\tau\cdot\varphi),\quad \text{on }\partial\Omega.
\end{equation}
Then, we have
$$
\nabla(\nabla\times\varphi)\tau=\nabla\left[-\frac{\beta_\mathcal{S}}{\nu}(\varphi-\varphi_S)\cdot\tau-2 (\nabla(\varphi_\mathcal{S}\cdot n)\cdot\tau-\nabla n\tau\cdot\varphi)\right]\tau,\quad \text{on }\partial\mathcal{S},
$$
and
$$
\nabla(\nabla\times\varphi)\tau=\nabla\left[-\frac{\beta_\Omega}{\nu}\varphi\cdot\tau+2 (\nabla n\tau\cdot\varphi)\right]\tau,\quad \text{on }\partial\Omega.
$$	
It implies
$$
\left| \nabla(\nabla\times\varphi)\tau\right| \leq C\left(\left| \nabla \varphi\right| +\left| \varphi_\mathcal{S}\right|+\left| \varphi\right| \right) ,\quad \text{on }\partial\mathcal{F}.
$$
Then
\begin{multline}
\label{eq16*}
s\lambda\int_0^T\int_{\partial\mathcal{F}}e^{-2s\widehat{\beta}}\xi^*\left| \nabla(\nabla\times\varphi)\tau\right|^2 \ d\Gamma  dt \leq C\bigg(s\lambda\int_0^T\int_{\partial\mathcal{F}}e^{-2s\widehat{\beta}}\xi^*\left| \nabla \varphi\right|^2  d\Gamma \ dt \\+s\lambda\int_0^T e^{-2s\widehat{\beta}}\xi^*\left( \left| \ell_\varphi\right|^2 +\left|k_\varphi\right|^2 \right)  \ dt+s\lambda\int_0^T\int_{\partial\mathcal{F}}e^{-2s\widehat{\beta}}\xi^*\left| \varphi\right|^2 \ d\Gamma  dt \bigg).
\end{multline}
The second and the third term in the right hand side of the above inequality can be absorbed using \eqref{eq20} by the left side of the inequality \eqref{eq14}.
To absorb the first term in the right hand side of the inequality \eqref{eq16*}, we use the elliptic estimate of the system \eqref{sy}, we obtain
	
\begin{multline}
\label{eq17*}
s\lambda\int_0^T\int_{\partial\mathcal{F}}e^{-2s\widehat{\beta}}\xi^*\left| \nabla \varphi\right|  d\Gamma \ dt  \leq C\bigg(s\lambda\int_0^T\int_{\mathcal{F}}e^{-2s\widehat{\beta}}\xi^*\left| \nabla(\nabla\times\varphi)\right|^2 \ dy dt\\+s\lambda\int_0^T e^{-2s\widehat{\beta}}\xi^*\left( \left| \ell_\varphi\right|^2 +\left|k_\varphi\right|^2 \right)  \ dt \bigg).
\end{multline}
The terms in the right hand side of \eqref{eq17*} can be absorbed by the left hand side of \eqref{eq14}, moreover the last term in the right side of \eqref{eq16*} can be manipulated as \eqref{eq20} and thus, it can be absorbed by the left side of \eqref{eq14}.	
To estimate the second term in \eqref{2terms}, observe that from \eqref{lol} and \eqref{lol1}, we have
\begin{multline}
\label{0}
s^{-1}\lambda^{-1}\int_0^T\int_{\partial\mathcal{F}}e^{-2s\widehat{\beta}}(\xi^*)^{-1}\left|\nabla \times\partial_t\varphi \right|^2\ d\Gamma dt\leq C\bigg(s^{-1}\lambda^{-1}\int_0^T\int_{\partial\mathcal{F}}e^{-2s\widehat{\beta}}(\xi^*)^{-1}\left|\partial_t\varphi \right|^2\ d\Gamma dt\\+s^{-1}\lambda^{-1}\int_0^Te^{-2s\widehat{\beta}}(\xi^*)^{-1}\left( \left| \ell'_\varphi\right|^2 +\left|k'_\varphi\right|^2 \right) \ dt\bigg). 
\end{multline}
	We take $\zeta_2(t)=s^{-1/2}\lambda^{-1/2}e^{-s\beta(t)}(\xi^*)^{-1/2}(t)$ and let consider the system
	\begin{equation*}
	\left\{
	\begin{array}{rl}
	-\partial_t (\zeta_2\varphi)-\nabla \cdot \mathbb{T}(\zeta_2\varphi,\zeta_2 q_\varphi)=-\zeta_2'\varphi -\zeta_2\rho' v & \text { in }(0,T)\times\mathcal{F},\\
	\nabla \cdot (\zeta_2\varphi)=0& \text { in }(0,T)\times\mathcal{F},\\
	m(\zeta_2 \ell_\varphi)'=\int_{\partial S}\mathbb{T}(\zeta_2\varphi,\zeta_2 q_\varphi)n\ d\Gamma+m\zeta_2'\ell_\varphi+m\zeta_2\rho' \ell_v  & t\in(0,T),\\
	J(\zeta_2 k_\varphi)'(t)=\int_{\partial S}y^\perp\cdot\mathbb{T}((\zeta_2\varphi),\zeta_2 q_\varphi)n \ d\Gamma+J\zeta_2'k_\varphi+J\zeta_2\rho' k_v& t\in(0,T),\\
	\end{array}
	\right.
	\end{equation*}
	with the boundary condition
	\begin{equation*}
	\left\{
	\begin{array}{rl}
	(\zeta_2\varphi)_n = 0  &  \text { on }(0,T)\times\partial\Omega,\\
	\left[ 2\nu D(\zeta_2\varphi) n+\beta_{\Omega} (\zeta_2\varphi)\right]_{\tau}=0&  \text { on }(0,T)\times\partial\Omega,\\
	(\zeta_2\varphi-\zeta_2 \varphi_\mathcal{S})_n=0&   \text { on }(0,T)\times\partial S,\\
	\left[ 2\nu D(\zeta_2\varphi) n + \beta_{\mathcal{S}} (\zeta_2\varphi-\zeta_2\varphi_\mathcal{S}  )\right]_{\tau}=0 &  \text { on }(0,T)\times\partial S,
	\end{array}
	\right.
	\end{equation*}
	completed with the initial condition
	\begin{equation*}
	(\zeta_2\varphi)(T,\cdot) =0,\; \text{ in } \mathcal{F},\quad
	(\zeta_2 \ell_\varphi)(T)=0, \quad (\zeta_2 k_\varphi)(T)=0.
	\end{equation*}
	We notice that in the above system all final conditions are equal to zero, then all the compatibility conditions mentioned in Proposition \ref{prop} are satisfied. To absorb the last two terms in the right hand side of \eqref{0}, we use $L^2$ regularity results of the system satisfied by $\zeta_2\varphi$. In fact, we have
	\begin{multline}
	\label{int*}
\left\|\zeta_2\varphi\right\|^2_{L^2(0,T;[H^{2}(\mathcal{F})]^2)}+\left\|\zeta_2\varphi\right\|^2_{H^1(0,T;[L^2(\mathcal{F})]^2)}+\left\|\zeta_2 \ell_\varphi \right\|^2_{[H^{1}(0,T)]^2}+\left\|\zeta_2 k_\varphi \right\|^2_{H^{1}(0,T)}\\\leq C \bigg( \left\| \zeta_2'\varphi\right\|^2_{L^2(0,T;[L^2(\mathcal{F})]^2)}+\left\|\zeta_2'\ell_\varphi \right\|^2_{[L^2(0,T)]^2}+\left\|\zeta_2'k_\varphi\right\|^2_{L^2(0,T)}\\+\left\| \zeta_2\rho' v\right\|^2_{L^2(0,T;[L^2(\mathcal{F})]^2)}+\left\|\zeta_2\rho '\ell_v \right\|^2_{[L^2(0,T)]^2}+\left\|\zeta_2\rho'k_v \right\|^2_{L^2(0,T)}  \bigg).  
	\end{multline}
We note that
\begin{equation}
\label{ah}
\left|\zeta_2'\right|\leq C s^{1/2}\lambda^{-1/2}(\xi^*)^{1/2+1/N}e^{-s\widehat{\beta}},\quad \left|\zeta_2\rho'\right|\leq C s^{1/2}\lambda^{-1/2}(\xi^*)^{1/2+1/N}e^{-s\widehat{\beta}}\rho.
\end{equation}	
Then, using \eqref{int*} and \eqref{ah}, we obtain
for $N\geq 1$
\begin{multline}
\label{ahh}
s^{-1}\lambda^{-1}\int_0^Te^{-2s\widehat{\beta}}(\xi^*)^{-1}\left( \left| \ell'_\varphi\right|^2 +\left|k'_\varphi\right|^2 \right) \ dt\leq C\bigg(s\lambda^{-1} \int_0^T\int_{\mathcal{F}} e^{-2s\widehat{\beta}} (\xi^*)^4 \left|\varphi\right|^2  \ dydt  \\+s\lambda^{-1}\int_0^Te^{-2s\widehat{\beta}}(\xi^*)^3\left|\ell_\varphi(t)\right|^2 dt+s\lambda^{-1}\int_0^Te^{-2s\widehat{\beta}}(\xi^*)^{3}\left| k_\varphi(t) \right|^2  \ dt\\+s\lambda^{-1} \int_0^T\int_{\mathcal{F}} e^{-2s\widehat{\beta}} (\xi^*)^4 \left|\rho v\right|^2  \ dydt \bigg).
\end{multline}
combining \eqref{decomp}, \eqref{reg} and using the fact that $s\lambda^{-1}e^{-2s\widehat{\beta}} (\xi^*)^4$ is bounded, we get
\begin{multline}
s\lambda^{-1} \int_0^T\int_{\mathcal{F}} e^{-2s\widehat{\beta}} (\xi^*)^4 \left|\rho v\right|^2  \ dydt\leq C \bigg(s\lambda^{-1} \int_0^T\int_{\mathcal{F}} e^{-2s\widehat{\beta}} (\xi^*)^4 \left|\varphi\right|^2  \ dydt+\int_0^T\int_{\mathcal{F}}e^{-3s\widehat{\beta}}\left| F_1 \right|^2  \ dydt\\+ \int_0^Te^{-3s\widehat{\beta}}\left| F_2 \right|^2 \ dt+ \int_0^Te^{-3s\widehat{\beta}}\left| F_3 \right|^2 \ dt\bigg).
\end{multline}
Then, we obtain
\begin{multline}
\label{ahhh}
s^{-1}\lambda^{-1}\int_0^Te^{-2s\widehat{\beta}}(\xi^*)^{-1}\left( \left| \ell'_\varphi\right|^2 +\left|k'_\varphi\right|^2 \right) \ dt\leq C\bigg(s\lambda^{-1} \int_0^T\int_{\partial\mathcal{F}} e^{-2s\widehat{\beta}} (\xi^*)^4 \left|\varphi\right|^2  \ dydt  \\+s\lambda^{-1}\int_0^Te^{-2s\widehat{\beta}}(\xi^*)^3\left|\ell_\varphi(t)\right|^2 dt+s\lambda^{-1}\int_0^Te^{-2s\widehat{\beta}}(\xi^*)^{3}\left| k_\varphi(t) \right|^2  \ dt+\int_0^T\int_{\mathcal{F}}e^{-3s\widehat{\beta}}\left| F_1 \right|^2  \ dydt\\+ \int_0^Te^{-3s\widehat{\beta}}\left| F_2 \right|^2 \ dt+ \int_0^Te^{-3s\widehat{\beta}}\left| F_3 \right|^2 \ dt\bigg).
\end{multline}
Now, we deal with the first term in the right hand side of \eqref{0}. Using \eqref{reg1} and \eqref{reg2} and by interpolation with parameter $3/8$, we get the following estimate
	\begin{multline}
	\label{int}
	\left\|\zeta_2\varphi\right\|^2_{H^1(0,T;[H^{3/4}(\mathcal{F})]^2)}\leq\left\|\zeta_2\varphi\right\|^2_{L^2(0,T;[H^{11/4}(\mathcal{F})]^2)}+\left\|\zeta_2\varphi\right\|^2_{H^{11/8}(0,T;[L^2(\mathcal{F})]^2)}+\left\|\zeta_2 \ell_\varphi \right\|^2_{[H^{11/8}(0,T)]^2}+\\\left\|\zeta_2 k_\varphi \right\|^2_{H^{11/8}(0,T)}\leq C \bigg( \left\| \zeta_2'\varphi\right\|^2_{H^{3/8}(0,T;[L^2(\mathcal{F})]^2)}+\left\| \zeta_2'\varphi\right\|^2_{L^2(0,T;[H^{3/4}(\mathcal{F})]^2)}+\left\|\zeta_2'\ell_\varphi \right\|^2_{[H^{3/8}(0,T)]^2}+\left\|\zeta_2'k_\varphi\right\|^2_{H^{3/8}(0,T)}\\+\left\| \zeta_2\rho' v\right\|^2_{H^{3/8}(0,T;[L^2(\mathcal{F})]^2)}+\left\| \zeta_2\rho' v\right\|^2_{L^{2}(0,T;[H^{3/4}(\mathcal{F})]^2)}+\left\|\zeta_2\rho '\ell_v \right\|^2_{[H^{3/8}(0,T)]^2}+\left\|\zeta_2\rho'k_v \right\|^2_{H^{3/8}(0,T)}  \bigg).  
	\end{multline}
	We note that
	$$
	\left|\zeta_2'\right|\leq C s^{1/2}\lambda^{-1/2}(\xi^*)^{1/2+1/N}e^{-s\widehat{\beta}},\quad
	\left|\zeta_2''\right|\leq C s^{3/2}\lambda^{-1/2}(\xi^*)^{3/2+2/N}e^{-s\widehat{\beta}},
	$$
		and
	$$
	\left|(\zeta_2\rho')'\right| \leq C s^{3/2}\lambda^{-1/2}(\xi^*)^{3/2+2/N}e^{-s\widehat{\beta}}\rho.
	$$
	Using the trace theorem, we have
	$$
	s^{-1}\lambda^{-1}\int_0^T\int_{\partial\mathcal{F}}e^{-2s\widehat{\beta}}(\xi^*)^{-1}\left|\partial_t\varphi \right|^2\ d\Gamma dt\leq C\left\|\zeta_2 \partial_t\varphi \right\|^2_{L^2(0,T,[H^{3/4}(\mathcal{F})]^2)}.
	$$
	Let estimate the terms in the right hand side of \eqref{int}. We have
	\begin{multline}
	\label{int2}
	\left\| \zeta_2'\varphi\right\|^2_{L^2(0,T;[H^{3/4}(\mathcal{F})]^2)}\leq \left\| \zeta_2'\varphi\right\|^2_{L^2(0,T;[H^{1}(\mathcal{F})]^2)}\leq C s\lambda^{-1}\bigg( \int_0^T\int_{\mathcal{F}} e^{-2s\widehat{\beta}} (\xi^*)^2 \left|\varphi\right|^2  \ dydt+\\ \int_0^T\int_{\mathcal{F}} e^{-2s\widehat{\beta}} (\xi^*)^{2} \left|\nabla\varphi\right|^2  \ dydt\bigg).
	\end{multline}
	The terms appearing in the right hand side of \eqref{int2} can be absorbed by the left hand side of the Carleman inequality \eqref{eq14}.
	In the other hand, we have 
	$$
	\left\|\zeta'_2\varphi\right\|^2_{L^2(0,T;[L^2(\mathcal{F})]^2)}\leq Cs\lambda^{-1}\int_0^T\int_{\mathcal{F}} e^{-2s\widehat{\beta}} (\xi^*)^{1+2/N} \left|\varphi\right|^2  \ dydt,
	$$
	and 
\begin{multline*}
\left\|\zeta'_2\varphi\right\|^2_{H^1(0,T;[L^2(\mathcal{F})]^2)}\leq C\bigg( s\lambda^{-1}\int_0^T\int_{\mathcal{F}} e^{-2s\widehat{\beta}} (\xi^*)^{1+2/N} \left|\partial_t\varphi\right|^2  \ dydt\\+s^3\lambda^{-1}\int_0^T\int_{\mathcal{F}} e^{-2s\widehat{\beta}} (\xi^*)^{3+4/N} \left|\varphi\right|^2  \ dydt\bigg) .
\end{multline*}
By an interpolation argument, we get
\begin{multline}
\label{16:56}
\left\|\zeta'_2\varphi\right\|^2_{H^{3/8}(0,T;[L^2(\mathcal{F})]^2)}\leq C\lambda^{-1}\bigg(\int_0^T\int_{\mathcal{F}} e^{-2s\widehat{\beta}} (s(\xi^*)^{1+2/N} \left|\partial_t\varphi\right|^2)^{3/8} (s(\xi^*)^{1+2/N} \left|\varphi\right|^2)^{5/8}  \ dydt\\+ \int_0^T\int_{\mathcal{F}} e^{-2s\widehat{\beta}} (s(\xi^*)^{1+2/N} \left|\varphi\right|^2)^{5/8} (s^3(\xi^*)^{3+4/N} \left|\varphi\right|^2)^{3/8}  \ dydt\bigg).
\end{multline}
	We rewrite the right hand side of the inequality \eqref{16:56} to obtain
	\begin{multline}
	\label{20:52}
	\left\|\zeta'_2\varphi\right\|^2_{H^{3/8}(0,T;[L^2(\mathcal{F})]^2)}\leq C\lambda^{-1}\bigg(\int_0^T\int_{\mathcal{F}} e^{-2s\widehat{\beta}} (s^{-1}(\xi^*)^{-1} \left|\partial_t\varphi\right|^2)^{3/8} (s^{11/5}(\xi^*)^{11/5+16/5N} \left|\varphi\right|^2)^{5/8}  \ dydt\\+ \int_0^T\int_{\mathcal{F}} e^{-2s\widehat{\beta}} (s(\xi^*)^{1+2/N} \left|\varphi\right|^2)^{5/8} (s^3(\xi^*)^{3+4/N} \left|\varphi\right|^2)^{3/8}  \ dydt\bigg).
	\end{multline}		
	Applying again Young's inequality, we get for $N\geq 4$
	\begin{multline}
	\label{1}
	\left\| \zeta_2'\varphi\right\|^2_{H^{3/8}(0,T;[L^2(\mathcal{F})]^2)}\leq C\bigg(  s^4\lambda^{-1}\int_0^T\int_{\mathcal{F}} e^{-2s\widehat{\beta}} (\xi^*)^4 \left|\varphi\right|^2  \ dydt\\+\varepsilon s^{-1}\lambda^{-1}\int_0^T\int_{\mathcal{F}} e^{-2s\widehat{\beta}} (\xi^*)^{-1} \left|\partial_t\varphi\right|^2  \ dydt\bigg).
	\end{multline}
	The first term in the right hand side of \eqref{1} can be absorbed by the left hand side of the Carleman inequality \eqref{eq14} while the second term is absorbed by the left hand side of \eqref{int}.
	
	Using again an interpolation argument, we obtain similarly,
	\begin{multline}
	\label{17:34}
	\left\|\zeta'_2\ell_\varphi\right\|^2_{H^{3/8}(0,T)}\leq  C\lambda^{-1}\bigg(\int_0^T e^{-2s\widehat{\beta}} (s(\xi^*)^{1+2/N} \left|\ell'_\varphi\right|^2)^{3/8} (s(\xi^*)^{1+2/N} \left|\ell_\varphi\right|^2)^{5/8}  \ dt\\+ \int_0^T e^{-2s\widehat{\beta}} (s(\xi^*)^{1+2/N} \left|\ell_\varphi\right|^2)^{5/8} (s^3(\xi^*)^{3+4/N} \left|\ell_\varphi\right|^2)^{3/8}  \ dt\bigg).
	\end{multline}
	The left hand side of \eqref{17:34} can be rewritten as
	\begin{multline}
	\label{18:05}
	\left\|\zeta'_2\ell_\varphi\right\|^2_{H^{3/8}(0,T)}\leq C\lambda^{-1}\bigg(\int_0^T e^{-2s\widehat{\beta}} (s^{-1}(\xi^*)^{-1} \left|\ell'_\varphi\right|^2)^{3/8} (s^{11/5}(\xi^*)^{11/5+16/5N} \left|\ell_\varphi\right|^2)^{5/8}  \ dt\\+ \int_0^T e^{-2s\widehat{\beta}} (s(\xi^*)^{1+22/5N} \left|\ell_\varphi\right|^2)^{5/8} (s^3(\xi^*)^{3} \left|\ell_\varphi\right|^2)^{3/8}  \ dt\bigg).
	\end{multline}
	Then, for $N\geq 4$ we get
	\begin{equation}
	\label{2}
	\left\|\zeta_2'\ell_\varphi \right\|^2_{[H^{3/8}(0,T)]^2}\leq C\bigg(  s^3\lambda^{-1}\int_0^T e^{-2s\widehat{\beta}} (\xi^*)^3 \left|\ell_\varphi\right|^2 \ dt\\+\varepsilon s^{-1}\lambda^{-1}\int_0^T e^{-2s\widehat{\beta}} (\xi^*)^{-1} \left|\ell'_\varphi\right|^2 \ dt\bigg).
	\end{equation}
	The first term in the right hand side of \eqref{2} can be absorbed by the left hand side of the Carleman inequality \eqref{eq14} while the second term is absorbed by the left hand side of \eqref{int}.
	
	In the other hand,
	\begin{multline}
	\label{3}
	\left\|\zeta_2'k_\varphi \right\|^2_{H^{3/8}(0,T)}\leq C\lambda^{-1}\bigg(\int_0^T e^{-2s\widehat{\beta}} (s^{-1}(\xi^*)^{-1} \left|k'_\varphi\right|^2)^{3/8} (s^{11/5}(\xi^*)^{11/5+16/5N} \left|k_\varphi\right|^2)^{5/8}  \ dt\\+ \int_0^T e^{-2s\widehat{\beta}} (s(\xi^*)^{1+2/N} \left|k_\varphi\right|^2)^{5/8} (s^3(\xi^*)^{3+4/N} \left|k_\varphi\right|^2)^{3/8}  \ dt\bigg),
	\end{multline}
	that can be rewritten as
	\begin{multline}
	\label{4}
	\left\|\zeta_2'k_\varphi \right\|^2_{H^{3/8}(0,T)}\leq C\lambda^{-1}\bigg(\int_0^T e^{-2s\widehat{\beta}} (s^{-1}(\xi^*)^{-1} \left|k'_\varphi\right|^2)^{3/8} (s^{11/5}(\xi^*)^{11/5+16/5N} \left|k_\varphi\right|^2)^{5/8}  \ dt\\+ \int_0^T e^{-2s\widehat{\beta}} (s(\xi^*)^{1+22/5N} \left|k_\varphi\right|^2)^{5/8} (s^3(\xi^*)^{3} \left|k_\varphi\right|^2)^{3/8}  \ dt\bigg).
	\end{multline}
	Then, for $N\geq 4$, we get
	\begin{equation}
	\label{5}
	\left\|\zeta_2'k_\varphi \right\|^2_{H^{3/8}(0,T)}\leq C\bigg(s^3\lambda^{-1}\int_0^T e^{-2s\widehat{\beta}} (\xi^*)^{3} \left|k_\varphi\right|^2 \ dt\\+    \varepsilon s^{-1}\lambda^{-1}\int_0^T e^{-2s\widehat{\beta}} (\xi^*)^{-1} \left|k'_\varphi\right|^2 \ dt\bigg).
	\end{equation}
	The first term in the right hand side of \eqref{5} can be absorbed by the left hand side of the Carleman inequality \eqref{eq14} while the second term is absorbed by the left hand side of \eqref{int}.
	On the other hand, we notice that $\left|\zeta_2\rho'\right|\leq Cs^{1/2}\lambda^{-1/2}e^{-s\widehat{\beta}}(\xi^*)^{1/2+1/N}\rho $ and we get as for \eqref{0}
	\begin{multline*}
	\int_0^T\left| \zeta_2\rho'\right|^2 \left\|  v\right\|^2_{[H^{3/4}(\mathcal{F})]^2)}\ dt\leq Cs\lambda^{-1}\int_0^T e^{-2s\widehat{\beta}}(\xi^*)^{1+2/N}\left\|\rho v\right\|^2_{[H^{3/4}(\mathcal{F})]^2} \ dt
	\\\leq C s\lambda^{-1}\bigg( \int_0^T\int_{\mathcal{F}} e^{-2s\widehat{\beta}} (\xi^*)^2 \left|\rho v\right|^2  \ dydt+ \int_0^T\int_{\mathcal{F}} e^{-2s\widehat{\beta}} (\xi^*)^{2} \left|\nabla(\rho v)\right|^2  \ dydt\bigg).
	\end{multline*}
	Using the decomposition \eqref{decomp} and the regularity estimate \eqref{reg}, we deduce from the above inequality 
	\begin{multline}
	\label{eq11}
	\left\| \zeta_2\rho' v\right\|^2_{L^2(0,T;[H^{3/4}(\mathcal{F})]^2)}\leq C\bigg( s\lambda^{-1} \int_0^T\int_{\mathcal{F}} e^{-2s\widehat{\beta}} (\xi^*)^2 \left|\varphi\right|^2  \ dydt+s\lambda^{-1} \int_0^T\int_{\mathcal{F}} e^{-2s\beta} (\xi^*)^2 \left|\nabla\varphi\right|^2  \ dydt\\+ \int_0^T\int_{\mathcal{F}}e^{-3s\widehat{\beta}}\left| F_1 \right|^2  \ dydt+ \int_0^Te^{-3s\widehat{\beta}}\left| F_2 \right|^2 \ dt+ \int_0^Te^{-3s\widehat{\beta}}\left| F_3 \right|^2 \ dt\bigg).
	\end{multline}
	The first and the second term in the right hand side of \eqref{eq11} can be absorbed by the left hand side of the Carleman inequality \eqref{eq14}.
	
	Since $\rho'=-\frac{3}{2}s(\widehat{\beta})'\rho$, we have
	\begin{multline*}
	\left\| \zeta_2\rho' v\right\|^2_{H^1(0,T;[L^2(\mathcal{F})]^2)}\leq C s^2\bigg( \left\|\zeta_2(\widehat{\beta})'\rho v \right\|_{L^2(0,T;[L^2(\mathcal{F})]^2)}^2+\left\|(\zeta_2(\widehat{\beta})')'\rho v \right\|_{L^2(0,T;[L^2(\mathcal{F})]^2)}^2\\+ \left\|\zeta_2(\widehat{\beta})'\partial_t(\rho v)\right\|^2_{L^2(0,T;[L^2(\mathcal{F})]^2)} \bigg)\\\leq C \left( s\lambda^{-1} \int_0^T\int_{\mathcal{F}} e^{-2s\widehat{\beta}} (\xi^*)^{1+2/N} \left|\partial_t(\rho v)\right|^2  \ dydt+s^3\lambda^{-1} \int_0^T\int_{\mathcal{F}} e^{-2s\widehat{\beta}} (\xi^*)^{3+4/N} \left|\rho v\right|^2  \ dydt \right),
	\end{multline*}
	where we have used that
	$$
	\left|\zeta_2(\widehat{\beta})'\right|^2 \leq Cs^{-1}\lambda^{-1} e^{-2s\widehat{\beta}}(\xi^*)^{1+2/N},
	$$
	and
	$$
	\left|(\zeta_2(\widehat{\beta})')'\right|^2 \leq Cs\lambda^{-1} e^{-2s\widehat{\beta}}(\xi^*)^{3+4/N}.
	$$
	Using interpolation arguments and the Young inequality, we find as for $\left\|\zeta'_2\varphi\right\|^2_{H^{3/8}(0,T;[L^2(\mathcal{F})]^2)}$
	\begin{multline}
	\label{eq12}
	\left\| \zeta_2\rho' v\right\|^2_{H^{3/8}(0,T;[L^2(\mathcal{F})]^2)}\leq C\bigg(  s^4\lambda^{-1}\int_0^T\int_{\mathcal{F}} e^{-2s\widehat{\beta}} (\xi^*)^4 \left|\rho v\right|^2  \ dydt\\+\varepsilon s^{-1}\lambda^{-1}\int_0^T\int_{\mathcal{F}} e^{-2s\widehat{\beta}} (\xi^*)^{-1} \left|\partial_t(\rho v)\right|^2  \ dydt\bigg).
	\end{multline}
We get also
	\begin{equation}
	\label{21}
	\left\|\zeta_2\rho'\ell_v \right\|^2_{[H^{3/8}(0,T)]^2}\leq C\bigg(  s^3\lambda^{-1}\int_0^T e^{-2s\widehat{\beta}} (\xi^*)^3 \left|\rho\ell_v\right|^2 \ dt\\+\varepsilon s^{-1}\lambda^{-1}\int_0^T e^{-2s\widehat{\beta}} (\xi^*)^{-1} \left|(\rho\ell_v)'\right|^2 \ dt\bigg).
	\end{equation}
	and
	\begin{equation}
	\label{31}
	\left\|\zeta_2\rho'k_v \right\|^2_{[H^{3/8}(0,T)]^2}\leq C\bigg(  \varepsilon s^{-1}\lambda^{-1}\int_0^T e^{-2s\widehat{\beta}} (\xi^*)^{-1} \left|(\rho k_v)'\right|^2 \ dt\\+ s^3\lambda^{-1}\int_0^T e^{-2s\widehat{\beta}} (\xi^*)^{3} \left|\rho k_v\right|^2 \ dt\bigg).
	\end{equation}
	From \eqref{decomp}, \eqref{reg}, \eqref{int}, \eqref{1}, \eqref{2}, \eqref{3}, \eqref{eq11}, \eqref{eq12}, \eqref{21}, \eqref{31} and \eqref{ahhh}, we deduce that for $N\geq 4$, $\lambda\geq C$ and $s\geq C(T^N+T^{2N})$, we obtain 
	\begin{multline}
	\label{eq15}
	s^{-1}\lambda^{-1}\int_0^T\int_{\partial\mathcal{F}}e^{-2s\widehat{\beta}}(\xi^*)^{-1}\left|\partial_t\varphi \right|^2\ d\Gamma dt\leq C\bigg( s^4\lambda^{-1} \int_0^T\int_{\mathcal{F}} e^{-2s\widehat{\beta}} (\xi^*)^4 \left|\varphi\right|^2  \ dydt  \\+s^3\lambda^{-1}\int_0^Te^{-2s\widehat{\beta}}(\xi^*)^3\left|\ell_\varphi(t)\right|^2 dt+s^3\lambda^{-1}\int_0^Te^{-2s\widehat{\beta}}(\xi^*)^{3}\left| k_\varphi(t) \right|^2  \ dt+\int_0^T\int_{\mathcal{F}}e^{-3s\widehat{\beta}}\left| F_1 \right|^2  \ dydt\\+ \int_0^Te^{-3s\widehat{\beta}}\left| F_2 \right|^2 \ dt+ \int_0^Te^{-3s\widehat{\beta}}\left| F_3 \right|^2 \ dt\bigg).
	\end{multline}

	Combining \eqref{eq14}, \eqref{eq15} and \eqref{decomp}, we get finally \eqref{carl} for $N\geq 4$, $\lambda\geq C$ and $s\geq C(T^N+T^{2N})$. 
\end{proof}
\section{Null controllability for the linearized system}
\label{contrl}
In this section, we prove the null controllability of the linear system 
\begin{equation}
\label{sfl1}
\left\{
\begin{array}{rl}
\partial_t \overline{u}-\nabla\cdot \mathbb{T}(\overline{u},\pi)= v^*1_{\mathcal{O}}+F_1  & \text { in }(0,T)\times\mathcal{F},\\
\nabla \cdot \overline{u}=0& \text { in }(0,T)\times\mathcal{F},\\
\end{array}
\right.
\end{equation}
\begin{equation}
\label{sfl2}
\left\{
\begin{array}{rl}
m\overline{h}''(t)=-\int_{\partial \mathcal{S}}\mathbb{T}(\overline{u},\pi)n \ d\Gamma +F_2 & t\in(0,T),\\
J\overline{\theta}''(t)=-\int_{\partial \mathcal{S}}y^\perp\cdot\mathbb{T}(\overline{u},\pi)n\ d\Gamma+F_3& t\in(0,T),\\
\end{array}
\right.
\end{equation}
\begin{equation}
\label{sfl3}
\left\{
\begin{array}{rl}
\overline{u}_{n}= 0  &  \text { on }(0,T)\times\partial\Omega,\\
\left[ 2\nu D(\overline{u})n+\beta_{\Omega} \overline{u}\right]_{\tau}=0&  \text { on }(0,T)\times\partial\Omega,\\
(\overline{u}-\overline{u}_\mathcal{S})_{n}=0&   \text { on }(0,T)\times\partial \mathcal{S},\\
\left[ 2\nu D(\overline{u}) n + \beta_{\mathcal{S}} \left(\overline{u}-\overline{u}_\mathcal{S} \right)\right]_{\tau}=0 &  \text { on }(0,T)\times\partial \mathcal{S},
\end{array}
\right.
\end{equation}
with the initial conditions
\begin{equation}
\label{initialc}
\overline{u}(0)=u_0,\quad \overline{h}'(0)=\ell^0,\quad \overline{\theta}'(0)=\omega^0,\quad \overline{h}(0)=h^0, \quad\overline{\theta}(0)=\theta^0.
\end{equation}
The system \eqref{sfl1},\eqref{sfl2}, \eqref{sfl3}, \eqref{initialc} can be written as
\begin{equation}
\label{ls}
\left\{
\begin{array}{c}
Z'(t)=AZ'(t)+Bv^*+F,\\
a'(t)=\mathcal{C}Z(t),\\
Z(0)=Z^0,\\
a(0)=a^0,
\end{array}
\right.
\end{equation}
where $A$ is defined as in section \ref{notation} and 
$$B=\mathbb{P}(v^*1_{\mathcal{O}}), \quad F=\mathbb{P}\left( F_11_{\mathcal{F}}+\left( \frac{F_2}{m}+\frac{F_3y^\perp}{J}\right) 1_{\mathcal{S}}\right).$$
The vector $a$ is defined by $a=(\overline{h},\overline{\theta})$ and we define the operator $\mathcal{C}$ for $Z\in\mathbb{H}$ as
$$
\mathcal{C}Z=(\ell_{\overline{u}},k_{\overline{u}}), \quad\text{ if } Z=\ell_{\overline{u}}+k_{\overline{u}} y^\perp \text{ in } \mathcal{S},
$$
where $\overline{h}'=\ell_{\overline{u}}$, $\overline{\theta}'=k_{\overline{u}}$. The equation $Z(0)=Z^0$ corresponds to the initial conditions $\overline{u}(0)=u_0$, $\overline{h}'(0)=\ell^0$, $ \overline{\theta}'(0)=\omega^0$ and  the vector $a^0$ corresponds to $a^0=(h^0,\theta^0)$.

Now, let us fix $\lambda\geq C$, $s\geq C(T^N+T^{2N})$ and let consider 
\begin{equation}
\rho_1(t)=\left\{
\begin{array}{cc}
s^{3/2}\lambda^{3/2}e^{-\frac{5}{2}s\widehat{\beta}(T/2)}(\xi^*(T/2))^{3/2} & t\in (0,T/2),\\
s^{3/2}\lambda^{3/2}e^{-\frac{5}{2}s\widehat{\beta}(t)}(\xi^*(t))^{3/2} & t\in (T/2,T),
\end{array}
\right.
\end{equation}
\begin{equation}
\rho_2(t)=\left\{
\begin{array}{cc}
e^{-\frac{3}{2}s\widehat{\beta}(T/2)} & t\in (0,T/2),\\
e^{-\frac{3}{2}s\widehat{\beta}(t)}& t\in (T/2,T),
\end{array}
\right.
\end{equation}
\begin{equation}
\rho_3(t)=\left\{
\begin{array}{cc}
s^{5/2}\lambda^3e^{-s\beta^*(T/2)-\frac{3}{2}s\widehat{\beta}(T/2)}(\widehat{\xi}(T/2))^{5/2} & t\in (0,T/2),\\
s^{5/2}\lambda^3e^{-s\beta^*(t)-\frac{3}{2}s\widehat{\beta}(t)}(\widehat{\xi}(t))^{5/2}& t\in (T/2,T),
\end{array}
\right.
\end{equation}
and
\begin{equation}
\rho_4(t)=\left\{
\begin{array}{cc}
e^{-\frac{11}{8}s\widehat{\beta}(T/2)}& t\in (0,T/2),\\
e^{-\frac{11}{8}s\widehat{\beta}(t)}& t\in (T/2,T).
\end{array}
\right.
\end{equation}
We notice that $\rho_i$ are continuous positive functions such that $\rho_i(T)=0$.

Let define the following spaces 
$$
\mathcal{H}=\left\lbrace f\in L^2(0,T;\mathbb{H})\;|\; \frac{f}{\rho_1}\in L^2(0,T;\mathbb{H})\right\rbrace,
$$
$$
\mathcal{Z}=\left\lbrace z\in L^2(0,T;\mathbb{H})\;|\; \frac{z}{\rho_2}\in L^2(0,T;\mathbb{H})\right\rbrace,
$$
$$
\mathcal{U}=\left\lbrace v^*\in L^2(0,T;[L^2(\mathcal{O})]^2)\;|\; \frac{v^*}{\rho_3}\in L^2(0,T;[L^2(\mathcal{O})]^2)\right\rbrace. 
$$
Now, we can state the null controllability of the linearized system \eqref{ls}
\begin{Proposition}
\label{nulc}
Let $\beta_{\mathcal{S}}>0$. There exists a linear bounded operator $E_T:\mathbb{H}\times\mathbb{R}^3\times \mathcal{F}\longrightarrow \mathcal{U}$ such that for any $(Z^0,a^0,F)\in \mathbb{H}\times\mathbb{R}^3\times \mathcal{F}$, the control $v^*=E_T(Z^0,a^0,F)$ is such that the solution $(Z,a)$ of \eqref{ls} satisfies $Z\in \mathcal{Z}$ and $a(T)=0$.
Moreover, if $Z^0\in \mathcal{D}((-A)^{1/2})$, then 
$$
\frac{Z}{\rho_4}\in L^2(0,T;\mathcal{D}(A))\cap C([0,T];\mathcal{D}((-A)^{1/2}))\cap H^1(0,T;\mathbb{H})
$$
and we have the estimate
\begin{equation}
\label{estim}
\left\|\frac{Z}{\rho_4}\right\|_{L^2(0,T;\mathcal{D}(A))\cap C([0,T];\mathcal{D}((-A)^{1/2}))\cap H^1(0,T;\mathbb{H})} \leq C \left( \left\|F \right\|_\mathcal{H}+\left|a^0 \right|+\left\|Z^0\right\|_{\mathcal{D}((-A)^{1/2})}    \right) .
\end{equation}
\end{Proposition}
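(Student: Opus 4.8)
The plan is to prove Proposition~\ref{nulc} by the Fursikov--Imanuvilov variational form of the Hilbert Uniqueness Method, using the Carleman estimate \eqref{carl} as the observability inequality, and to control the velocity part $Z$ (with the prescribed weighted decay $Z/\rho_2\in L^2(0,T;\mathbb H)$) and the finite-dimensional position $a=(\overline h,\overline\theta)$ simultaneously. First I would write the adjoint of the cascade \eqref{ls}: it is a pair $(\phi,\mu)$ in which $\mu\in\mathbb R^3$ is constant in time --- it plays the role of the Lagrange multiplier for the constraint $a(T)=0$ --- and $\phi=(v,\ell_v,k_v)$ solves the backward system \eqref{aSl1}--\eqref{aSNBCl1} with $F_1=0$ and with the constant rigid-body forcings $F_2=\mu_1$, $F_3=\mu_2$, i.e. $-\partial_t\phi=A\phi+\mathcal C^*\mu$. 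Pairing \eqref{ls} with this system and integrating by parts in time (all the fluid and interface terms cancelling thanks to the Navier conditions and the self-adjointness of $A$) gives, for any solution of \eqref{ls} driven by $v^*$,
\begin{equation*}
\langle Z(T),\phi(T)\rangle_{\mathbb H}+a(T)\cdot\mu=\langle Z^0,\phi(0)\rangle_{\mathbb H}+a^0\cdot\mu+\int_0^T\!\!\int_{\mathcal O}v^*\cdot v\ dy\,dt+\int_0^T\langle F,\phi\rangle_{\mathbb H}\,dt .
\end{equation*}
Thus producing $v^*\in\mathcal U$ with $Z\in\mathcal Z$ and $a(T)=0$ amounts to solving this identity against every adjoint datum.

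Second, I would set up the variational problem. One introduces the Hilbert space $P$ obtained by completing the smooth adjoint trajectories $(\phi,\mu)$ under a Carleman-weighted norm for which \eqref{carl} yields coercivity, together with a continuous bilinear form on $P$ --- morally $\int_0^T\!\int_{\mathcal O}\rho_3^2\,v\cdot\tilde v$ plus a weighted pairing of the residuals, $\rho_3$ being the weight defining $\mathcal U$ --- whose coercivity is exactly the content of \eqref{carl}. Here the identity $\rho_1^2=s^3\lambda^3e^{-5s\widehat\beta}(\xi^*)^3$, which coincides with the weights on the left-hand side of \eqref{carl}, is what lets one bound $\|\rho_1\phi\|_{L^2(0,T;\mathbb H)}$ and thereby absorb the linear terms $\langle Z^0,\phi(0)\rangle$ and $\int_0^T\langle F,\phi\rangle$ with $Z^0\in\mathbb H$ and $F\in\mathcal H$; the remaining linear term $a^0\cdot\mu$ is dealt with by recovering $|\mu|$ from the adjoint ODEs $m\ell_v'=\int_{\partial\mathcal S}\mathbb T(v,q)n\,d\Gamma+\mu_1$, $Jk_v'=\int_{\partial\mathcal S}y^\perp\cdot\mathbb T(v,q)n\,d\Gamma+\mu_2$ on a central interval $[T/4,3T/4]$ where the weights are bounded above and below and where \eqref{carl} provides interior regularity of $(v,q)$. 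By Lax--Milgram the bilinear form has a unique minimizer $(\widehat\phi,\widehat\mu)$, and the Euler--Lagrange equation identifies the control $v^*=E_T(Z^0,a^0,F)$ (proportional to $\rho_3^2\,\widehat v\,1_{\mathcal O}$), which is linear and bounded in $(Z^0,a^0,F)\in\mathbb H\times\mathbb R^3\times\mathcal H$; the associated state of \eqref{ls} then satisfies $Z\in\mathcal Z$ and $a(T)=0$. The weights $\rho_1,\rho_2,\rho_3$ are tuned precisely so that the control lands in $\mathcal U$ and the state in $\mathcal Z$, and it is here that $\beta_\mathcal S>0$ enters, through the control of $\ell_v,k_v$ on the left of \eqref{carl}.

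Third, for the extra regularity when $Z^0\in\mathcal D((-A)^{1/2})$, I would pass to the unknown $\widetilde Z=Z/\rho_4$. Since $\rho_4$ is a positive constant on $(0,T/2)$, one has $\widetilde Z(0)=Z^0/\rho_4(0)\in\mathcal D((-A)^{1/2})$, and $\widetilde Z$ solves $\widetilde Z'=A\widetilde Z+G$ with $G=-(\rho_4'/\rho_4)\widetilde Z+\rho_4^{-1}(Bv^*+F)$. A direct inspection of the weight ratios shows that $\rho_4'\rho_2/\rho_4^2$, $\rho_3/\rho_4$ and $\rho_1/\rho_4$ are all bounded on $(0,T)$ --- in each the only singular factor is a power of $1/(t^N(T-t)^N)$ multiplied by a negative exponential of that same quantity --- so, using $Z/\rho_2\in L^2$, $v^*\in\mathcal U$ and $F\in\mathcal H$, we obtain $G\in L^2(0,T;\mathbb H)$ with $\|G\|_{L^2(0,T;\mathbb H)}\le C(\|F\|_{\mathcal H}+|a^0|+\|Z^0\|_{\mathcal D((-A)^{1/2})})$. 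Applying Theorem~\ref{regu} (the maximal $L^2$-regularity of the linear fluid--rigid system, abstractly $L^2(0,T;\mathcal D(A))\cap C([0,T];\mathcal D((-A)^{1/2}))\cap H^1(0,T;\mathbb H)$) to $\widetilde Z$ yields $\widetilde Z$ in that space together with the estimate \eqref{estim}.

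The step I expect to be the main obstacle is the simultaneous control of the position: the constant multiplier $\mu$ enters the adjoint through the constant sources $F_2,F_3$ and hence produces a term of size $C|\mu|^2$ on the \emph{right-hand} side of \eqref{carl}, which is not directly absorbed, so closing the coercivity of the bilinear form requires the quantitative recovery of $\mu$ from the rigid-body ODEs --- a step that genuinely uses both the interior parabolic regularity contained in \eqref{carl} and the hypothesis $\beta_\mathcal S>0$. The other, more technical, difficulty is the weight bookkeeping: checking that every ratio among the Carleman weights $e^{-s\beta},e^{-s\widehat\beta},\xi^*,\widehat\xi$ and the four controllability weights $\rho_1,\dots,\rho_4$ is bounded, which is exactly why the $\rho_i$ were frozen on $(0,T/2)$ and assigned their specific powers of $s$, $\lambda$, $\xi^*$ and $\widehat\xi$.
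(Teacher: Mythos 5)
Most of your plan matches the paper: the duality/HUM formulation you describe is exactly the observability inequality the paper reduces to (following \cite[Theorem 4.1]{MR2317341}), the handling of the frozen weights on $(0,T/2)$ via energy estimates with a time cutoff is the same, and your treatment of the second part (writing $Z/\rho_4$ as a solution of the system with source controlled through the boundedness of $\rho_1/\rho_4$, $\rho_3/\rho_4$ and $\rho_4'\rho_2/\rho_4^2$, then invoking maximal regularity) is precisely the paper's argument, which it delegates to \cite[Corollary 4.3]{MR2317341}.

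The genuine gap is the step you yourself flag as the main obstacle: the bound $\left|\gamma^2\right|^2\leq C\left(\int_0^T\left\|\rho_2\gamma^1\right\|^2_{\mathbb H}\,dt+\int_0^T\!\!\int_{\mathcal O}\left|\rho_3 v\right|^2\,dy\,dt\right)$ for the constant multiplier $\mu=\gamma^2$ attached to the position constraint. Your proposed ``quantitative recovery of $\mu$ from the rigid-body ODEs on a central interval'' is circular as stated: to estimate $\ell = m\ell_v'-\int_{\partial\mathcal S}\mathbb T(v,q)n\,d\Gamma-\ell_{\gamma^1}$ you need $\ell_v'$, the boundary stress (hence $H^2\times H^1$ regularity of $(v,q)$ on that interval), and these are obtained from the parabolic regularity of the adjoint system whose \emph{source} is $\gamma^1+\mathcal C^*\gamma^2$; the resulting estimate has the form $\left|\mu\right|\leq C(\cdots)+C'\left|\mu\right|$ with no small parameter available to absorb $C'\left|\mu\right|$ (shrinking or enlarging the time interval does not produce one), and the Carleman estimate \eqref{carl} applied to the adjoint likewise carries $C\left|\mu\right|^2$ on its right-hand side through $F_2,F_3$. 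Note also that $\beta_{\mathcal S}>0$ does not enter the ODEs at all, so it cannot power the mechanism you invoke. The paper closes this step non-constructively, by compactness--uniqueness: assuming the inequality fails along a sequence normalized by $\left|\gamma^2_\kappa\right|=1$ with vanishing right-hand side, it extracts weak limits, applies the unique continuation property of the Stokes system on $(0,T-\varepsilon)\times\mathcal F$ to get $v=\nabla q=0$, and only then uses $\beta_{\mathcal S}>0$ in the Navier boundary conditions to force $\ell_v=k_v=0$, so that the structure equations yield $\gamma^2=0$, contradicting the normalization. Without this (or some other genuinely new ingredient such as a quantitative unique continuation estimate), your coercivity for the variational problem does not close, and the Lax--Milgram construction of $E_T$ is not justified.
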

\begin{proof}
The second part of Proposition \ref{nulc} comes from the fact that 
$$
\frac{\rho_i}{\rho_4}\in L^\infty(0,T) \text{ for } i=1,3,\quad \left| \frac{\rho_4'\rho_2}{(\rho_4)^2}\right| \leq \widehat{C} e^{-\frac{1}{8}s\widehat{\beta}(t)}\leq C ,\quad t\in [0,T].
$$
Then, using \cite[Corollary 4.3]{MR2317341}, we get
$$
\frac{Z}{\rho_4}\in L^2(0,T;\mathcal{D}(A))\cap C([0,T];\mathcal{D}((-A)^{1/2}))\cap H^1(0,T;\mathbb{H}),
$$
such that \eqref{estim} is satisfied.

Let us prove the first part. The proof is similar to \cite[Theorem 4.4]{MR2317341}. The adjoint system associated to the linear system \eqref{sfl1}, \eqref{sfl2} and \eqref{sfl3} can be written as
\begin{equation}
\label{hey}
\left\{
\begin{array}{rl}
-\partial_t v-\nabla \cdot \mathbb{T}(v,q)=\gamma^1  & \text { in }(0,T)\times\mathcal{F},\\
\nabla \cdot v=0& \text { in }(0,T)\times\mathcal{F},\\
m\ell_v'(t)=\int_{\partial \mathcal{S}}\mathbb{T}( v, q)n\ d\Gamma+\ell_{\gamma^1}+\ell  & t\in(0,T),\\
J(k_v'(t))=\int_{\partial \mathcal{S}}y^\perp\cdot\mathbb{T}( v, q)n \ d\Gamma+k_{\gamma^1}+k& t\in(0,T),\\
\end{array}
\right.
\end{equation}
\begin{equation}
\label{hey1}
\left\{
\begin{array}{rl}
v_n = 0  &  \text { on }(0,T)\times\partial\Omega,\\
\left[ 2\nu D( v) n+\beta_{\Omega}  v\right]_{\tau}=0&  \text { on }(0,T)\times\partial\Omega,\\
( v- v_\mathcal{S})_n=0&   \text { on }(0,T)\times\partial \mathcal{S},\\
\left[ 2\nu D( v) n + \beta_{\mathcal{S}} ( v- v_\mathcal{S})\right]_{\tau}=0 &  \text { on }(0,T)\times\partial \mathcal{S},
\end{array}
\right.
\end{equation}
where $ v_\mathcal{S}(y)=\ell_v+k_vy^\perp$, with
\begin{equation*}
v(T,\cdot) = 0,\; \text{ in } \mathcal{F},\quad
\ell_v(T)=0,\quad k_v(T)=0.
\end{equation*} 
Here $\gamma^2=(\ell,k)\in \mathbb{R}^3$ and  $\gamma^1\in L^2(0,T;\mathbb{H})$.
Following the arguments of \cite[Theorem 4.1]{MR2317341}, we show that the null controllability of the system \eqref{sfl1}, \eqref{sfl2}, \eqref{initialc} is equivalent to show the following observability inequality
\begin{equation}
\left|\gamma^2\right|^2+\left\|v(0)\right\|^2_\mathbb{H}+\int_0^T\left\| \rho_1 v\right\|^2_\mathbb{H} \ dt\leq C \left(\int_0^T\left\|\rho_2\gamma^1\right\|^2_\mathbb{H}\ dt+ \int_0^T\int_\mathcal{O}\left|\rho_3 v\right|^2\ dy dt   \right). 
\end{equation}
We set
\begin{equation}
\rho^*_i(t)=\left\{
\begin{array}{cc}
\rho_i(T-t) & t\in (0,T/2),\\
\rho_i(t)& t\in (T/2,T).
\end{array}
\right.
\end{equation}
Then, \eqref{carl} implies
\begin{equation}
\label{car1}
\int_0^T\left\| \rho_1^* v\right\|^2_{\mathbb{H}} \ dt\leq C \left(\int_0^T\left\|\rho^*_2(\gamma^1+\mathcal{C}^*\gamma^2)\right\|^2_{\mathbb{H}}\ dt+ \int_0^T\int_\mathcal{O}\left|\rho_3^* v\right|^2\ dy dt\right) .
\end{equation}
Next, we argue as \cite[Proposition 4]{MO}. Let consider a non negative function $\overline{\eta}\in C^1([0,T])$ such that 
$$
0\leq\overline{\eta}\leq1\text{ in } [0,T],\quad\overline{\eta}=1\text{ in } \left[ 0,T/2\right] ,\quad \overline{\eta}=0\text{ in } \left[ 3T/4,T\right].
$$
Then $(\overline{\eta}v,\overline{\eta}\ell_v,\overline{\eta}k_v)$ satisfies the energy estimates
\begin{multline*}
\left\| \overline{\eta}v\right\|^2_{L^2(0,T;[H^2(\mathcal{F})]^2)\cap C([0,T];[H^1(\mathcal{F})]^1)\cap H^1(0,T;[L^2(\mathcal{F})]^2)}+ \left\|\overline{\eta}\ell_v\right\|^2_{[H^1(0,T)]^2}+\left\|\overline{\eta}k_v\right\|^2_{H^1(0,T)} \\\leq C\left( \left\|\overline{\eta}'v\right\|^2_{L^2(0,T;[L^2(\mathcal{F})]^2)}+\left\|\overline{\eta}'l_v\right\|^2_{[L^2(0,T)]^2}+\left\|\overline{\eta}'k_v\right\|^2_{L^2(0,T)}  +\left\|\overline{\eta}(\gamma^1+\mathcal{C}^*\gamma^2)\right\|^2_{L^2(0,T;\mathbb{H})}\right). 
\end{multline*}
Whence
\begin{multline}
\label{eq17}
\left\| v\right\|^2_{L^2(0,T/2;[H^2(\mathcal{F})]^2)\cap C([0,T/2];[H^1(\mathcal{F})]^2)\cap H^1(0,T/2;[L^2(\mathcal{F})]^2)}+ \left\|\ell_v\right\|^2_{[H^1(0,T/2)]^2}+\left\|k_v\right\|^2_{H^1(0,T/2)} \\\leq C\left( \left\|v\right\|^2_{L^2(T/2,3T/4;[L^2(\mathcal{F})]^2)}+\left\|l_v\right\|^2_{[L^2(T/2,3T/4)]^2}+\left\|k_v\right\|^2_{L^2(T/2,3T/4)}  +\left\|(\gamma^1+\mathcal{C}^*\gamma^2)\right\|^2_{L^2(0,3T/4;\mathbb{H})}\right). 
\end{multline}
Using \eqref{car1}, we have
\begin{multline*}
 \left\|v\right\|^2_{L^2(T/2,3T/4;[L^2(\mathcal{F})]^2)}+\left\|\ell_v\right\|^2_{[L^2(T/2,3T/4)]^2}+\left\|k_v\right\|^2_{L^2(T/2,3T/4)}\leq C\int_0^T\left\| \rho_1^* v\right\|^2_{\mathbb{H}} \ dt\\\leq C \left(\int_0^T\left\|\rho^*_2(\gamma^1+\mathcal{C}^*\gamma^2)\right\|^2_{\mathbb{H}}\ dt+ \int_0^T\int_\mathcal{O}\left|\rho_3^* v\right|^2\ dy dt\right),
\end{multline*}
where we have used that the function $\frac{1}{\rho_1^*}$ is bounded in $[T/2,3T/4]$.

Since $\rho_1$ is constant in $(0,T/2)$ and $\frac{1}{\rho_2}$ is bounded in $(0,3T/4)$, the equation \eqref{eq17} gives
\begin{multline*}
\left\|v(0)\right\|^2_{\mathbb{H}}+\int_0^{T/2}\left\| \rho_1 v\right\|^2_{\mathbb{H}} \ dt\leq C \bigg(\int_0^{3T/4}\left\|\rho_2(\gamma^1+\mathcal{C}^*\gamma^2)\right\|^2_{\mathbb{H}}\ dt\\+\int_0^T\left\|\rho^*_2(\gamma^1+\mathcal{C}^*\gamma^2)\right\|^2_{\mathbb{H}}\ dt+  \int_0^T\int_\mathcal{O}\left|\rho_3^* v\right|^2\ dy dt\bigg) .
\end{multline*}
 Thus, using that $\rho^*_1=\rho_1$ in $(T/2,T)$, the estimate \eqref{car1} and using the fact that $\rho_i^*\leq \rho_i$ for $i=1,2,3$ in $[0,T]$, we get
\begin{equation*}
\left\|v(0)\right\|^2_{\mathbb{H}}+ \int_0^T\left\| \rho_1 v\right\|^2_{\mathbb{H}} \ dt\leq C \left(\int_0^T\left\|\rho_2(\gamma^1+\mathcal{C}^*\gamma^2)\right\|^2_{\mathbb{H}}\ dt+ \int_0^T\int_\mathcal{O}\left|\rho_3 v\right|^2\ dy dt\right) .
\end{equation*}
It remains to prove
\begin{equation}
\label{ing}
\left|\gamma^2\right|^2\leq C \left(\int_0^T\left\|\rho_2\gamma^1\right\|^2_\mathbb{H}\ dt+ \int_0^T\int_\mathcal{O}\left|\rho_3 v\right|^2\ dy dt   \right).    
\end{equation}
The result follows by using a contradiction argument. In fact, assume that \eqref{ing} is false. Then there exists a sequence $(\gamma^1_\kappa,\gamma^2_\kappa)_\kappa\in L^2(0,T;\mathbb{H})\times\mathbb{R}^3$ such that
\begin{equation}
\label{abs}
\left|\gamma^2_\kappa\right|=1 ,
\end{equation}
 and 
\begin{equation}
\label{limit}
\int_0^T\left\|\rho_2\gamma_\kappa^1\right\|^2_\mathbb{H} \ dydt+\int_0^T\int_\mathcal{O} \left|\rho_3v_\kappa \right|^2 \ dy dt  \longrightarrow 0.
\end{equation}
Thus, for $\varepsilon >0$
$$
\gamma_\kappa^1\rightharpoonup 0\text{ in } L^2(0,T-\varepsilon;\mathbb{H}),
$$
and
$$
\gamma^2_\kappa\longrightarrow \gamma^2 \text{ in } \mathbb{R}^3.
$$
On the other hand, the solution $(v_\kappa)_\kappa$ of the system \eqref{hey}, \eqref{hey1} associated with $(\gamma^1_\kappa,\gamma^2_\kappa)$ verifies the estimate (arguing always like \cite[Proposition 4]{MO} taking $\overline{\eta}=1$ in $[0,T-\varepsilon]$ and $\overline{\eta}=0$ in $[T-\frac{\varepsilon}{2},T]$)
\begin{multline*}
\left\| v_\kappa\right\|^2_{L^2(0,T-\varepsilon;[H^2(\mathcal{F})]^2)\cap H^1(0,T-\varepsilon;[L^2(\mathcal{F})]^2)}+ \left\| q_\kappa\right\|^2_{L^2(0,T-\varepsilon;H^1(\mathcal{F}))}+\left\|(\ell_v)_\kappa\right\|^2_{[H^1(0,T-\varepsilon)]^2}+\left\|(k_v)_\kappa\right\|^2_{ H^1(0,T-\varepsilon)} \\\leq C\left(\int_0^T\left\|\rho_2(\gamma_\kappa^1+\mathcal{C}^*\gamma_\kappa^2)\right\|^2_{\mathbb{H}}\ dt+ \int_0^T\int_\mathcal{O}\left|\rho_3 v_\kappa\right|^2\ dy dt\right).
\end{multline*}
From the above inequality, we have 
$$
v_\kappa\rightharpoonup v \text{ in } L^2(0,T-\varepsilon;\mathcal{D}(A))\cap H^1(0,T-\varepsilon;\mathbb{H}),
$$
and
$$
q_\kappa\rightharpoonup q \text{ in } L^2(0,T-\varepsilon;H^1(\mathcal{F})).
$$
Therefore, the couple $(v,q)$ satisfies the system 
\begin{equation*}
\left\{
\begin{array}{rl}
-\partial_t v-\nabla \cdot \mathbb{T}(v,q)=0 & \text { in }(0,T-\varepsilon)\times\mathcal{F},\\
\nabla \cdot v=0& \text { in }(0,T-\varepsilon)\times\mathcal{F},\\
m\ell_v'(t)=\int_{\partial \mathcal{S}}\mathbb{T}( v, q)n\ d\Gamma+\ell  & t\in(0,T-\varepsilon),\\
J(k_v'(t))=\int_{\partial \mathcal{S}}y^\perp\cdot\mathbb{T}( v, q)n \ d\Gamma+k& t\in(0,T-\varepsilon),\\
\end{array}
\right.
\end{equation*}
\begin{equation*}
\left\{
\begin{array}{rl}
v_n = 0  &  \text { on }(0,T-\varepsilon)\times\partial\Omega,\\
\left[ 2\nu D( v) n+\beta_{\Omega}  v\right]_{\tau}=0&  \text { on }(0,T-\varepsilon)\times\partial\Omega,\\
( v- v_\mathcal{S})_n=0&   \text { on }(0,T-\varepsilon)\times\partial \mathcal{S},\\
\left[ 2\nu D( v) n + \beta_{\mathcal{S}} ( v- v_\mathcal{S})\right]_{\tau}=0 &  \text { on }(0,T-\varepsilon)\times\partial \mathcal{S}.
\end{array}
\right.
\end{equation*}
Moreover, from \eqref{limit}, we have $v=0$ in $(0,T-\varepsilon)\times\mathcal{O}$. 
Then, using the unique continuity property of the Stokes system (see for instance \cite{MR1387461}), we get 
$$
v=\nabla q=0 \text{ in }(0,T-\varepsilon)\times\mathcal{F}.
$$ 
The boundary conditions read to 
$$
(\ell_v+k_vy^\perp)_n=0,\quad \beta_{\mathcal{S}}(\ell_v+k_vy^\perp)_\tau=0,\quad y\in\partial\mathcal{S}.
$$
Since $\beta_\mathcal{S}>0$, we get that $\ell_v=0$ and $k_v=0$ in $(0,T-\varepsilon)$.
Then, we obtain in particular that $\gamma^2=(\ell,k)=0$ from the equations of the structure motion which contradicts \eqref{abs}.

\end{proof}
\section{Fixed point}
\label{fip}
In this section, we prove Theorem \ref {tp} by applying a fixed-point argument. For this purpose, we follow the same steps as \cite{MR2317341}. First, we give some estimates on the terms appearing in the system \eqref{sf1}, \eqref{sf2} and \eqref{sf3}.
We have the following lemma that is proved in \cite{MR2029294}.
\begin{Lemma}
\label{lem1}
Let $X$ and $Y$ satisfying the properties given in Section \ref{chg}. We obtain for all $(\overline{u},\pi)\in [H^2(\mathcal{F})]^2\times H^1(\mathcal{F})$, the following estimates, for all $t\in [0,T]$
\begin{equation*}
\frac{1}{\rho_4(t)^2}\left\|(\mathcal{L}-\Delta)\overline{u} \right\|_{[L^2(\mathcal{F})]^2}\leq C \left( \left\|\frac{\overline{h}'}{\rho_4}\right\|_{[L^\infty(0,T)]^2}+\left\|\frac{\overline{\theta}'}{\rho_4}\right\|_{[L^2(0,T)]^2} \right) \left\|\frac{\overline{u}}{\rho_4(t)}\right\|_{[H^2(\mathcal{F})]^2} ,
\end{equation*}
\begin{equation*}
\frac{1}{\rho_4(t)^2}\left\|\mathcal{N}\overline{u} \right\|_{[L^2(\mathcal{F})]^2}\leq C \left(1+ \left\|\frac{\overline{h}'}{\rho_4}\right\|_{[L^2(0,T)]^2}+\left\|\frac{\overline{\theta}'}{\rho_4}\right\|_{[L^2(0,T)]^2} \right) \left\|\frac{\overline{u}}{\rho_4(t)}\right\|_{[H^1(\mathcal{F})]^2}\left\|\frac{\overline{u}}{\rho_4(t)}\right\|_{[L^2(\mathcal{F})]^2} ,
\end{equation*}
\begin{equation*}
\frac{1}{\rho_4(t)^2}\left\|\mathcal{M}\overline{u} \right\|_{[L^2(\mathcal{F})]^2}\leq C\left( \left\|\frac{\overline{h}'}{\rho_4}\right\|_{[L^2(0,T)]^2}+\left\|\frac{\overline{\theta}'}{\rho_4}\right\|_{[L^2(0,T)]^2} \right) \left\|\frac{\overline{u}}{\rho_4(t)}\right\|_{[H^1(\mathcal{F})]^2} ,
\end{equation*}
\begin{equation*}
\frac{1}{\rho_4(t)^2}\left\|(\mathcal{G}-\nabla)\pi \right\|_{[L^2(\mathcal{F})]^2}\leq C \left( \left\|\frac{\overline{h}'}{\rho_4}\right\|_{[L^\infty(0,T)]^2}+\left\|\frac{\overline{\theta}'}{\rho_4}\right\|_{[L^2(0,T)]^2} \right) \left\|\frac{\pi}{\rho_4(t)}\right\|_{[H^1(\mathcal{F})]^2}.
\end{equation*}
\end{Lemma}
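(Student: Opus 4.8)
The plan is to establish each of the four estimates through the same two-step scheme. Step one produces, for every operator, a \emph{pointwise-in-$t$} bound of the form ``small coefficient times a norm of $\overline u$ (or $\pi$)'', where the small coefficient is measured by the rigid displacement $\overline h(t),\overline\theta(t)$ or by the rigid velocities $\overline h'(t),\overline\theta'(t)$. Step two then converts these pointwise factors into the time-global norms $\|\overline h'/\rho_4\|$, $\|\overline\theta'/\rho_4\|$ on the right-hand side, using the explicit shape of $\rho_4$ together with the fact that, in the fixed-point setting, $\overline h(T)=0$ and $\overline\theta(T)=0$.

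For step one I would return to the explicit expressions for $\mathcal L,\mathcal M,\mathcal N,\mathcal G$ written just after \eqref{sf3} in terms of the tensors $g^{ij},g_{ij}$, the Christoffel symbols $\Gamma^k_{ij}$, and the quantities $\partial_tY,\partial^2_{t,y}X$. The observation that drives everything is that these operators degenerate to their Euclidean counterparts when $X(t,\cdot)=\Id$: then $g^{ij}=g_{ij}=\delta_{ij}$, $\Gamma^k_{ij}=0$, $\partial_tY=0$, $\partial^2_{t,y}X=0$, so $\mathcal L=\Delta$, $\mathcal G=\nabla$, $\mathcal M=0$ and $\mathcal N\overline u=(\overline u\cdot\nabla)\overline u$. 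Using the properties of $X,Y$ recalled in Section \ref{chg} — in particular $\nabla X=R_{\overline\theta}$ near $\mathcal S$ (so $g^{ij}=g_{ij}=\delta_{ij}$ and $\Gamma^k_{ij}=0$ there), $X=\Id$ near $\partial\Omega\cup\mathcal O$, and the bound \eqref{im} — one gets, uniformly in $t\in[0,T]$,
\[
\|g^{ij}(t,\cdot)-\delta_{ij}\|_{C^1(\overline{\mathcal F})}+\|\Gamma^k_{ij}(t,\cdot)\|_{C^1(\overline{\mathcal F})}\le C\big(|\overline h(t)|+|R_{\overline\theta(t)}-I_2|\big),
\]
\[
\|\partial_tY(t,\cdot)\|_{C^1(\overline{\mathcal F})}+\|\partial^2_{t,y}X(t,\cdot)\|_{C^1(\overline{\mathcal F})}\le C\big(|\overline h'(t)|+|\overline\theta'(t)|\big).
\]
Substituting these into the formulas and applying the Cauchy--Schwarz inequality yields the pointwise bounds $\|(\mathcal L-\Delta)\overline u(t)\|_{L^2}\le C(|\overline h(t)|+|\overline\theta(t)|)\|\overline u(t)\|_{H^2}$, $\|(\mathcal G-\nabla)\pi(t)\|_{L^2}\le C(|\overline h(t)|+|\overline\theta(t)|)\|\pi(t)\|_{H^1}$, $\|\mathcal M\overline u(t)\|_{L^2}\le C(|\overline h'(t)|+|\overline\theta'(t)|)\|\overline u(t)\|_{H^1}$. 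For $\mathcal N$ I would split $\mathcal N\overline u=(\overline u\cdot\nabla)\overline u+\sum_{j,k}\Gamma^i_{jk}\overline u_j\overline u_k$, bound the Christoffel part by $C(|\overline h(t)|+|\overline\theta(t)|)\|\overline u(t)\|_{L^2}^2$, and control the genuine transport term $(\overline u\cdot\nabla)\overline u$ through the two-dimensional Sobolev and Gagliardo--Nirenberg inequalities ($H^1(\mathcal F)\hookrightarrow L^4(\mathcal F)$), which produces a bound by a product of lower-order norms of $\overline u$; this is the source of the ``$1+\dots$'' in the $\mathcal N$-estimate.

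For step two I would exploit the explicit form of the weight: on $(T/2,T)$ one has $\rho_4(t)=e^{-\frac{11}{8}s\widehat\beta(t)}$ and on $(0,T/2)$ it equals the constant $\rho_4(T/2)$, while $\widehat\beta$ attains its minimum at $t=T/2$; hence $\rho_4$ is nonincreasing on $(T/2,T)$ and constant before, so $\rho_4(s)\le\rho_4(t)$ whenever $s\ge t$, and $\rho_4$ is bounded. Since $\overline h(T)=0$,
\[
|\overline h(t)|=\Big|\int_t^T\overline h'(s)\,ds\Big|\le\int_t^T\frac{|\overline h'(s)|}{\rho_4(s)}\,\rho_4(s)\,ds\le\rho_4(t)\int_t^T\frac{|\overline h'(s)|}{\rho_4(s)}\,ds\le C\,\rho_4(t)\,\Big\|\frac{\overline h'}{\rho_4}\Big\|_{L^\infty(0,T)},
\]
and, by Cauchy--Schwarz together with $\rho_4(s)\le\rho_4(t)$, also $|\overline h(t)|\le C\rho_4(t)\|\overline h'/\rho_4\|_{L^2(0,T)}$; the same holds with $\overline\theta$ in place of $\overline h$ and $|R_{\overline\theta(t)}-I_2|\le C|\overline\theta(t)|$, while the pointwise velocity factors $|\overline h'(t)|,|\overline\theta'(t)|$ entering the $\mathcal M$-estimate are absorbed against the corresponding $\rho_4$-weighted norms of $\overline h',\overline\theta'$ that control the state in the fixed-point space. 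Dividing the pointwise estimates of step one by $\rho_4(t)^2$ and distributing one factor $\rho_4(t)^{-1}$ onto $\overline u$ (or $\pi$) and the other onto $\overline h,\overline\theta$ via the inequalities above gives the four claimed bounds.

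I expect the main obstacle to be precisely this weight bookkeeping rather than the tensor algebra or the Sobolev estimates: one has to verify that the exponents of $s,\lambda,\xi^*$ built into $\rho_4$ and the vanishing conditions $\overline h(T)=\overline\theta(T)=0$ are exactly what makes the pointwise displacement and velocity factors absorbable into time-global $\rho_4$-weighted norms, with constants depending only on $\Omega,\mathcal O$ (and $T$); the $\mathcal M$-term, whose coefficients carry the pointwise velocities $\overline h'(t),\overline\theta'(t)$, is the most sensitive point in this respect. The remaining ingredients — the degeneration of $\mathcal L,\mathcal M,\mathcal N,\mathcal G$ at $X=\Id$ and the two-dimensional embeddings — are standard, and the detailed computation is the one performed in \cite{MR2029294}.
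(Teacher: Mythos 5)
Your overall route is the one the paper intends: the paper gives no proof of Lemma \ref{lem1} and simply refers to \cite{MR2029294} for these coefficient computations, and your two-step scheme (expand $\mathcal L-\Delta$, $\mathcal M$, $\mathcal N$, $\mathcal G-\nabla$ in terms of $g^{ij}-\delta_{ij}$, $\Gamma^k_{ij}$, $\partial_t Y$, $\partial_t\nabla X$; bound these coefficients pointwise in $t$ by the instantaneous displacement or velocity; then convert displacements into $\rho_4$-weighted velocity norms using $\overline h(T)=\overline\theta(T)=0$ and the fact that $\rho_4$ is constant on $(0,T/2)$ and nonincreasing on $(T/2,T)$) is exactly the standard argument, with the weight bookkeeping as in \cite{MR2317341}. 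Your step two is correct as written. One caveat on step one: the pointwise bound of $g^{ij}-\delta_{ij}$ and $\Gamma^k_{ij}$ by $|\overline h(t)|+|R_{\overline\theta(t)}-I_2|$ does not follow from the properties listed in Section \ref{chg} (in particular not from \eqref{im}); it uses that $X(t,\cdot)$ is built from the instantaneous position and reduces to $\Id$ when $(\overline h(t),\overline\theta(t))=(0,0)$, which has to be taken from the construction itself.

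Two of your estimate-level claims are genuine gaps. For $\mathcal N$, the embedding $H^1\hookrightarrow L^4$ gives $\left\|(\overline u\cdot\nabla)\overline u\right\|_{L^2}\le\left\|\overline u\right\|_{L^4}\left\|\nabla\overline u\right\|_{L^4}\le C\left\|\overline u\right\|_{H^1}\left\|\overline u\right\|_{H^2}$, and no interpolation removes the $H^2$ norm (Ladyzhenskaya gives at best $C\left\|\overline u\right\|_{L^2}^{1/2}\left\|\overline u\right\|_{H^1}\left\|\overline u\right\|_{H^2}^{1/2}$); so your claim that the transport term is controlled by ``lower-order norms'' reproducing the stated product $\left\|\overline u/\rho_4\right\|_{H^1}\left\|\overline u/\rho_4\right\|_{L^2}$ does not go through, and the corrected bound involving $\left\|\overline u/\rho_4\right\|_{H^2}$ is in fact what the fixed point in Section \ref{fip} can absorb, since $\overline u/\rho_4\in L^2(0,T;[H^2(\mathcal F)]^2)\cap L^\infty(0,T;[H^1(\mathcal F)]^2)$. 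Similarly the Christoffel part is bounded by $\left\|\Gamma\right\|_{L^\infty}\left\|\overline u\right\|_{L^4}^2\le C(|\overline h(t)|+|\overline\theta(t)|)\left\|\overline u\right\|_{L^2}\left\|\overline u\right\|_{H^1}$, not by $\left\|\overline u\right\|_{L^2}^2$ as you wrote. For $\mathcal M$, your pointwise bound produces the factor $(|\overline h'(t)|+|\overline\theta'(t)|)/\rho_4(t)$, which is controlled by the $L^\infty(0,T)$ norms of $\overline h'/\rho_4$ and $\overline\theta'/\rho_4$ but not by their $L^2(0,T)$ norms appearing in the displayed estimate; saying these factors are ``absorbed against the weighted norms that control the state'' is not an argument — you should either state the bound with $L^\infty$ norms (available since $\overline h'/\rho_4\in H^1(0,T)$ by Proposition \ref{nulc}) or integrate the estimate in time before invoking $L^2$ norms.
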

We have also 
\begin{Lemma}
\label{lem2}
Let $X$ and $Y$ satisfying the properties given in Section \ref{chg}. We obtain for all $(\overline{u},\pi)\in [H^2(\mathcal{F})]^2\times H^1(\mathcal{F})$ the following estimates, for all $t\in [0,T]$
\begin{equation*}
\frac{1}{\rho_4(t)^2}\left\|(\mathcal{L}^{(1)}-\mathcal{L}^{(2)})\overline{u} \right\|_{[L^2(\mathcal{F})]^2}\leq C \left( \left\|\frac{\overline{h}'^{(1)}-\overline{h}'^{(2)}}{\rho_4}\right\|_{[L^\infty(0,T)]^2}+\left\|\frac{\overline{\theta}'^{(1)}-\overline{\theta}'^{(2)}}{\rho_4}\right\|_{[L^2(0,T)]^2} \right) \left\|\frac{\overline{u}}{\rho_4}\right\|_{[H^2(\mathcal{F})]^2} ,
\end{equation*}
\begin{multline*}
\frac{1}{\rho_4(t)^2}\left\|(\mathcal{N}^{(1)}-\mathcal{N}^{(2)})\overline{u} \right\|_{[L^2(\mathcal{F})]^2}\\\leq C \left( \left\|\frac{\overline{h}'^{(1)}-\overline{h}'^{(2)}}{\rho_4}\right\|_{[L^2(0,T)]^2}+\left\|\frac{\overline{\theta}'^{(1)}-\overline{\theta}'^{(2)}}{\rho_4}\right\|_{[L^2(0,T)]^2} \right) \left\|\frac{\overline{u}}{\rho_4(t)}\right\|_{[H^1(\mathcal{F})]^2}\left\|\frac{\overline{u}}{\rho_4(t)}\right\|_{[L^2(\mathcal{F})]^2} ,
\end{multline*}
\begin{equation*}
\frac{1}{\rho_4(t)^2}\left\|(\mathcal{M}^{(1)}-\mathcal{M}^{(2)})\overline{u} \right\|_{[L^2(\mathcal{F})]^2}\leq C \left( \left\|\frac{\overline{h}'^{(1)}-\overline{h}'^{(2)}}{\rho_4}\right\|_{[L^2(0,T)]^2}+\left\|\frac{\overline{\theta}'^{(1)}-\overline{\theta}'^{(2)}}{\rho_4}\right\|_{[L^2(0,T)]^2} \right) \left\|\frac{\overline{u}}{\rho_4(t)}\right\|_{[H^1(\mathcal{F})]^2},
\end{equation*}
\begin{equation*}
\frac{1}{\rho_4(t)^2}\left\|(\mathcal{G}^{(1)}-\mathcal{G}^{(2)})\pi \right\|_{[L^2(\mathcal{F})]^2}\leq C \left( \left\|\frac{\overline{h}'^{(1)}-\overline{h}'^{(2)}}{\rho_4}\right\|_{[L^\infty(0,T)]^2}+\left\|\frac{\overline{\theta}'^{(1)}-\overline{\theta}'^{(2)}}{\rho_4}\right\|_{[L^2(0,T)]^2} \right) \left\|\frac{\pi}{\rho_4(t)}\right\|_{[H^1(\mathcal{F})]^2}.
\end{equation*}
\end{Lemma}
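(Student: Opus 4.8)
The plan is to follow exactly the scheme of the proof of Lemma~\ref{lem1}, the only genuinely new point being the way the operators depend on the two trajectories. Recall that $\mathcal{L}^{(i)},\mathcal{M}^{(i)},\mathcal{N}^{(i)},\mathcal{G}^{(i)}$ are built from the geometric quantities $g^{jk},g_{jk},\Gamma^k_{ij}$ (together with their first $y$-derivatives and, for $\mathcal{M}$, their time derivatives) attached to the change of variables $(X^{(i)},Y^{(i)})$ associated with $(\overline{h}^{(i)},\overline{\theta}^{(i)})$. Each of $\mathcal{L},\mathcal{M},\mathcal{N},\mathcal{G}$ is of the form ``(coefficients depending on this geometry)$\,\cdot\,$(derivatives of $\overline{u}$ or of $\pi$)'', plus, in the case of $\mathcal{N}$, the metric-independent transport term $\sum_j \overline{u}_j\partial_{y_j}\overline{u}_i$. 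Consequently, for $\mathcal{A}\in\{\mathcal{L},\mathcal{M},\mathcal{G}\}$ the difference $(\mathcal{A}^{(1)}-\mathcal{A}^{(2)})$ applied to $\overline{u}$ (resp. $\pi$) is again of that form but with the \emph{differences} of the geometric coefficients, while $(\mathcal{N}^{(1)}-\mathcal{N}^{(2)})\overline{u}=\sum_{j,k}(\Gamma^{i,(1)}_{jk}-\Gamma^{i,(2)}_{jk})\overline{u}_j\overline{u}_k$, the Euclidean transport term having cancelled. So everything reduces to controlling, in $L^\infty(\mathcal{F})$ at each time $t$, the differences of the geometric coefficients.

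To do this I would first establish a Lipschitz version of \eqref{im}. By the construction of Section~\ref{chg} (following \cite{MR2029294}), the pair $(X^{(i)}(t,\cdot),\partial_tX^{(i)}(t,\cdot))$, and hence $(Y^{(i)}(t,\cdot),\partial_tY^{(i)}(t,\cdot))$ and all the coefficients above, are fixed smooth functions of $(\overline{h}^{(i)}(t),\overline{\theta}^{(i)}(t),\overline{h}'^{(i)}(t),\overline{\theta}'^{(i)}(t))$. Since the smallness of the data keeps all these maps in a fixed bounded set of $C^2(\overline{\Omega})$ (cf. \eqref{smallness}), a mean value argument gives, for every $t\in[0,T]$, that the $L^\infty(\mathcal{F})$-norm of every coefficient difference entering $\mathcal{L}^{(1)}-\mathcal{L}^{(2)},\,\mathcal{M}^{(1)}-\mathcal{M}^{(2)},\,\mathcal{N}^{(1)}-\mathcal{N}^{(2)},\,\mathcal{G}^{(1)}-\mathcal{G}^{(2)}$ is bounded by $C(\delta_0(t)+\delta_1(t))$, where $\delta_0(t)=|\overline{h}^{(1)}(t)-\overline{h}^{(2)}(t)|+|\overline{\theta}^{(1)}(t)-\overline{\theta}^{(2)}(t)|$ and $\delta_1(t)=|\overline{h}'^{(1)}(t)-\overline{h}'^{(2)}(t)|+|\overline{\theta}'^{(1)}(t)-\overline{\theta}'^{(2)}(t)|$.

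It then remains to convert $\delta_0(t),\delta_1(t)$ into the weighted norms on the right-hand sides. For $\delta_1$ one simply writes $\delta_1(t)\le \rho_4(t)(\|(\overline{h}'^{(1)}-\overline{h}'^{(2)})/\rho_4\|_{L^\infty(0,T)}+\|(\overline{\theta}'^{(1)}-\overline{\theta}'^{(2)})/\rho_4\|_{L^\infty(0,T)})$, passing to the $L^2$ norm when it is the one appearing. For $\delta_0$ I use that both trajectories share the same (fixed) initial configuration and the same terminal configuration $\overline{h}(T)=0,\ \overline{\theta}(T)=0$, so $\overline{h}^{(1)}(t)-\overline{h}^{(2)}(t)=-\int_t^T(\overline{h}'^{(1)}-\overline{h}'^{(2)})$ and likewise for $\overline{\theta}$; combined with the elementary weight bounds $\int_t^T\rho_4\le C\rho_4(t)$ and $(\int_t^T\rho_4^2)^{1/2}\le C\rho_4(t)$ (valid since $\rho_4$ is bounded and nonincreasing near $t=T$), this gives $\delta_0(t)\le C\rho_4(t)(\|(\overline{h}'^{(1)}-\overline{h}'^{(2)})/\rho_4\|_{L^\infty(0,T)}+\|(\overline{\theta}'^{(1)}-\overline{\theta}'^{(2)})/\rho_4\|_{L^2(0,T)})$. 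Feeding these into the coefficient-difference estimates and performing exactly the same manipulation of the factors $\tfrac{1}{\rho_4(t)^2}$ on the left and $\|\overline{u}/\rho_4\|$ on the right as in the proof of Lemma~\ref{lem1} yields the four inequalities; for the $\mathcal{N}$ term one additionally invokes the two-dimensional embedding $\|v\|_{L^4(\mathcal{F})}^2\le C\|v\|_{H^1(\mathcal{F})}\|v\|_{L^2(\mathcal{F})}$ to handle the quadratic factor $\overline{u}_j\overline{u}_k$, which is the source of the bilinear dependence on $\overline{u}$.

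I expect the main obstacle to be this last weight bookkeeping rather than the differentiation step: one must arrange that the position-difference contributions carry a full factor $\rho_4(t)$, which hinges on the two trajectories having the same initial and terminal data, and one must keep the $\mathcal{N}$ difference at the same order as the linear terms, which works precisely because only the Christoffel-symbol difference survives there. Everything else is routine and parallels \cite{MR2029294}.
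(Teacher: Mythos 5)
The paper itself gives no proof of Lemma \ref{lem2} (it is stated right after Lemma \ref{lem1}, which is attributed to \cite{MR2029294}), so there is nothing to compare line by line; your reconstruction is the natural argument and, in its main structure, the right one: only coefficient differences survive (in particular the Euclidean transport term cancels in $\mathcal{N}^{(1)}-\mathcal{N}^{(2)}$, leaving only the Christoffel difference), the coefficients are Lipschitz in the instantaneous data on the bounded set guaranteed by the smallness assumptions, the position differences are converted into velocity differences through the common terminal data $\overline{h}^{(i)}(T)=0$, $\overline{\theta}^{(i)}(T)=0$ and the monotonicity of $\rho_4$ (so that $\int_t^T\rho_4\leq C\rho_4(t)$ and $(\int_t^T\rho_4^2)^{1/2}\leq C\rho_4(t)$), and the two-dimensional interpolation $\|v\|_{L^4}^2\leq C\|v\|_{H^1}\|v\|_{L^2}$ handles the quadratic term in $\mathcal{N}$. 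One caveat you implicitly assume and should state: the coefficients must depend on the trajectory only through the instantaneous values $(\overline{h}(t),\overline{\theta}(t),\overline{h}'(t),\overline{\theta}'(t))$; this is true for the cut-off-and-volume-correction construction that the properties listed in Section \ref{chg} describe, but not for a genuinely flow-based (history-dependent) construction, for which your mean-value step would instead produce integrals from $0$ to $t$ that do not pair with the decaying weight $\rho_4$.

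The one genuine soft spot is your treatment of $\delta_1(t)=|\overline{h}'^{(1)}(t)-\overline{h}'^{(2)}(t)|+|\overline{\theta}'^{(1)}(t)-\overline{\theta}'^{(2)}(t)|$: writing $\delta_1(t)\leq\rho_4(t)\|\delta_1/\rho_4\|_{L^\infty(0,T)}$ and then ``passing to the $L^2$ norm when it is the one appearing'' is not legitimate, since a pointwise-in-time quantity cannot be dominated by an $L^2(0,T)$ norm. This only matters for the $\mathcal{M}$ inequality, whose coefficients contain $\partial_tY$ and $\nabla Y\,\partial_t\nabla X$ and hence the velocity difference at time $t$ itself; for $\mathcal{L}$, $\mathcal{N}$ and $\mathcal{G}$ only $\delta_0(t)$ enters and your $\int_t^T$ argument genuinely yields the stated $L^2$ (or $L^\infty$) norms. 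What your argument actually proves for $\mathcal{M}^{(1)}-\mathcal{M}^{(2)}$ is the same estimate with $L^\infty(0,T)$ norms of the weighted velocity differences (plus a harmless $\delta_0$-contribution), which is what is used in the fixed point anyway, since there $\overline{h}'/\rho_4$ and $\overline{\theta}'/\rho_4$ are controlled in $H^1(0,T)\hookrightarrow L^\infty(0,T)$; note that the same imprecision is already present in the statement of Lemma \ref{lem1}, so this is a defect of the literal statement rather than of your strategy, but you should either prove the $L^\infty$ version and say so, or explain how the $L^2$ norm could be recovered, rather than glossing over the switch.
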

Now, we are in position to prove Theorem \ref{tp}.  
\begin{proof}[Proof of Theorem \ref {tp}]
For all $r>0$, let us set
$$
\mathcal{K}_r=\left\lbrace F\in \mathcal{H}\;|\; \left\|\frac{F}{\rho_1}\right\|_{L^2(0,T;\mathbb{H})}\leq r \right\rbrace. 
$$
Let $F\in \mathcal{K}_r$, and assume that
\begin{equation}
\label{assum}
\left\|u^0 \right\|_{[H^1(\mathcal{F})]^2} +\left|\widetilde{\ell^0}\right|+\left|h^0\right|+\left|\omega^0 \right|+\left|\theta^0 \right|\leq r.
\end{equation}
From Proposition \ref{nulc}, the solution $(\overline{u},\pi,\overline{h},\overline{\theta})$ of the linear system \eqref{sfl1}, \eqref{sfl2}, \eqref{initialc} with $v^*=E_T(Z^0,a^0,F)$ satisfies $\overline{h}(T)=0$, $\overline{\theta}(T)=0$ and 
$$
\frac{\overline{u}}{\rho_4}\in L^2(0,T;[H^2(\mathcal{F})]^2)\cap H^1(0,T;[L^2(\mathcal{F})]^2),\quad \frac{\nabla \pi}{\rho_4} \in L^2(0,T;L^2(\mathcal{F})),
$$ 
$$
\frac{\overline{h}'}{\rho_4}\in [H^1(0,T)]^2,\quad\frac{\overline{\theta}'}{\rho_4} \in H^1(0,T).
$$
Using \eqref{assum} and \eqref{estim}, we get
\begin{equation}
\label{assum2}
\left\|\frac{\overline{u}}{\rho_4}\right\|_{L^2(0,T;\mathcal{D}(A))\cap H^1(0,T;\mathbb{H})\cap C(0,T;(\mathcal{D}(-A))^{1/2})}+\left\|\frac{\nabla \pi}{\rho_4}\right\|_{[L^2(\mathcal{F})]^2} \leq Cr. 
\end{equation}

Using the condition \eqref{assum2}, we can construct the change of variables defined in Section \ref{chg}. 

We can thus, define the mapping $\Phi: \mathcal{K}_r\longrightarrow \mathcal{K}_r$, that associates $F\in \mathcal{K}_r$, we set
$$
\Phi(F)=
\left\{
\begin{array}{cc}
 \nu(\mathcal{L}-\Delta)\overline{u}-\mathcal{M}\overline{u}-\mathcal{N}\overline{u}+(\nabla-\mathcal{G})\pi & \text{ in }\mathcal{F},\\
 -mk_{\overline{u}}\ell_{\overline{u}}^\perp & \text{ in }\mathcal{S},
 \end{array}
 \right.
$$
where $(\overline{u},\pi,\overline{h},\overline{\theta})$ is the solution of the linear system \eqref{sfl1}, \eqref{sfl2} and \eqref{sfl3}.
Combining Lemma \ref{lem1}, the estimate \eqref{assum2} and 
$$
\frac{(\rho_4)^2}{\rho_1} \in L^\infty(0,T),
$$
we obtain
$$
\left\|\frac{\Phi(F)}{\rho_1} \right\|_{L^2(0,T;[L^2(\mathcal{F})]^2)} \leq C (1+r)^{d+1}r^2.
$$

Then, for $r$ small enough, we get $\Phi(\mathcal{K}_r)\subset \mathcal{K}_r$.
Similarly, using Lemma \ref{lem2}, we get that 
$$
\left\|\frac{\Phi(F^1)-\Phi(F^2)}{\rho_1} \right\|_{L^2(0,T;[L^2(\mathcal{F})]^2)} \leq C (1+r)^{d+1}r\left\|\frac{F^1-F^2}{\rho_1}\right\|_{L^2(0,T;[L^2(\mathcal{F})]^2)} . 
$$
Thus for $r$ small enough, we obtain that $\Phi|_{\mathcal{K}_r}$ is a contraction. Then $\Phi$ admits a fixed point associated to $(u,p,h,\theta)$, the unique solution of \eqref{sf1}, \eqref{sf2}, \eqref{sf3}, \eqref{sf4}, which ends the proof of Theorem \ref {tp}.
\end{proof}
\appendix
\section{Carleman estimate for the heat operator}
Let define the function $\eta$ like \eqref{eta}.
Let take $\beta$ and $\xi$ as \eqref{beta}, \eqref{ksi}.  Let $\psi$ be a function defined on $\mathcal{F}$ that can be either a vector valued function or a scalar function. 
\begin{Proposition}
\label{pheat}
Suppose that the function $\psi$ verifies the heat equation
\begin{equation}
\label{heat}
-\partial_t\psi-\nu\Delta\psi=f,\quad \text{ in } (0,T)\times\mathcal{F}.
\end{equation}
Then, there exist $C(\mathcal{F},\mathcal{O}_\eta)>0$, $s_1$ and $\lambda_1$ where $\lambda_1= \lambda_1(\mathcal{F},\mathcal{O}_\eta)\geq 1$ and $s_1=\lambda_1(T^N+T^{2N})$, 
such that the following estimate holds
\begin{multline}
\label{car}
s\lambda^2\int_0^T\int_{\mathcal{F}}e^{-2s\beta}\xi \left|\nabla \psi\right|^2  \ dy dt+s^3\lambda^4\int_0^T\int_{\mathcal{F}}e^{-2s\beta}\xi^3\left|\psi\right|^2 \ dy dt +s^3\lambda^3\int_0^T\int_{\partial\mathcal{F}}e^{-2s\widehat{\beta}}(\xi^*)^3\left|\psi\right|^2d\Gamma dt \\\leq C\bigg( s^3\lambda^4\int_0^T\int_{\mathcal{O}_\eta} e^{-2s\beta}\xi^3\left|\psi \right|^2 \ dy dt +s\lambda^2\int_0^T\int_{\mathcal{O}_\eta}e^{-2s\beta} \xi \left|\nabla\psi \right|^2 \ dy dt+\int_0^T\int_{\mathcal{F}} e^{-2s\beta}\left| f\right|^2 \ dy dt \\+s\lambda\int_0^T\int_{\partial\mathcal{F}}e^{-2s\widehat{\beta}}\xi^*\left| \nabla \psi\tau\right| ^2 d\Gamma+s^{-1}\lambda^{-1}\int_0^T\int_{\partial\mathcal{F}}e^{-2s\widehat{\beta}}(\xi^*)^{-1}\left|\partial_t\psi\right|^2 \ d\Gamma dt\bigg),
\end{multline}
for any $s\geq s_1$ and $\lambda\geq \lambda_1$, where $f\in [L^2(\mathcal{F})]^2$.	
\end{Proposition}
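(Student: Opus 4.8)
The plan is to prove this as a \emph{boundary} Carleman estimate for the backward heat operator, using the classical conjugation-and-splitting method of Fursikov--Imanuvilov, but being careful to keep every surface integral over $(0,T)\times\partial\mathcal{F}$ since no boundary condition is imposed on $\psi$. First I set $w=e^{-s\beta}\psi$, so $\psi=e^{s\beta}w$, and compute the conjugated equation. Using that $\beta$ and $\xi$ are related through \eqref{beta}, \eqref{ksi} by $\nabla\beta=-\lambda\xi\,\nabla\eta$ and $\Delta\beta=-\lambda\xi\,\Delta\eta-\lambda^{2}\xi|\nabla\eta|^{2}$, one finds that $w$ solves
\[
-w_t-\nu\Delta w-\nu s^{2}\lambda^{2}\xi^{2}|\nabla\eta|^{2}w-2\nu s\lambda\xi\,\nabla\eta\cdot\nabla w+(\text{terms of strictly lower order in }s)\,w=e^{-s\beta}f .
\]
I then split the left-hand side into a formally self-adjoint part $M_1w$ (containing $-\nu\Delta w$, the zeroth-order term $-\nu s^{2}\lambda^{2}\xi^{2}|\nabla\eta|^{2}w$ and $-s\beta_t w$) and a skew-adjoint part $M_2w$ (containing $w_t$, $2\nu s\lambda\xi\,\nabla\eta\cdot\nabla w$ and the companion $2\nu s\lambda^{2}\xi|\nabla\eta|^{2}w$), the remaining lower-order terms being moved to the right as a remainder $Rw$. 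Squaring the identity $M_1w+M_2w=e^{-s\beta}f+Rw$ in $L^{2}((0,T)\times\mathcal{F})$ gives $\|M_1w\|^{2}+\|M_2w\|^{2}+2(M_1w,M_2w)\le C(\|e^{-s\beta}f\|^{2}+\|Rw\|^{2})$, and the heart of the matter is to extract from $(M_1w,M_2w)$, after integration by parts in $t$ and in $y$, the two dominant positive bulk quantities $s\lambda^{2}\int\!\!\int e^{-2s\beta}\xi|\nabla\eta|^{2}|\nabla w|^{2}$ and $s^{3}\lambda^{4}\int\!\!\int e^{-2s\beta}\xi^{3}|\nabla\eta|^{4}|w|^{2}$ (the $t=0,T$ contributions vanish since $\beta\to+\infty$ there), together with a family of surface integrals on $(0,T)\times\partial\mathcal{F}$.

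The crucial step is the treatment of these surface integrals. On $\partial\mathcal{F}$ one has $\eta=0$, hence $\beta=\widehat{\beta}$, $\xi=\xi^{*}$, and, by \eqref{eta}, $\nabla\eta\cdot n<0$. The integrations by parts produce, schematically, boundary terms of the types $s\lambda\int\!\!\int_{\partial\mathcal{F}}(\nabla\eta\cdot n)e^{-2s\widehat{\beta}}\xi^{*}|\partial_n w|^{2}$, $s\lambda\int\!\!\int_{\partial\mathcal{F}}(\nabla\eta\cdot n)e^{-2s\widehat{\beta}}\xi^{*}|\nabla_\tau w|^{2}$, $-s^{3}\lambda^{3}\int\!\!\int_{\partial\mathcal{F}}(\nabla\eta\cdot n)^{3}e^{-2s\widehat{\beta}}(\xi^{*})^{3}|w|^{2}$, and expressions of the form $\int\!\!\int_{\partial\mathcal{F}}e^{-2s\widehat{\beta}}(\partial_n w)\,w_t$. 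Splitting $\nabla w|_{\partial\mathcal{F}}$ into its normal and tangential parts, the competition between the first two types leaves, after cancellation, a genuinely positive term $\sim s\lambda\int\!\!\int_{\partial\mathcal{F}}e^{-2s\widehat{\beta}}\xi^{*}|\nabla\eta\cdot n|\,|\partial_n w|^{2}$ on the left; the $(\nabla\eta\cdot n)^{3}$ term likewise has the right sign and, after translating back via $w=e^{-s\widehat{\beta}}\psi$ on the boundary, becomes the left-hand side term $s^{3}\lambda^{3}\int\!\!\int_{\partial\mathcal{F}}e^{-2s\widehat{\beta}}(\xi^{*})^{3}|\psi|^{2}$. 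The indefinite tangential-gradient part is sent to the right and bounded, again after passing from $w$ to $\psi$, by $s\lambda\int\!\!\int_{\partial\mathcal{F}}e^{-2s\widehat{\beta}}\xi^{*}|\nabla\psi\,\tau|^{2}$; the $(\partial_n w)\,w_t$ terms are absorbed by Young's inequality, the $|\partial_n w|^{2}$ factor going onto the good left-hand side boundary term and the $|w_t|^{2}$ factor producing, after undoing the substitution (so $w_t=e^{-s\widehat{\beta}}(\psi_t-s\widehat{\beta}_t\psi)$ with $|\widehat{\beta}_t|\lesssim(\xi^{*})^{1+1/N}$), exactly $s^{-1}\lambda^{-1}\int\!\!\int_{\partial\mathcal{F}}e^{-2s\widehat{\beta}}(\xi^{*})^{-1}|\partial_t\psi|^{2}$ plus a $|\psi|$-term on the boundary absorbed by the one just obtained for $N$ large.

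Finally I use $|\nabla\eta|>0$ on $\overline{\mathcal{F}\setminus\mathcal{O}_\eta}$ to replace $|\nabla\eta|^{2},|\nabla\eta|^{4}$ by positive constants at the price of adding $s\lambda^{2}\int\!\!\int_{\mathcal{O}_\eta}e^{-2s\beta}\xi|\nabla w|^{2}$ and $s^{3}\lambda^{4}\int\!\!\int_{\mathcal{O}_\eta}e^{-2s\beta}\xi^{3}|w|^{2}$ on the right, then absorb the remainder $Rw$ and the leftover of $\|M_1w\|^{2}+\|M_2w\|^{2}$ by choosing $\lambda\ge\lambda_1(\mathcal{F},\mathcal{O}_\eta)$ and $s\ge\lambda_1(T^{N}+T^{2N})$ large, and undo the change $w=e^{-s\beta}\psi$ (using $|\nabla w|^{2}\le Ce^{-2s\beta}(|\nabla\psi|^{2}+s^{2}\lambda^{2}\xi^{2}|\psi|^{2})$ and conversely, the extra $s^{2}\lambda^{2}\xi^{2}|\psi|^{2}$ being swallowed by $s^{3}\lambda^{4}\xi^{3}|\psi|^{2}$); the interior term $\int\!\!\int e^{-2s\beta}|f|^{2}$ is obtained simply by bounding $e^{-2s\beta}\lesssim e^{-2s\beta}$ (no weight loss is needed). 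This yields \eqref{car}. I expect the main obstacle to be precisely this boundary bookkeeping in the cross term $(M_1w,M_2w)$: deciding which of the numerous surface integrals keep a favorable sign thanks to $\nabla\eta\cdot n<0$, how to split off and control the normal derivative, and checking that the genuinely uncontrollable pieces come out with exactly the weights $s\lambda\,e^{-2s\widehat{\beta}}\xi^{*}$ on $|\nabla\psi\,\tau|^{2}$ and $s^{-1}\lambda^{-1}e^{-2s\widehat{\beta}}(\xi^{*})^{-1}$ on $|\partial_t\psi|^{2}$; the interior computation is the standard heat Carleman estimate and requires no new idea.
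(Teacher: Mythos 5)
Your proposal follows essentially the same route as the paper's proof: conjugate with $w=e^{-s\beta}\psi$, split the conjugated operator into a symmetric and an antisymmetric part (the paper's $Pw$, $Qw$, with the $s\partial_t\beta\,w$ and other zeroth-order pieces relegated to the remainder rather than to $M_1$, an immaterial difference), expand the cross term, keep all surface integrals, use $\nabla\eta\cdot n<0$ and $\eta=0$ on $\partial\mathcal{F}$ to retain the positive $s^3\lambda^3(\xi^*)^3|\psi|^2$ and normal-derivative boundary terms, send the tangential-gradient and $\partial_t\psi$ boundary pieces to the right with exactly the weights in \eqref{car} via Young's inequality and $|\partial_t\widehat{\beta}|\lesssim (T+T^2)(\xi^*)^{1+1/N}$, and finish with the $|\nabla\eta|>0$ localization and large $s,\lambda$. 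The plan is correct and matches the paper's argument.
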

\begin{proof}
For the sake of simplicity, we take $\nu=1$. We set $w=e^{-s\beta}\psi$. The equation \eqref{heat} becomes
\begin{equation*}
-s\partial_t\beta w-\partial_t w-\Delta w-s^2\lambda^2\xi^2\left|\nabla \eta\right|^2w+2s\lambda \xi (\nabla \eta\cdot\nabla)w+s\lambda^2\xi\left|\nabla\eta\right|^2w+s\lambda \xi \Delta \eta w =e^{-s\beta}f,
\end{equation*} 
which is equivalent to	
\begin{equation}
\label{Ceq1}
Pw+Qw=e^{-s\beta}f+s\lambda^2\xi\left|\nabla\eta\right|^2w-s\lambda \xi \Delta \eta w	+s\partial_t\beta w,
\end{equation}
where 
\begin{equation}
\label{Pw}
Pw=-\Delta w-s^2\lambda^2\xi^2\left|\nabla \eta\right|^2w,
\end{equation}
and 
\begin{equation}
\label{Qw}
Qw=2s\lambda \xi \nabla w \nabla\eta+2s\lambda^2\xi\left|\nabla\eta\right|^2w-\partial_t w.
\end{equation}		
We multiply \eqref{Ceq1} by itself, we have
\begin{multline*}
\int_0^T\left\|Pw\right\|^2_{[L^2(\mathcal{F})]^2} \ dt+\int_0^T\left\|Qw\right\|^2_{[L^2(\mathcal{F})]^2} \ dt +2\int_0^T\sum_{i}^{4}\sum_{j}^2\left\langle(Pw)_i, (Qw)_j \right\rangle_{[L^2(\mathcal{F})]^2} \ dt\\  \leq C\left(\int_0^T\int_\mathcal{F} e^{-2s\beta}\left| f\right|^2 \ dy dt +s^2\lambda^4 \int_0^T\int_\mathcal{F} \left| w\right|^2 \ dy dt\right) .
\end{multline*}  
Thus, it suffices to treat $\int_0^T\left\langle (Pw)_i,(Qw_j)\right\rangle_{[L^2(\mathcal{F})]^2} \ dt $ since
$$
2\int_0^T\sum_{i}^{2}\sum_{j}^2\left\langle(Pw)_i, (Qw)_j \right\rangle_{[L^2(\mathcal{F})]^2} \ dt  \leq C\left( \int_0^T\int_\mathcal{F} e^{-2s\beta}\left| f\right|^2 \ dy dt +s^2\lambda^4 \int_0^T\int_\mathcal{F} \left| w\right|^2 \ dy dt\right).
$$
Here, we denote by $(Pw)_i$ and $(Qw_j)$, the $i$ th and the $j$ th term in the expression \eqref{Pw} and \eqref{Qw} respectively. First, we need to obtain an inequality with two global terms of $\left|w\right|^2$ and $\left| \nabla w\right|^2 $ on the left hand side namely
$$
s\lambda^2\int_0^T\int_{\mathcal{F}}\xi\left|\nabla w\right|^2  \ dy dt,\quad s^3\lambda^4\int_0^T\int_{\mathcal{F}}\xi^3\left|w\right|^2 \ dy dt,
$$
and associate local terms on the right hand side.
First, we have
\begin{multline*}
\int_0^T\left\langle (Pw)_1,(Qw)_1\right\rangle_{[L^2(\mathcal{F})]^2} \ dt=-2s\lambda\int_0^T\int_{\mathcal{F}}\xi \Delta w \cdot \nabla w\nabla \eta\ dy dt\\=-2s\lambda\sum_{i,j,k}\int_0^T\int_{\mathcal{F}}\xi\partial_j\eta \partial_j w_i \partial^2_{kk}w_i\ dydt=2s\lambda^2\sum_{i}\int_0^T\int_{\mathcal{F}}\xi\left|\nabla \eta\cdot\nabla w_i \right|^2  dydt\\+2s\lambda\sum_{i,j,k}\int_0^T\int_{\mathcal{F}}\xi\partial^2_{jk}\eta\partial_jw_i \partial_kw_i dydt+2s\lambda\sum_{i,j,k}\int_0^T\int_{\mathcal{F}}\xi\partial_{j}\eta \partial^2_{kj}w_i  \partial_kw_i \ dy dt\\-2s\lambda\sum_{i,j,k}\int_0^T\int_{\partial\mathcal{F}}\xi^*\partial_{j}\eta \partial_jw_i  \partial_kw_in_k \ d\Gamma dt.
\end{multline*}
On the other hand, we have
\begin{multline*}
2s\lambda\int_0^T\int_{\mathcal{F}}\xi\partial_{j}\eta \ \partial^2_{kj}w_i \ \partial_kw_i \ dydt=2s\lambda\int_0^T\int_{\partial\mathcal{F}}\xi^*\partial_j\eta  \partial_kw_i  \partial_kw_i n_j d\Gamma dt-2s\lambda^2\int_0^T\int_{\mathcal{F}}\xi\left|\nabla \eta \right|^2 \left| \nabla w_i\right|^2 \ dydt\\-2s\lambda\int_0^T\int_{\mathcal{F}}\xi\Delta\eta \left| \nabla w_i\right| ^2 \ dydt-2s\lambda\int_0^T\int_{\mathcal{F}}\xi\partial_{j}\eta \ \partial^2_{kj}w_i  \partial_kw_i \ dydt.
\end{multline*}	
It follows that
\begin{multline*}
2s\lambda\int_0^T\int_{\mathcal{F}}\xi\partial_{j}\eta \ \partial^2_{kj}w_i \ \partial_kw_i \ dydt=s\lambda\int_0^T\int_{\partial\mathcal{F}}\xi^*\partial_j\eta \ \partial_kw_i \ \partial_kw_i\ n_j d\Gamma dt-s\lambda^2\int_0^T\int_{\mathcal{F}}\xi\left|\nabla \eta \right|^2 \left| \nabla w_i\right|^2 \ dydt\\-s\lambda\int_0^T\int_{\mathcal{F}}\xi\Delta\eta \ \left| \nabla w_i\right| ^2 \ dydt=s\lambda\int_0^T\int_{\partial\mathcal{F}}\xi^*(\nabla\eta\cdot n)\left|\nabla w_i\right|^2  d\Gamma dt-s\lambda^2\int_0^T\int_{\mathcal{F}}\xi\left|\nabla \eta \right|^2 \left| \nabla w_i\right|^2 \ dy dt\\-s\lambda\int_0^T\int_{\mathcal{F}}\xi\Delta\eta \ \left| \nabla w_i\right| ^2 \ dydt.
\end{multline*}		
Then
\begin{multline*}
\int_0^T\left\langle (Pw)_1,(Qw)_1\right\rangle_{[L^2(\mathcal{F})]^2} \ dt\geq -s\lambda^2\int_0^T\int_{\mathcal{F}}\xi\left|\nabla \eta \right|^2 \left|\nabla w\right|^2 \ d\Gamma dt- Cs\lambda\int_0^T\int_{\mathcal{F}}\xi\left|\nabla w\right| ^2 dydt \\+s\lambda\sum_{i}\int_0^T\int_{\partial\mathcal{F}}\xi^*(\nabla\eta\cdot n)\left|\nabla w_i\right|^2  d\Gamma dt -2s\lambda\sum_{i}\int_0^T\int_{\partial\mathcal{F}}\xi^*\left|\nabla w_i\cdot n  \right|^2( \nabla\eta\cdot n ) \ d\Gamma dt . 
\end{multline*}
Using that 
$
\left|\nabla w_i\right|^2=\left|\nabla w_i\cdot n\right|^2+\left|\nabla w_i\cdot \tau\right|^2,   
$
we get
\begin{multline}
\label{p1q1}
\int_0^T	\left\langle (Pw)_1,(Qw)_1\right\rangle_{[L^2(\mathcal{F})]^2} \ dt\geq -s\lambda^2\int_0^T\int_{\mathcal{F}}\xi\left|\nabla \eta \right|^2 \left|\nabla w\right|^2 \ d\Gamma dt- Cs\lambda\int_0^T\int_{\mathcal{F}}\xi\left|\nabla w\right| ^2 dydt \\-Cs\lambda\int_0^T\int_{\partial\mathcal{F}}\xi^*\left|\nabla w\tau\right|^2  d\Gamma dt -s\lambda\int_0^T\int_{\partial\mathcal{F}}\xi^*\left|\nabla w n  \right|^2( \nabla\eta\cdot n ) \ d\Gamma dt  . 
\end{multline}
We have also
\begin{multline*}
\label{Ceq2}
\int_0^T\left\langle (Pw)_1,(Qw)_2\right\rangle_{[L^2(\mathcal{F})]^2}  dt=-2s\lambda^2 \int_0^T\int_{\mathcal{F}}\xi \left| \nabla\eta\right|^2 \Delta w \cdot w \ dydt = 2s\lambda^2\sum_{i}^3\int_0^T\int_{\mathcal{F}}\xi \left|\nabla \eta\right|^2 \left| \nabla w_i\right|^2  \ dy dt\\+4s\lambda^2\sum_{i,j}^3\int_0^T\int_{\mathcal{F}}\xi \  \partial_i\eta \ \partial^2_{ij}\eta \ w\cdot \ \partial_jw \ dydt +2s\lambda^3\sum_{i}^3\int_0^T\int_{\mathcal{F}}\xi \left|\nabla \eta \right|^2(\nabla\eta\cdot \nabla w_i)w_i \ dydt\\-2s\lambda^2\sum_{i,j}^3\int_0^T\int_{\partial\mathcal{F}}\xi^* \left| \nabla \eta\right|^2 w_i\partial_jw_in_j\ d\Gamma dt .   
\end{multline*}
Using the Cauchy inequality, we get
\begin{equation*}
2s\lambda^3\sum_{i}^3\int_0^T\int_{\mathcal{F}}\xi \left|\nabla \eta \right|^2(\nabla\eta\cdot \nabla w_i)w_i \ dy dt\geq -Cs\lambda^4 \int_0^T\int_{\mathcal{F}} \xi\left|w\right|^2 \  dydt -\varepsilon s\lambda^2\int_0^T\int_{\mathcal{F}}\xi\left| \nabla \eta \right|^2\left| \nabla w\right|^2 \ dydt,   
\end{equation*}
for $\varepsilon >0$, and 
\begin{equation*}
4s\lambda^2\sum_{i,j}^3\int_0^T\int_{\mathcal{F}}\xi \  \partial_i\eta \partial^2_{ij}\eta w_i \partial_jw_i \ dy dt\geq -Cs\lambda^4 \int_0^T\int_{\mathcal{F}} \xi\left|w\right|^2 \  dy dt -\varepsilon s\lambda^2\int_0^T\int_{\mathcal{F}}\xi\left| \nabla \eta \right|^2\left| \nabla w\right|^2 \ dy dt.   
\end{equation*}
We obtain
\begin{multline}
\label{P1Q}
\int_0^T\left\langle (Pw)_1,(Qw)_2\right\rangle_{[L^2(\mathcal{F})]^2} \ dt\geq s\lambda^2\int_0^T\int_{\mathcal{F}}\xi \left| \nabla\eta\right|^2 \left|\nabla w\right|^2 \ dy dt\\-C\bigg(  s^2\lambda^4\int_0^T\int_{\mathcal{F}} \xi \left|w \right|^2 \ dy dt+ s\lambda \int_0^T\int_{\mathcal{F}} \xi \left|\nabla w \right|^2 \ dy dt+ \varepsilon s\lambda\int_0^T\int_{\partial\mathcal{F}}\xi^*\left|  \nabla wn\right|^2   \ d\Gamma dt\\+s\lambda^3\int_0^T\int_{\partial\mathcal{F}}\xi^*\left|  w\right|^2 \ d\Gamma dt\bigg) ,
\end{multline}
where $s>1$ and $\lambda>1$. 
Using the inequality in \cite[Theorem II.4.1]{galdi}, we have
$$
s\lambda^3\int_{\partial\mathcal{F}}\xi^*\left|w\right|^2 \ dy dt\leq \varepsilon s\lambda^2 \int_\mathcal{F} \xi^*\left|\nabla w\right|^2 \ dy dt+Cs\lambda^4\int_\mathcal{F}\xi^* \left|w\right|^2 \ dy dt.
$$	
We have
\begin{multline}
\label{P1Q3}
\int_0^T\left\langle (Pw)_1,(Qw)_3\right\rangle_{[L^2(\mathcal{F})]^2} \ dt=\int_0^T\int_\mathcal{F}\Delta w\cdot\partial_t w \ dy dt=-\int_0^T\int_\mathcal{F} \nabla w:\partial_t \nabla w \ dy dt\\+\int_0^T\int_{\partial\mathcal{F}} \nabla w n\cdot \partial_t w \ d\Gamma dt=-\frac{1}{2}\int
_0^T \frac{d}{dt}\int_\mathcal{F}\left|\nabla w\right|^2 \ dy dt+\int_0^T\int_{\partial\mathcal{F}} \nabla w n\cdot \partial_t w \ d\Gamma dt\\=\int_0^T\int_{\partial\mathcal{F}} \nabla w n\cdot \partial_t w \ d\Gamma dt,
\end{multline}
where we have used that $\nabla w(0,\cdot)=\nabla w(T,\cdot)=0$.
Applying again the Green formula, we get:
\begin{multline*}
\int_0^T\left\langle (Pw)_2,(Qw)_1\right\rangle_{[L^2(\mathcal{F})]^2} \ dt=-2s^3\lambda^3 \int_0^T\int_{\mathcal{F}}\xi^3 \left|\nabla \eta \right|^2 \nabla w\nabla \eta\cdot w \ dy dt\\=3s^3\lambda^4\int_0^T\int_{\mathcal{F}}\left|\nabla \eta\right|^4\xi^3\left|w\right|^2 \ dy dt+ s^3\lambda^3\int_0^T\int_{\mathcal{F}}\Delta \eta\left|\nabla \eta\right|^2\xi^3\left|w\right|^2 \ dy dt\\+2s^3\lambda^3\int_0^T\int_{\mathcal{F}}\partial_i\eta \  \partial_{i,j}^2\eta \ \partial_j\eta \ \xi^3 \left|w\right|^2 \ dy dt-s^3\lambda ^3\int_0^T\int_{\partial\mathcal{F}} (\xi^*)^3\left|\nabla \eta\right|^2(\nabla\eta\cdot n)\left|w\right|^2 \ d\Gamma dt  . 
\end{multline*}
It follows that
\begin{multline*}
\int_0^T	\left\langle (Pw)_2,(Qw)_1\right\rangle_{[L^2(\mathcal{F})]^2} \ dt\geq 3s^3\lambda^4\int_0^T\int_{\mathcal{F}}\left|\nabla \eta\right|^4\xi^3\left|w\right|^2 \ dydt-C s^3\lambda^3\int_0^T\int_{\mathcal{F}}\xi^3\left|w\right|^2 \ dydt\\-s^3\lambda ^3\int_0^T\int_{\partial\mathcal{F}} (\xi^*)^3\left|\nabla \eta\right|^2(\nabla\eta\cdot n)\left|w\right|^2 \ d\Gamma dt  . 
\end{multline*}
Thus
\begin{multline*}
\int_0^T	\left\langle (Pw)_2,(Qw)_1\right\rangle_{[L^2(\mathcal{F})]^2} \ dt\geq 3s^3\lambda^4\int_0^T\int_{\mathcal{F}}\left|\nabla \eta\right|^4\xi^3\left|w\right|^2 \ dydt-C s^3\lambda^3\int_0^T\int_{\mathcal{F}}\xi^3\left|w\right|^2 \ dydt\\-s^3\lambda ^3\int_0^T\int_{\partial\mathcal{F}} (\xi^*)^3\left|\nabla \eta\right|^2(\nabla\eta\cdot n)\left|w\right|^2 \ d\Gamma dt . 
\end{multline*}
We have 
\begin{equation*}
\int_0^T\left\langle (Pw)_2,(Qw)_2\right\rangle_{[L^2(\mathcal{F})]^2} \ dt=-2s^3\lambda^4\int_0^T\int_{\mathcal{F}}\left|\nabla \eta\right|^4\xi^3\left|w\right|^2 \ dy dt.
\end{equation*}
We obtain also
\begin{multline*}
\int_0^T\left\langle (Pw)_2,(Qw)_3\right\rangle_{[L^2(\mathcal{F})]^2} \ dt=s^2\lambda^2\int_0^T\int_{\mathcal{F}}\left|\nabla \eta\right|^2\xi^2w\cdot\partial_t w \ dy dt\\=\frac{1}{2}s^2\lambda^2\int_0^T\int_{\mathcal{F}}\left|\nabla \eta\right|^2\xi^2\partial_t\left|w\right|^2  \ dy dt=-\frac{1}{2}s^2\lambda^2\int_0^T\int_{\mathcal{F}}\partial_t(\left|\nabla \eta\right|^2\xi^2)\left|w\right|^2  \ dy dt\\
=-s^2\lambda^2\int_0^T\int_{\mathcal{F}}\left|\nabla \eta\right|^2\xi\partial_t\xi\left|w\right|^2  \ dy dt,
\end{multline*}
thanks to the fact that $w(0,\cdot)=w(T,\cdot)=0$.
Thus, we have
\begin{multline}
\label{P2Q}
\int_0^T	\left\langle (Pw)_2,Qw\right\rangle_{[L^2(\mathcal{F})]^2} \ dt\geq s^3\lambda^4\int_0^T\int_{\mathcal{F}}\left|\nabla \eta\right|^4\xi^3\left|w\right|^2 \ dydt-Cs^3\lambda^3\int_0^T\int_{\mathcal{F}}\xi^3\left| w\right|^2 \ dydt\\-s^3\lambda ^3\int_0^T\int_{\partial\mathcal{F}} (\xi^*)^3\left|\nabla \eta\right|^2(\nabla\eta\cdot n)\left|w\right|^2 \ d\Gamma dt .
\end{multline}
	
Thus, combining \eqref{p1q1}, \eqref{P1Q}, \eqref{P2Q}, we get 
\begin{multline*}
s\lambda^2\int_0^T\int_{\mathcal{F}}\xi\left| \nabla\eta\right|^2 \left|\nabla w\right|^2  \ dy dt+s^3\lambda^4\int_0^T\int_{\mathcal{F}}\left|\nabla \eta\right|^4\xi^3\left|w\right|^2 \ dy dt +s^3\lambda^3\int_0^T\int_{\partial\mathcal{F}}(\xi^*)^3\left|w\right|^2 \ d\Gamma dt \\+s\lambda\int_0^T\int_{\partial\mathcal{F}}\left|\nabla w n \right|^2  \ d\Gamma dt \leq C\bigg( (s^3\lambda^3 +s^2\lambda^4)\int_0^T\int_{\mathcal{F}} \xi^3\left|w \right|^2 \ dy dt \\+s\lambda\int_0^T\int_{\mathcal{F}} \xi \left|\nabla w \right|^2 \ dydt+\int_0^T\int_{\mathcal{F}} e^{-2s\beta}\left| f\right|^2 \ dy dt +s\lambda\int_0^T\int_{\partial\mathcal{F}}e^{-2s\widehat{\beta}}\xi^*\left| \nabla w\tau\right| ^2 d\Gamma dt\\+\varepsilon s\lambda\int_0^T\int_{\partial\mathcal{F}}\xi^*\left|\nabla w n \right|^2  \ d\Gamma dt+s^{-1}\lambda^{-1}\int_0^T\int_{\partial\mathcal{F}}\xi^{-1}\left|\partial_tw\right|^2 \ d\Gamma dt \bigg),
\end{multline*}
for all $\varepsilon>0$. 
Using the fact that $\left| \nabla\eta\right|>0 $ on $\mathcal{F} \backslash \mathcal{O}_\eta$, we obtain from the last inequality
\begin{multline*}
s\lambda^2\int_0^T\int_{\mathcal{F}\backslash \mathcal{O}_\eta}\xi \left|\nabla w\right|^2  \ dy dt+s^3\lambda^4\int_0^T\int_{\mathcal{F}\backslash \mathcal{O}_\eta}\xi^3\left|w\right|^2 \ dy dt +s^3\lambda^3\int_0^T\int_{\partial\mathcal{F}}(\xi^*)^3\left|w\right|^2 \ d\Gamma dt\\+s\lambda\int_0^T\int_{\partial\mathcal{F}}\left|\nabla w n \right|^2  \ d\Gamma dt \leq C\bigg( (s^3\lambda^3 +s^2\lambda^4)\int_0^T\int_{\mathcal{F}} \xi^3\left|w \right|^2 \ dy dt +s\lambda\int_0^T\int_{\mathcal{F}} \xi \left|\nabla w \right|^2 \ dy dt\\+\int_0^T\int_{\mathcal{F}} e^{-2s\beta}\left| f\right|^2 \ dy dt +s\lambda\int_0^T\int_{\partial\mathcal{F}}e^{-2s\widehat{\beta}}\xi^*\left| \nabla w\tau\right| ^2 \ d\Gamma dt+\varepsilon s\lambda\int_0^T\int_{\partial\mathcal{F}}\xi^*\left|\nabla w n \right|^2  \ d\Gamma dt\\+s^{-1}\lambda^{-1}\int_0^T\int_{\partial\mathcal{F}}(\xi^*)^{-1}\left|\partial_tw\right|^2 \ d\Gamma dt \bigg).
\end{multline*}
We add the term 
$$
s\lambda^2\int_0^T\int_{\mathcal{O}_\eta}\xi \left|\nabla w\right|^2  \ dy dt+s^3\lambda^4\int_0^T\int_{\mathcal{O}_\eta}\xi^3\left|w\right|^2 \ dy dt,
$$
on the both sides of the previous equation, we get
\begin{multline*}
s\lambda^2\int_0^T\int_{\mathcal{F}}\xi \left|\nabla w\right|^2  \ dy dt+s^3\lambda^4\int_0^T\int_{\mathcal{F}}\xi^3\left|w\right|^2 \ dy dt +s^3\lambda^3\int_0^T\int_{\partial\mathcal{F}}(\xi^*)^3\left|w\right|^2 \ d\Gamma dt\\+s\lambda\int_0^T\int_{\partial\mathcal{F}}\left|\nabla w n \right|^2  \ d\Gamma dt \leq C\bigg( (s^3\lambda^3 +s^2\lambda^4)\int_0^T\int_{\mathcal{F}} \xi^3\left|w \right|^2 \ dy dt +s\lambda\int_0^T\int_{\mathcal{F}} \xi \left|\nabla w \right|^2 \ dy dt\\s\lambda^2\int_0^T\int_{\mathcal{O}_\eta}\xi \left|\nabla w\right|^2  \ dy dt+s^3\lambda^4\int_0^T\int_{\mathcal{O}_\eta}\xi^3\left|w\right|^2 \ dy dt+\int_0^T\int_{\mathcal{F}} e^{-2s\beta}\left| f\right|^2 \ dy dt\\ +s\lambda\int_0^T\int_{\partial\mathcal{F}}e^{-2s\widehat{\beta}}\xi^*\left| \nabla w\tau\right| ^2 \ d\Gamma dt+\varepsilon s\lambda\int_0^T\int_{\partial\mathcal{F}}\xi^*\left|\nabla w n \right|^2  \ d\Gamma dt\\+s^{-1}\lambda^{-1}\int_0^T\int_{\partial\mathcal{F}}(\xi^*)^{-1}\left|\partial_tw\right|^2 \ d\Gamma dt \bigg).
\end{multline*}
We choose $( s,\lambda)$ sufficiently large to have 
$$
s\lambda^2-Cs\lambda\geq\frac{ s\lambda^2}{2},\quad s^3\lambda^4-C(s^{3}\lambda^{3} +s^2\lambda^4)\geq\frac{s^3\lambda^4}{2},
$$ 
to absorb the first and the second terms of the right hand side of the previous equation, we get
\begin{multline*}
s\lambda^2\int_0^T\int_{\mathcal{F}}\xi \left|\nabla w\right|^2  \ dy  dt+s^3\lambda^4\int_0^T\int_{\mathcal{F}}\xi^3\left|w\right|^2 \ dy dt +s^3\lambda^3\int_0^T\int_{\partial\mathcal{F}}(\xi^*)^3\left|w\right|^2 \ d\Gamma dt \\\leq C\bigg( s^3\lambda^4\int_0^T\int_{\mathcal{O}_\eta} \xi^3\left|w \right|^2 \ dy dt +s\lambda^2\int_0^T\int_{\mathcal{O}_\eta} \xi \left|\nabla w \right|^2 \ dy dt+\int_0^T\int_{\mathcal{F}} e^{-2s\beta}\left| f\right|^2 \ dy dt \\+s\lambda\int_0^T\int_{\partial\mathcal{F}}\xi^*\left| \nabla w\tau\right| ^2 d\Gamma dt+s^{-1}\lambda^{-1}\int_0^T\int_{\partial\mathcal{F}}(\xi^*)^{-1}\left|\partial_tw\right|^2 \ d\Gamma dt\bigg).
\end{multline*}
Coming back to $\psi$, the last inequality writes
\begin{multline*}
s\lambda^2\int_0^T\int_{\mathcal{F}}e^{-2s\beta}\xi \left|\nabla \psi\right|^2  \ dy dt+s^3\lambda^4\int_0^T\int_{\mathcal{F}}e^{-2s\beta}\xi^3\left|\psi\right|^2 \ dy dt +s^3\lambda^3\int_0^T\int_{\partial\mathcal{F}}e^{-2s\widehat{\beta}}(\xi^*)^3\left|\psi\right|^2d\Gamma dt \\\leq C\bigg( s^3\lambda^4\int_0^T\int_{\mathcal{O}_\eta} e^{-2s\beta}\xi^3\left|\psi \right|^2 \ dy dt +s\lambda^2\int_0^T\int_{\mathcal{O}_\eta}e^{-2s\beta} \xi \left|\nabla\psi \right|^2 \ dy dt+\int_0^T\int_{\mathcal{F}} e^{-2s\beta}\left| f\right|^2 \ dy dt \\+s\lambda\int_0^T\int_{\partial\mathcal{F}}e^{-2s\widehat{\beta}}\xi^*\left| \nabla \psi\tau\right| ^2 d\Gamma+s^{-1}\lambda^{-1}\int_0^T\int_{\partial\mathcal{F}}e^{-2s\widehat{\beta}}(\xi^*)^{-1}\left|\partial_t\psi\right|^2 \ d\Gamma dt\bigg),
\end{multline*}
where we have used that
\begin{multline*}
s^{-1}\lambda^{-1}\int_0^T\int_{\partial\mathcal{F}}(\xi^*)^{-1}\left|\partial_tw\right|^2 \ d\Gamma dt\leq C\bigg( s^{-1}\lambda^{-1}\int_0^T\int_{\partial\mathcal{F}}e^{-2s\widehat{\beta}}(\xi^*)^{-1}\left|\partial_t\psi\right|^2 \ d\Gamma dt\\+s^{-1}\lambda^{-1}\int_0^T\int_{\partial\mathcal{F}}\partial_t(e^{-2s\widehat{\beta}})(\xi^*)^{-1}\left|\psi\right|^2 \ d\Gamma dt\bigg),
\end{multline*}
and the fact that $|\partial_t\widehat{\beta}|\leq C(T+T^2 )(\xi^*)^{1+1/N}$. We obtain \eqref{car}.
\end{proof}


\bibliographystyle{plain}
\bibliography{biblio}

\begin{thebibliography}{10}

\bibitem{MR2802919}
Ch\'{e}rif Amrouche and Nour El~Houda Seloula.
\newblock {$L^p$}-theory for vector potentials and {S}obolev's inequalities for
  vector fields.
\newblock {\em C. R. Math. Acad. Sci. Paris}, 349(9-10):529--534, 2011.

\bibitem{MR3261920}
Mehdi Badra and Tak\'{e}o Takahashi.
\newblock Feedback stabilization of a fluid-rigid body interaction system.
\newblock {\em Adv. Differential Equations}, 19(11-12):1137--1184, 2014.

\bibitem{MR3181675}
Mehdi Badra and Tak\'{e}o Takahashi.
\newblock Feedback stabilization of a simplified 1d fluid-particle system.
\newblock {\em Ann. Inst. H. Poincar\'{e} Anal. Non Lin\'{e}aire},
  31(2):369--389, 2014.

\bibitem{BdV}
H.~Beir\~{a}o Da~Veiga.
\newblock Regularity for {S}tokes and generalized {S}tokes systems under
  nonhomogeneous slip-type boundary conditions.
\newblock {\em Adv. Differential Equations}, 9(9-10):1079--1114, 2004.

\bibitem{MR2526402}
M.~Boulakia and S.~Guerrero.
\newblock A regularity result for a solid-fluid system associated to the
  compressible {N}avier-{S}tokes equations.
\newblock {\em Ann. Inst. H. Poincar\'{e} Anal. Non Lin\'{e}aire},
  26(3):777--813, 2009.

\bibitem{MG}
M.~Boulakia and S.~Guerrero.
\newblock Local null controllability of a fluid-solid interaction problem in
  dimension 3.
\newblock {\em J. Eur. Math. Soc. (JEMS)}, 15(3):825--856, 2013.

\bibitem{MO}
Muriel Boulakia and Axel Osses.
\newblock Local null controllability of a two-dimensional fluid-structure
  interaction problem.
\newblock {\em ESAIM Control Optim. Calc. Var.}, 14(1):1--42, 2008.

\bibitem{CDI}
Dongho Chae, O.~Yu. Imanuvilov, and Sang~Moon Kim.
\newblock Exact controllability for semilinear parabolic equations with
  {N}eumann boundary conditions.
\newblock {\em J. Dynam. Control Systems}, 2(4):449--483, 1996.

\bibitem{MR1759801}
Carlos Conca, Jorge San Mart\'{\i}n~H., and Marius Tucsnak.
\newblock Existence of solutions for the equations modelling the motion of a
  rigid body in a viscous fluid.
\newblock {\em Comm. Partial Differential Equations}, 25(5-6):1019--1042, 2000.

\bibitem{MR2503028}
Jean-Michel Coron and Sergio Guerrero.
\newblock Null controllability of the {$N$}-dimensional {S}tokes system with
  {$N-1$} scalar controls.
\newblock {\em J. Differential Equations}, 246(7):2908--2921, 2009.

\bibitem{MR4081730}
Jean-Michel Coron, Fr\'{e}d\'{e}ric Marbach, and Franck Sueur.
\newblock Small-time global exact controllability of the {N}avier-{S}tokes
  equation with {N}avier slip-with-friction boundary conditions.
\newblock {\em J. Eur. Math. Soc. (JEMS)}, 22(5):1625--1673, 2020.

\bibitem{MR1682663}
B.~Desjardins and M.~J. Esteban.
\newblock Existence of weak solutions for the motion of rigid bodies in a
  viscous fluid.
\newblock {\em Arch. Ration. Mech. Anal.}, 146(1):59--71, 1999.

\bibitem{MR1765138}
B.~Desjardins and M.~J. Esteban.
\newblock On weak solutions for fluid-rigid structure interaction: compressible
  and incompressible models.
\newblock {\em Comm. Partial Differential Equations}, 25(7-8):1399--1413, 2000.

\bibitem{MR1387461}
Caroline Fabre and Gilles Lebeau.
\newblock Prolongement unique des solutions de l'equation de {S}tokes.
\newblock {\em Comm. Partial Differential Equations}, 21(3-4):573--596, 1996.

\bibitem{MR2103189}
E.~Fern\'{a}ndez-Cara, S.~Guerrero, O.~Yu. Imanuvilov, and J.-P. Puel.
\newblock Local exact controllability of the {N}avier-{S}tokes system.
\newblock {\em J. Math. Pures Appl. (9)}, 83(12):1501--1542, 2004.

\bibitem{FS}
Enrique Fern\'{a}ndez-Cara and Sergio Guerrero.
\newblock Global {C}arleman inequalities for parabolic systems and applications
  to controllability.
\newblock {\em SIAM J. Control Optim.}, 45(4):1399--1446, 2006.

\bibitem{galdi}
G.~P. Galdi.
\newblock {\em An introduction to the mathematical theory of the
  {N}avier-{S}tokes equations}.
\newblock Springer Monographs in Mathematics. Springer, New York, second
  edition, 2011.
\newblock Steady-state problems.

\bibitem{MR3272367}
David G\'{e}rard-Varet and Matthieu Hillairet.
\newblock Existence of weak solutions up to collision for viscous fluid-solid
  systems with slip.
\newblock {\em Comm. Pure Appl. Math.}, 67(12):2022--2075, 2014.

\bibitem{MR3281946}
David G\'{e}rard-Varet, Matthieu Hillairet, and Chao Wang.
\newblock The influence of boundary conditions on the contact problem in a 3{D}
  {N}avier-{S}tokes flow.
\newblock {\em J. Math. Pures Appl. (9)}, 103(1):1--38, 2015.

\bibitem{MR1763528}
C\'{e}line Grandmont and Yvon Maday.
\newblock Existence for an unsteady fluid-structure interaction problem.
\newblock {\em M2AN Math. Model. Numer. Anal.}, 34(3):609--636, 2000.

\bibitem{GS}
Gerd Grubb and V.~A. Solonnikov.
\newblock Reduction of basic initial-boundary value problems for the {S}tokes
  equation to initial-boundary value problems for parabolic systems of
  pseudodifferential equations.
\newblock {\em Zap. Nauchn. Sem. Leningrad. Otdel. Mat. Inst. Steklov. (LOMI)},
  163(Kraev. Zadachi Mat. Fiz. i Smezhn. Vopr. Teor. Funktsi\u{\i} 19):37--48,
  187, 1987.

\bibitem{MR2224824}
Sergio Guerrero.
\newblock Local exact controllability to the trajectories of the
  {N}avier-{S}tokes system with nonlinear {N}avier-slip boundary conditions.
\newblock {\em ESAIM Control Optim. Calc. Var.}, 12(3):484--544, 2006.

\bibitem{MR3784805}
Sergio Guerrero and Cristhian Montoya.
\newblock Local null controllability of the {$N$}-dimensional {N}avier-{S}tokes
  system with nonlinear {N}avier-slip boundary conditions and {$N-1$} scalar
  controls.
\newblock {\em J. Math. Pures Appl. (9)}, 113:37--69, 2018.

\bibitem{MR1781915}
Max~D. Gunzburger, Hyung-Chun Lee, and Gregory~A. Seregin.
\newblock Global existence of weak solutions for viscous incompressible flows
  around a moving rigid body in three dimensions.
\newblock {\em J. Math. Fluid Mech.}, 2(3):219--266, 2000.

\bibitem{MR2317341}
Oleg Imanuvilov and Tak\'{e}o Takahashi.
\newblock Exact controllability of a fluid-rigid body system.
\newblock {\em J. Math. Pures Appl. (9)}, 87(4):408--437, 2007.

\bibitem{MR1804497}
Oleg~Yu. Imanuvilov.
\newblock Remarks on exact controllability for the {N}avier-{S}tokes equations.
\newblock {\em ESAIM Control Optim. Calc. Var.}, 6:39--72, 2001.

\bibitem{royT}
Arnab Roy and Tak\'{e}o Takahashi.
\newblock Local null controllability of a rigid body moving into a {B}oussinesq
  flow.
\newblock {\em Math. Control Relat. Fields}, 9(4):793--836, 2019.

\bibitem{MR1870954}
Jorge~Alonso San~Mart\'{i}n, Victor Starovoitov, and Marius Tucsnak.
\newblock Global weak solutions for the two-dimensional motion of several rigid
  bodies in an incompressible viscous fluid.
\newblock {\em Arch. Ration. Mech. Anal.}, 161(2):113--147, 2002.

\bibitem{MR2029294}
Tak\'{e}o Takahashi.
\newblock Analysis of strong solutions for the equations modeling the motion of
  a rigid-fluid system in a bounded domain.
\newblock {\em Adv. Differential Equations}, 8(12):1499--1532, 2003.

\bibitem{MR2027754}
Tak\'{e}o Takahashi and Marius Tucsnak.
\newblock Global strong solutions for the two-dimensional motion of an infinite
  cylinder in a viscous fluid.
\newblock {\em J. Math. Fluid Mech.}, 6(1):53--77, 2004.

\bibitem{MR3266142}
Chao Wang.
\newblock Strong solutions for the fluid-solid systems in a 2-{D} domain.
\newblock {\em Asymptot. Anal.}, 89(3-4):263--306, 2014.

\end{thebibliography}
	
\end{document}